\documentclass[fleqn, pdftex]{article} 

\usepackage{amsmath,amsfonts,latexsym,amssymb,amsthm}
\usepackage{tikz}

\title{{\Large Unary interpretability logics for sublogics of the interpretability logic $\mathbf{IL}$}}
\author{Yuya Okawa}
\date{}

\theoremstyle{plain}
\newtheorem{thm}{Theorem}[section]
\newtheorem{lem}[thm]{Lemma}
\newtheorem{prop}[thm]{Proposition}
\newtheorem{cor}[thm]{Corollary}
\newtheorem{fact}[thm]{Fact}
\newtheorem{prob}[thm]{Problem}
\newtheorem{cl}{Claim}

\theoremstyle{definition}
\newtheorem{defn}[thm]{Definition}

\newtheorem{rem}[thm]{Remark}

\newcommand{\PA}{\mathbf{PA}}

\newcommand{\CL}{\mathbf{CL}}
\newcommand{\IL}{\mathbf{IL}}
\newcommand{\ILM}{\mathbf{ILM}}
\newcommand{\ILP}{\mathbf{ILP}}
\newcommand{\ILS}{\mathbf{IL}_{\mathrm{set}}^{-}}
\newcommand{\il}{\mathbf{il}}
\newcommand{\ilm}{\mathbf{ilm}}
\newcommand{\ilp}{\mathbf{ilp}}
\newcommand{\uI}{\mathbf{I}}
\newcommand{\uR}{\mathbf{uR}}
\newcommand{\rank}{\mathrm{rank}}

\newcommand{\G}[1]{\mathbf{L#1}}
\newcommand{\I}[1]{\mathbf{I#1}}
\newcommand{\J}[1]{\mathbf{J#1}}
\newcommand{\R}[1]{\mathbf{R#1}}
\newcommand{\uJ}[1]{\mathbf{j#1}}
\newcommand{\seq}[1]{\langle#1\rangle}

\begin{document}

\maketitle

\begin{abstract}
De Rijke introduced a unary interpretability logic $\il$, and proved that $\il$ is the unary counterpart of the binary interpretability logic $\IL$. 
In this paper, we find the unary counterparts of the sublogics of $\IL$. 
\end{abstract}

\section{Introduction}

In this section,  we denote recursively enumerable consistent theory extending Peano Arithmetic $\PA$ by $T$. 

The language $\mathcal{L}(\Box, \rhd)$ of interpretability logics is obtained by adding the binary modal operator $\rhd$ to the language of usual propositional modal logics. 
The formula $A \rhd B$ is intended to mean ``$T +B$ is interpretable in $T+A$''. 
The interpretability logic $\IL$ is the base logic to investigate the notion of relative interpretability, and arithmetical completeness was investigated for extensions of it. 
The logics $\ILM$ and $\ILP$ are obtained by adding Montagna's principle $\mathbf{M}$: $A \rhd B \to (A \land \Box C) \rhd (B \land \Box C)$ and Persistence principle $\mathbf{P}$: $A \rhd B \to \Box(A \rhd B)$ to $\IL$, respectively. 
It is known that $\ILM$ and $\ILP$ are arithmetically complete with respect to arithmetical interpretation for suitable theories. 

Several logical properties were investigated on the interpretability logics. 
In~\cite{DeJVis91}, de Jongh and Visser proved that the fixed point property (FPP) holds for the logic $\IL$.  Also, Areces, Hoogland, and de Jongh~\cite{AHD01} proved that the Craig interpolation property (CIP) holds for $\IL$ and $\ILP$. 
On the other hand, Ignatiev~\cite{IgnUn} proved that $\ILM$ does not have CIP.

Guaspari~\cite{Gua79} and Lindstr\"om~\cite{Lin79} independently proved that interpretability is equivalent to $\Pi_{1}$-conservativity for extensions of $\PA$. 
This fact is provable in $\PA$, and hence $\ILM$ is also the logic of $\Pi_{1}$-conservativity (See H\'ajek and Montagna~\cite{HajMon90}). 
Moreover, Ignatiev \cite{Ign91} investigated the logic of $\Gamma$-conservativity, and introduced the conservativity logic $\CL$. 
The logic $\CL$ is the base logic to investigate the notion of partial conservativity and is one of sublogics of $\IL$. 

In~\cite{KO20}, Kurahashi and Okawa introduced other sublogics of $\IL$. 
The weakest logic among them is called $\IL^-$ and other sublogics of $\IL$ are obtained by adding several axiom schemata to $\IL^-$. 
Then, Kurahashi and Okawa investigated their soundness and completeness with respect to relational semantics.

In interpretability logics, the binary modal operator $\rhd$ expresses the behavior of pairs ($\varphi, \psi$) of sentences of arithmetic such that $T +\varphi$ is interpretable in $T+\psi$. 
On the other hand, in a study of sentences of arithmetic concerning relative interpretability and partial conservativity, the existence and properties of $\varphi$ satisfying $T + \varphi$ is interpretable in $T$ or $T + \varphi$ is $\Gamma$-conservative over $T$ have been investigated. 
For example, Feferman~\cite{Fef60} proved that $T + \lnot \mathrm{Con}_{T}$ is interpretable in $T$ where $\mathrm{Con}_{T}$ is a sentence of arithmetic intended to mean ``$T$ is consistent". 
Also, Smory\'nski~\cite{Smo80} proved that $T + \mathrm{Con}_{T}$ is $\Sigma_{1}$-conservative over $T$ if and only if $T$ is $\Sigma_{1}$-sound (For more investigations, see Lindstr\"om~\cite{Lin} and H\'ajek and Pudl\'ak~\cite{HP}). 
Thus, it is natural to focus on $\mathcal{L}(\Box, \rhd)$-formulas which are of the form $\top \rhd A$. 
The language $\mathcal{L}(\Box, \uI)$ of unary interpretability logics is obtained by adding the unary modal operator $\uI$ to the language of usual propositional modal logics where $\uI A$ is an abbreviation of $\top \rhd A$. 
De Rijke~\cite{deR92} investigated unary interpretability logics and introduced the logics $\il$, $\ilm$, and $\ilp$. 
He proved that for any $\mathcal{L}(\Box, \uI)$-formula $A$, $\il \vdash A$ if and only if $\IL \vdash A$, and $\ilm$ and $\ilp$ similarly correspond to $\ILM$ and $\ILP$, respectively. 

Some situations are simplified and changed on unary interpretability logics compared with usual interpretability logics. 
It is known that $\ILM \not \subseteq \ILP$ (See~\cite{Vis88}) and $\ILM$ does not have CIP. 
On the other hand, de Rijke proved that $\ilm \subseteq \ilp$ and $\ilm$ has CIP concerning $\mathcal{L}(\Box, \uI)$-formulas.

In this context, we investigate how unary interpretability logics for the sublogics of $\IL$ behave. 
Our main result is the following (See Theorem~\ref{MainTheorem} in detail): For each of the sublogics $L$ of $\IL$, we find the unary interpretability logic $\ell$, and prove that for any $\mathcal{L}(\Box, \uI)$-formula $A$, $L \vdash A \Longleftrightarrow \ell \vdash A$.

In Section~\ref{Sec:Pre}, we introduce some basics of sublogics of $\IL$. 
In Section~\ref{Sec:Uils}, we introduce the unary interpretability logic $\il^-$ which is intended to correspond to $\IL^-$, and introduce several new axiom schemata. 
Then, we state our main theorem and prove half of it. 
In Section~\ref{Sec:MThm}, we prove remaining half of the main theorem. 
In Section~\ref{Sec:ConFu}, we discuss conclusions and future work. 
We also concern FPP and CIP for unary interpretability logics.

\section{Preliminaries}\label{Sec:Pre}

In this section, we introduce the interpretability logic $\IL$ and several sublogics of it. 

The language $\mathcal{L}(\Box)$ of propositional modal logics consists of countably many propositional variables $p$, $q$, $r$,$\ldots$, the logical constant $\bot$, the connective $\to$, and the modal operator $\Box$.
The language $\mathcal{L}(\Box, \rhd)$ of interpretability logics is obtained by adding the binary modal operator $\rhd$ to $\mathcal{L}(\Box)$. 
Then, $\mathcal{L}(\Box, \rhd)$-formulas are given by 
\[
A::= \bot \mid p \mid A \to A \mid \Box A \mid A \rhd A.
\]
Other symbols $\top$, $\lnot$, $\land$, and $\lor$ are defined in the usual way. 
The formula $\Diamond A$ is an abbreviation of $\lnot \Box \lnot A$.  
The binary modal operator $\rhd$ binds stronger than $\to$, but weaker than $\lnot$, $\land$, $\lor$, $\Box$, and $\Diamond$. 

\begin{defn}
The interpretability logic $\IL$ has the following axiom schemata: 
\begin{itemize}
	\item[$\G{1}$:] All tautologies in the language $\mathcal{L}(\Box, \rhd)$;
	\item[$\G{2}$:] $\Box(A \to B) \to (\Box A \to \Box B)$;
	\item[$\G{3}$:] $\Box(\Box A \to A) \to \Box A$;
	\item[$\J{1}$:] $\Box(A \to B) \to A \rhd B$;
	\item[$\J{2}$:] $(A \rhd B) \land (B \rhd C) \to A \rhd C$;
	\item[$\J{3}$:] $(A \rhd C) \land (B \rhd C) \to (A \lor B) \rhd C$;
	\item[$\J{4}$:] $A \rhd B \to (\Diamond A \to \Diamond B)$;
	\item[$\J{5}$:] $\Diamond A \rhd A$.
\end{itemize}
The inference rules of $\IL$ are Modus Ponens $\dfrac{A \quad A \to B}{B}$ and Necessitation $\dfrac{A}{\Box A}$. 
\end{defn}

In~\cite{Ign91}, Ignatiev introduced the conservativity logic $\CL$ which is a sublogic of $\IL$, and is a base logic to investigate the notion of partial conservativity. 

\begin{defn}
The logic $\CL$ is obtained by removing the axiom schema $\J{5}$ from $\IL$. 
\end{defn}

The logics $\IL$ and $\CL$ are complete with respect to Veltman semantics. 
In this paper, we use  the concept of  $\IL^-\!$-frames (Definition~\ref{Def:IL-frame}) which were originally introduced by Visser \cite{Vis88} as Veltman prestructures. De Jongh and Veltman \cite{deJVel90} proved that $\IL$ is sound and complete with respect to a corresponding class of finite $\IL^- \!$-frames. 
Furthermore, Ignatiev \cite{Ign91} proved that $\CL$ is sound and complete with respect to a corresponding class of them. 

\begin{defn}\label{Def:IL-frame}
A triple $(W, R, \{S_{w}\}_{w \in W})$ is called an \textit{$\IL^-\!$-frame} if it satisfies the following conditions: 
\begin{enumerate}
	\item $W \neq \emptyset$; 
	\item $R$ is a transitive and conversely well-founded binary relation on $W$; 
	\item For each $w \in W$, $S_{w} \subseteq R[w] \times W$ where $R[w] = \{x \in W : w {R} x\}$. 
\end{enumerate}
\end{defn}

\begin{defn}
A quadruple $(W, R, \{S_{w}\}_{w \in W}, \Vdash)$ is called an \textit{$\IL^-\!$-model} if $(W, R, \{S_{w}\}_{w \in W})$ is an $\IL^-\!$-frame and $\Vdash$ is a usual satisfaction relation between $W$ and set of all $\mathcal{L}(\Box, \rhd)$-formulas with the following clauses: 
\begin{enumerate}
	\item $w \Vdash \Box A :\Longleftrightarrow (\forall x \in W)(w {R} x \Rightarrow x \Vdash A)$;
	\item $w \Vdash A \rhd B :\Longleftrightarrow (\forall x \in W)\bigl(w {R} x \, \, \& \, \, x \Vdash A \Rightarrow (\exists y \in W)(x {S_{w}} y \, \, \& \, \, y \Vdash B)\bigr)$.
\end{enumerate}
Let $A$ be any $\mathcal{L}(\Box, \rhd)$-formula and let ${\mathcal F}  = (W, R, \{S_{w}\}_{w \in W})$ be any $\IL^-\!$-frame. 
We say that \textit{$A$ is valid in ${\mathcal F}$} if $w \Vdash A$ for any $\IL^-\!$-model $(W, R, \{S_{w}\}_{w \in W}, \Vdash)$ and any $w \in W$. 
\end{defn}

In \cite{KO20}, other sublogics of $\IL$ were introduced, and their completeness with respect to $\IL^-\!$-frames were investigated. 
The logic $\IL^-$ (see Definition~\ref{Def:IL-}) is the weakest logic among these sublogics. $\IL^-$ is sound and complete with respect to the class of all finite $\IL^-$-frames. 

\begin{defn}\label{Def:IL-}
The logic $\mathbf{IL^{-}}$ has the axiom schemata $\G{1}$, $\G{2}$, $\G{3}$, $\J{3}$, and $\J{6}$: $\Box A \leftrightarrow (\lnot A \rhd \bot)$. 
The inference rules of $\IL^-$ are Modus Ponens, Necessitation, $\R{1}$ $\dfrac{A \to B}{(C \rhd A) \to (C \rhd B)}$, and $\R{2}$ $\dfrac{A \to B}{(B \rhd C) \to (A \rhd C)}$. 
\end{defn}
Other logics introduced in \cite{KO20} are obtained by adding several axiom schemata to $\IL^-$.  
Throughout in this paper, we write $L(\Sigma_{1},\ldots, \Sigma_{k})$ as the logic obtained by adding several axiom schemata $\Sigma_{1}$, $\ldots$, $\Sigma_{k}$ to the logic $L$. 
The following axiom schemata $\J{2}_{+}$ and $\J{4}_{+}$ were introduced in \cite{KO20} and \cite{Vis88}, respectively: 

\begin{itemize}
	\item[$\J{2}_{+}$:] $A \rhd (B \lor C) \land B \rhd C \to (A \rhd C)$;
	\item[$\J{4}_{+}$:] $\Box (A \to B) \to (C \rhd A \to C \rhd B)$.
\end{itemize}

Then, the following facts hold (for proofs, see~\cite{KO20}). 

\begin{fact}\label{p1}
\begin{enumerate}
	\item $\mathbf{IL^{-}} \vdash \Box (A \to B) \to (B \rhd C \to A \rhd C)$;
	\item $\mathbf{IL^{-}}(\J{2}_{+}) \vdash \J{2}$;
	\item $\IL^-(\J{2}_{+}) \vdash \J{4}_{+}$;
	\item $\mathbf{IL^{-}}(\J{1}, \J{2}_{+})$ and $\mathbf{IL^{-}}(\J{1}, \J{2})$ are deductively equivalent to $\CL$;
	\item $\mathbf{IL^{-}}(\J{1}, \J{2}_{+}, \J{5})$ and $\mathbf{IL^{-}}(\J{1}, \J{2}, \J{5})$ are deductively equivalent to $\IL$.
\end{enumerate}
\end{fact}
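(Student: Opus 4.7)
My plan is to dispatch the five clauses of Fact~\ref{p1} in the stated order, since each builds on its predecessors. Clause (1) is the core technical lemma, and in $\IL^-$ the only bridge from $\Box$ to $\rhd$ is $\J{6}$. I would therefore rewrite $\Box(A\to B)$ as $(A\land\lnot B)\rhd\bot$, strengthen the consequent from $\bot$ to $C$ via $\R{1}$ applied to the tautology $\bot\to C$, combine the resulting $(A\land\lnot B)\rhd C$ with the hypothesis $B\rhd C$ through $\J{3}$ to obtain $((A\land\lnot B)\lor B)\rhd C$, and finish by $\R{2}$ on the tautology $A\to(A\land\lnot B)\lor B$.

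Clause (2) is immediate once $\R{1}$ weakens $A\rhd B$ to $A\rhd(B\lor C)$, so that $\J{2}_{+}$ applies directly. Clause (3) is subtler: a straightforward substitution into $\J{2}_{+}$ would ask for a side premise $A\rhd B$, which in $\IL^-$ is precisely $\J{1}$ and so is unavailable. I would instead instantiate the inner disjunction of $\J{2}_{+}$ as $(A\land\lnot B)\lor B$, convert $C\rhd A$ into $C\rhd((A\land\lnot B)\lor B)$ via $\R{1}$ using the tautology $A\to(A\land\lnot B)\lor B$, and produce the companion premise $(A\land\lnot B)\rhd B$ from $\Box(A\to B)$ exactly as in the proof of clause (1); then $\J{2}_{+}$ delivers $C\rhd B$.

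For clause (4) I would first observe that inside any logic containing $\J{1}$ and $\J{2}$ the rules $\R{1}$ and $\R{2}$ are derivable: Necessitation followed by $\J{1}$ turns a theorem $A\to B$ into $A\rhd B$, and $\J{2}$ supplies the composition on either side. Hence $\IL^-(\J{1},\J{2})$ contains every axiom and rule of $\CL$. What remains is to reconcile $\J{4}$ and $\J{6}$: to derive $\J{4}$, I would assume $A\rhd B$ and $\Box\lnot B$, rewrite $\Box\lnot B$ as $B\rhd\bot$ by $\J{6}$, compose to $A\rhd\bot$ by $\J{2}$, and apply $\J{6}$ once more to deduce $\Box\lnot A$ against $\Diamond A$; conversely $\CL$ derives $\J{6}$ using $\J{1}$, $\J{4}$ and the standard modal fact $\Diamond\bot\to\bot$, and derives $\J{2}_{+}$ via $\J{1}$ (providing $C\rhd C$), $\J{3}$ and $\J{2}$. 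Clause (5) is then a verbatim copy of clause (4) with $\J{5}$ adjoined to both sides. The step I expect to cost most is clause (3), because the asymmetric shape of $\J{2}_{+}$ hides the correct instantiation until one notices that $\J{6}$ already yields the missing $(A\land\lnot B)\rhd B$ essentially for free.
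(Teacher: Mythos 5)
Your proposal is correct. The paper itself does not prove Fact~\ref{p1} --- it only cites \cite{KO20} for the proofs --- so there is no in-text argument to compare against, but each of your derivations checks out against the axiomatization of $\IL^-$ given in Definition~\ref{Def:IL-}. In particular, the key moves all work: in clause (1), passing from $\Box(A\to B)$ to $(A\land\lnot B)\rhd\bot$ via $\J{6}$ and $\R{2}$, then to $(A\land\lnot B)\rhd C$ via $\R{1}$, and closing with $\J{3}$ and $\R{2}$; in clause (3), your observation that the naive instantiation of $\J{2}_+$ would require $\J{1}$, and your repair via the disjunction $(A\land\lnot B)\lor B$ together with the premise $(A\land\lnot B)\rhd B$ obtained as in clause (1), is exactly the right idea. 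The only blemish is organizational, in clause (4): the derivability of $\R{1}$ and $\R{2}$ from $\J{1}$, $\J{2}$ and Necessitation is what shows that $\CL$ is closed under the rules of $\IL^-$ (one inclusion), whereas your ``Hence $\IL^-(\J{1},\J{2})$ contains every axiom and rule of $\CL$'' concerns the opposite inclusion, which actually rests on your subsequent derivation of $\J{4}$ from $\J{6}$ and $\J{2}$. Both inclusions are in fact established by the material you supply (the $\J{4}$, $\J{6}$, and $\J{2}_+$ derivations are all sound), so this is a misplaced connective rather than a gap.
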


\begin{rem}
From Facts~\ref{p1}.4 and 5, we always identify $\CL$ and $\IL$ with $\mathbf{IL^{-}}(\J{1}, \J{2}_{+})$ and $\mathbf{IL^{-}}(\J{1}, \J{2}_{+}, \J{5})$, respectively. 
\end{rem}

\begin{fact}\label{FC} Let $\mathcal{F} = (W, R, \{S_{w}\}_{w \in W})$ be any $\mathbf{IL}^{-}\!$-frame. 
\begin{enumerate}
	\item $\J{1}$ is valid in $\mathcal{F}$ $\Longleftrightarrow$ $(\forall x, y \in W) (x {R} y \Rightarrow y {S_{x}} y)$;
	\item $\J{2}_{+}$ is valid in $\mathcal{F}$ $\Longleftrightarrow$ $\J{4}_{+}$ is valid in $\mathcal{F}$ and $(\forall w, x, y, z \in W)(x {S_{w}} y \, \, \& \, \, y {S_{w}} z \Rightarrow x {S_{w}} z)$;
	\item $\J{4}_{+}$ is valid in $\mathcal{F}$ $\Longleftrightarrow$ $(\forall x, y, z \in W)(y {S_{x}} z \Rightarrow x {R} z)$;
	\item $\J{5}$ is valid in $\mathcal{F}$ $\Longleftrightarrow$ $(\forall x, y, z\in W) (x {R} y \, \, \& \, \, y {R} z \Rightarrow y {S_{x}} z)$. 
\end{enumerate}
\end{fact}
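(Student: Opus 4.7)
The plan is to treat each of the four items as a standard frame-correspondence equivalence, proving the $(\Leftarrow)$ direction by unfolding the forcing clauses of $\Box$ and $\rhd$ at an arbitrary model point, and the $(\Rightarrow)$ direction by falsifying the axiom in a carefully chosen valuation at specific worlds supplied by the negation of the frame property.

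For item 1, I would handle $(\Leftarrow)$ directly: given $w \Vdash \Box(A \to B)$ and $wRx$ with $x \Vdash A$, use the frame condition with $w$ in the role of $x$ and $x$ in the role of $y$ to get $xS_wx$; the assumption $\Box(A \to B)$ then forces $x \Vdash B$, so $x$ itself is the required successor. For $(\Rightarrow)$, assume $xRy$, set a fresh $p$ to hold exactly at $y$, observe that $x \Vdash \Box(p \to p)$ is trivial, apply $\J{1}$ to get $x \Vdash p \rhd p$, and read off $yS_xy$ from the existential witness. Items 3 and 4 follow the same recipe. For item 4, the $(\Rightarrow)$ direction chooses $p$ true exactly at $z$: from $yRz$ we have $y \Vdash \Diamond p$, and $\J{5}$ then hands us an $S_x$-successor of $y$ which must be $z$. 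For item 3, $(\Leftarrow)$ is an immediate chain (if $u \Vdash C$, any $S_w$-successor $v$ witnesses $A$, and $wRv$ by the frame condition puts $v$ in the scope of $\Box(A \to B)$). The only subtlety is the $(\Rightarrow)$ direction of item 3: assuming $yS_xz$ but $\lnot xRz$, I would set $A$ true exactly at $z$ and $B$ nowhere, so $\Box(A \to B)$ holds at $x$ vacuously (because $z$ lies outside $R[x]$); choosing $C$ true exactly at $y$ yields $x \Vdash C \rhd A$ via the witness $yS_xz$, but $x \not\Vdash C \rhd B$ because no world forces $B$, violating $\J{4}_+$.

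Item 2 is where both directions need more care. For $(\Leftarrow)$, given $w \Vdash A \rhd (B \lor C) \land B \rhd C$ and $u$ with $wRu \Vdash A$, pick $v$ with $uS_wv$ and $v \Vdash B \lor C$; the case $v \Vdash C$ is immediate, and in the case $v \Vdash B$ I would invoke $\J{4}_+$-validity to conclude $wRv$, so that $B \rhd C$ at $w$ supplies $v'$ with $vS_wv'$ and $v' \Vdash C$, at which point $S_w$-transitivity produces $uS_wv'$. For $(\Rightarrow)$, validity of $\J{4}_+$ comes for free by combining soundness of $\IL^-$ with Fact~\ref{p1}.3; transitivity is then forced by a three-variable counter-model: assume $xS_wy$ and $yS_wz$ but $\lnot xS_wz$, set $A, B, C$ true exactly at $x, y, z$ respectively, verify $w \Vdash A \rhd (B \lor C)$ via $xS_wy$ and $w \Vdash B \rhd C$ via $yS_wz$, and note that the only $A$-world below $w$ is $x$, whose $S_w$-successors do not include $z$, so $A \rhd C$ fails.

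The main obstacle I anticipate is the $(\Leftarrow)$ direction of item 2, because it is the one step where the frame condition is genuinely compound: transitivity of $S_w$ alone is too weak, and one must remember to deploy $\J{4}_+$ in order to promote the intermediate witness $v$ into the domain of $\Box$-reasoning at $w$, so that $B \rhd C$ can even be applied. Every other step is essentially a one-variable valuation trick, but this one requires both pieces of the correspondence working in concert.
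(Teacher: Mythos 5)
Your proof is correct: each of the four correspondences is established by the standard unfold-the-clauses argument for the easy direction and a characteristic-valuation counter-model for the hard direction, and the one genuinely compound case (item 2, using $\J{4}_+$-validity to obtain $wRv$ before applying $B \rhd C$, plus transitivity of $S_w$) is handled properly, as is the derivation of $\J{4}_+$-validity from $\J{2}_+$-validity via Fact~\ref{p1}.3 and soundness of $\IL^-$ over arbitrary $\IL^-\!$-frames. The paper itself gives no proof of this Fact but defers to \cite{KO20}, and your argument is exactly the standard correspondence proof one would expect there.
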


\begin{fact}\label{comj15} Let $L$ be any logic appearing in Figure \ref{Fig1}. Then, for any $\mathcal{L}(\Box, \rhd)$-formula $A$, the following are equivalent: 
\begin{enumerate}
	\item $L \vdash A$. 
	\item $A$ is valid in all (finite) $\IL^-\!$-frames in which all axiom schemata of $L$ are valid. 
\end{enumerate}

\begin{figure}[th]
\centering
\begin{tikzpicture}
\node (IL-) at (0,1.5) {$\IL^-$};
\node (IL5) at (3,0) {$\IL^-(\J{5})$};
\node (IL1) at (3,1.5) {$\IL^-(\J{1})$};
\node (IL4) at (3,3){$\IL^-(\J{4}_+)$};
\node (IL15) at (6,0){$\IL^-(\J{1}, \J{5})$};
\node (IL45) at (6,1.5){$\IL^-(\J{4}_+, \J{5})$};
\node (IL14) at (6,3){$\IL^-(\J{1}, \J{4}_+)$};
\node (IL2) at (6,4.5){$\IL^-(\J{2}_+)$};
\node (IL145) at (9,1.5){$\IL^-(\J{1}, \J{4}_+, \J{5})$};
\node (IL25) at (9,3){$\IL^-(\J{2}_+, \J{5})$};
\node (CL) at (9,4.5){$\CL$};
\node (IL) at (12,3){$\IL$};
\draw [-] (IL5)--(IL-);
\draw [-] (IL1)--(IL-);
\draw [-] (IL4)--(IL-);
\draw [-] (IL15)--(IL5);
\draw [-] (IL45)--(IL5);
\draw [-] (IL15)--(IL1);
\draw [-] (IL14)--(IL1);
\draw [-] (IL45)--(IL4);
\draw [-] (IL14)--(IL4);
\draw [-] (IL2)--(IL4);
\draw [-] (IL145)--(IL15);
\draw [-] (IL145)--(IL45);
\draw [-] (IL25)--(IL45);
\draw [-] (IL145)--(IL14);
\draw [-] (CL)--(IL14);
\draw [-] (IL25)--(IL2);
\draw [-] (CL)--(IL2);
\draw [-] (IL)--(IL145);
\draw [-] (IL)--(IL25);
\draw [-] (IL)--(CL);
\end{tikzpicture}
\caption{Sublogics of $\IL$ complete with respect to $\IL^-\!$-frames}
\label{Fig1}
\end{figure}
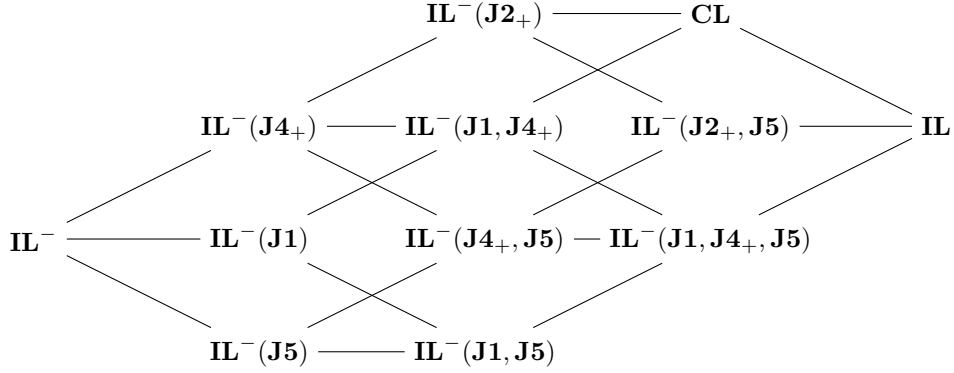
For each of line segments in Figure~\ref{Fig1}, the logic at the left side of the line is a proper sublogic of the logic at the right side of the line. 
\end{fact}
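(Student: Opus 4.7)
The plan is to separate soundness, completeness, and finiteness. For soundness, the axioms $\G{1}$, $\G{2}$, $\G{3}$, $\J{3}$, $\J{6}$ and the rules Modus Ponens, Necessitation, $\R{1}$, $\R{2}$ of the base logic $\IL^-$ preserve validity on every $\IL^-\!$-frame by direct unwinding of the satisfaction clauses, while soundness of the additional schemata $\J{1}$, $\J{2}_{+}$, $\J{4}_{+}$, $\J{5}$ on their respective frame classes is exactly the $(\Leftarrow)$ direction of Fact~\ref{FC}.

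For completeness I would carry out a canonical model construction adapted to the weaker setting of $\IL^-$. Take the worlds to be $L$-maximal consistent sets of $\mathcal{L}(\Box, \rhd)$-formulas, define $w R x$ via the usual containment of $\Box$-boxed formulas in $w$ by $x$ together with a witnessing clause ensuring proper $R$-descent, and define $S_{w}$ to contain only the minimum edges forced by the $\rhd$-formulas true at $w$. The axiom $\J{3}$ and the rules $\R{1}$, $\R{2}$ suffice to prove the existence lemma for $\rhd$, and a standard truth lemma follows by induction on formula complexity. Verifying that the canonical frame validates the axioms of $L$ uses, for each extra schema in $L$, the corresponding frame condition listed in Fact~\ref{FC}: that condition is read off from the $\rhd$-statements belonging to the relevant maximal consistent set by applying the axiom in question.

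The last step is passing to finite frames. Given $L \nvdash A$, I would filtrate the canonical model through the finite set of subformulas of $A$, defining the filtrated $R$ and $S_{[w]}$ as the smallest relations on the quotient for which the truth lemma restricted to subformulas of $A$ still goes through and the frame conditions attached to the axioms of $L$ continue to hold. The main obstacle will be this filtration: one has to close the filtrated $S$-relation under the conditions demanded by $\J{2}_{+}$ (transitivity of each $S_{w}$) and $\J{4}_{+}$ (the $R$-after-$S$ containment) without breaking the truth lemma, and this must be done uniformly across all the logics appearing in Figure~\ref{Fig1}. In practice I would treat each axiom combination as a separate case and engineer the filtrated $S_{[w]}$ to satisfy the minimal closure conditions required by the frame conditions of that particular logic, following the case analysis carried out in~\cite{KO20}.
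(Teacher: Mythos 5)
The paper itself offers no proof of this Fact: it is imported from~\cite{KO20} together with Facts~\ref{p1} and~\ref{FC}, so there is nothing internal to compare your argument against except the closely analogous constructions the paper carries out in Section~\ref{Sec:MThm}. Judged on its own terms, your sketch has a genuine gap at the completeness step. A canonical model whose worlds are plain $L$-maximal consistent sets, with $S_{w}$ taken to be ``the minimum edges forced by the $\rhd$-formulas true at $w$'', does not support the truth lemma for $\rhd$. To refute $\lnot(A\rhd B)\in w$ you need an $R$-successor $x$ containing $A$ all of whose $S_{w}$-successors fail $B$; but the very same maximal consistent set $x$ may also be the witness needed to refute some other $\lnot(A'\rhd B')\in w$, and the single relation $S_{w}$ then receives conflicting demands (the $S_{w}$-successors of $x$ must avoid $B$ for one job and $B'$ for the other, while still supplying the witnesses required by the positive $\rhd$-formulas of $w$). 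This is precisely why the completeness proofs for these logics --- including Theorems~\ref{ilcomp} and~\ref{ilcomp2a5} of this paper --- take worlds to be \emph{pairs} $(\Gamma,B)$ or $(\Gamma,\tau)$, with the second coordinate recording which negated $\rhd$-formula that copy of $\Gamma$ is responsible for; your proposal omits this device, and the ``standard truth lemma'' you invoke is exactly the step that fails without it.

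Two further problems. First, the infinite canonical frame is not an $\IL^-\!$-frame at all: converse well-foundedness of $R$ fails there, just as it does for $\GL$, so one must cut down to a finite adequate set \emph{before} building the model rather than filtrating afterwards; your order of operations starts from an object outside the frame class, and the subsequent filtration of the $S$-relations under the closure conditions for $\J{2}_{+}$ and $\J{4}_{+}$ is exactly the part you defer to case-by-case ``engineering'' without showing it can be done compatibly with the truth lemma. Second, the statement also asserts that every edge in Figure~\ref{Fig1} is a \emph{proper} inclusion; this requires exhibiting, for each edge, a formula valid on one frame class but not the other (or a separating countermodel), and your proposal does not address it at all.
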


However, there are several sublogics of $\IL$ which are incomplete with respect to Veltman semantics. 
On the other hand, these logics are complete with respect to Verbrugge semantics. 
We use the following concept of $\ILS$-frames which is a general concept of \textit{generalized Veltman frame} introduced by Verbrugge~\cite{Ver92} (See also \cite{Vuk96} or \cite{Joo98}). 

\begin{defn}
A triple $(W, R, \{S_{w}\}_{w \in W})$ is called an \textit{$\ILS$-frame} if it satisfies the following conditions: 
\begin{enumerate}
	\item $W \neq \emptyset$; 
	\item $R$ is a transitive and conversely well-founded binary relation on $W$; 
	\item For each $w \in W$, $S_{w} \subseteq R[w] \times ({\mathcal P}(W) \setminus \{\emptyset\})$ where ${\mathcal P}(W)$ is the power set of $W$; 
	\item (Monotonicity) For any $w, x \in W$ and any $V, U \subseteq W$, if $x {S_{w}} V$ and $V \subseteq U$, then $x {S_{w}} U$. 
\end{enumerate}
\end{defn}

As in the definition of $\IL^-\!$-models, we can define $\ILS$-models $(W, R, \{S_{w}\}_{w \in W}, \mathord \Vdash)$ with the following clause:
\begin{itemize}
	\item[$2.$] $w \Vdash A \rhd B :\Longleftrightarrow (\forall x \in W) \Bigl(w {R} x \, \, \& \, \, x \Vdash A \Rightarrow \bigl(\exists V \in ({\mathcal P}(W) \setminus \emptyset)\bigr)\bigl(x {S_{w}} V \, \, \& \, \, (\forall y \in V) (y \Vdash B)\bigr)\Bigr)$
\end{itemize}

Then, the following facts hold (for proofs, see~\cite{KO20}). 

\begin{fact}\label{gFC} Let $\mathcal{F} = (W, R, \{S_{w}\}_{w \in W})$ be any $\ILS$-frame. 
\begin{enumerate}
	\item $\J{1}$ is valid in $\mathcal{F}$ $\Longleftrightarrow$ $(\forall x, y \in W)(x {R} y \Rightarrow y {S_{x}} \{y\})$;
	\item $\J{2}$ is valid in $\mathcal{F}$ $\Longleftrightarrow$ $\J{4}$ is valid in $\mathcal{F}$ and
\[
(\forall w, x \in W) (\forall V \subseteq W)\left(x {S_{w}}V \, \, \& \, \ (\forall y \in V) \left(y {S_{w}} U_{y}\right) \Rightarrow x {S_{w}} \bigcup_{y \in V} U_{y}\right); 
\]
	\item $\J{4}$ is valid in $\mathcal{F}$ $\Longleftrightarrow$ $(\forall x, y \in W) (\forall V \subseteq W)\bigl(y {S_{x}} V \Rightarrow (\exists z \in V)(x {R} z)\bigr)$;
	\item $\J{4}_{+}$ is valid in $\mathcal{F}$ $\Longleftrightarrow$ $(\forall x, y \in W) (\forall V \subseteq W)\bigl(y {S_{x}} V \Rightarrow y {S_{x}}(V \cap R[x])\bigr)$;
	\item $\J{5}$ is valid in $\mathcal{F}$ $\Longleftrightarrow$ $(\forall x, y, z\in W)(x {R} y \, \, \& \, \, y{R}z \Rightarrow y{S_{x}} \{z\})$.
\end{enumerate}
\end{fact}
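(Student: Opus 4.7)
The plan is to prove each of the five correspondences by the two usual directions: soundness, showing that the stated frame condition validates the axiom, and definability, showing that failure of the condition yields an $\ILS$-model falsifying the axiom. In every falsifying construction I would work on the given frame with a valuation that forces each chosen propositional variable on exactly one prescribed set, so that the extension of the relevant subformula is pinned down; monotonicity of $S_w$ together with the constraint $V \neq \emptyset$ then sharply restricts which witnesses $V$ are available.

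Items 1, 3, 4, 5 all follow the same template. For 1, soundness takes a successor $y$ of $x$ with $y \Vdash A$ and combines $yS_x\{y\}$ with $y \Vdash A \to B$ (from $\Box(A \to B)$) to witness $x \Vdash A \rhd B$; definability uses a variable $p$ true only at the offending $y$ so that any candidate $V$ for $p \rhd p$ must lie in $\{y\}$ and hence equal $\{y\}$. For 3, soundness is immediate because the witness $V$ provided by $x \Vdash A \rhd B$ has, by the condition, some member in $R[w]$, yielding $w \Vdash \Diamond B$; definability puts $p$ true exactly at $y$ and $q$ true exactly on $V$ to falsify $p \rhd q \to (\Diamond p \to \Diamond q)$. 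For 5, soundness uses $yS_x\{z\}$ to witness $\Diamond A \rhd A$, and definability falsifies it with $p$ true only at $z$. For 4, soundness relies on $yS_w(V \cap R[w])$, noting crucially that this atomic fact implicitly guarantees $V \cap R[w] \neq \emptyset$ since $S_w$ excludes the empty set; $\Box(A \to B)$ then forces every element of $V \cap R[w]$ to satisfy $B$. Definability is obtained by placing $r$ true only at $y$ and $p$ true on $V$, and choosing $q$ so that $\Box(p \to q)$ holds while any witness for $r \rhd q$ is forced to involve $V \cap R[x]$.

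The main obstacle is item 2. One direction already absorbs $\J{4}$: since $\J{2}$ implies $\J{4}$ in $\IL^-$, validity of $\J{2}$ on $\mathcal F$ delivers the condition of item 3 for free. For the union-closure clause one assumes, towards a contradiction, that $xS_wV$ and $yS_wU_y$ for each $y \in V$ yet not $xS_w \bigcup_{y \in V} U_y$; a valuation making a variable $p$ hold exactly at $x$, $q$ exactly on $V$, and $r$ exactly on $\bigcup_{y \in V} U_y$ then turns this into a failure of $(p \rhd q) \land (q \rhd r) \to p \rhd r$ at $w$. The reverse direction takes $w \Vdash (A \rhd B) \land (B \rhd C)$ and an $R$-successor $x$ of $w$ with $x \Vdash A$, extracts $V$ from the first conjunct, and then legally applies the second conjunct to each $y \in V$ using that $wRy$, which is exactly where the $\J{4}$ clause is needed; the union-closure clause then assembles $\bigcup_{y \in V} U_y$ into the required witness, and its nonemptiness follows from nonemptiness of $V$ and of each $U_y$. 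The subtle point motivating the form of the stated equivalence is precisely that without the $\J{4}$ condition one cannot feed members of $V$ into the hypothesis $w \Vdash B \rhd C$.
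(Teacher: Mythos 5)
The paper itself gives no proof of this Fact --- it is quoted from \cite{KO20} --- so there is no in-text argument to compare yours against; I therefore assess your proposal on its own terms. Items 1, 3, 4 and 5, and the forward direction of item 2, are correct: the soundness halves are routine, and your falsifying valuations, combined with the observations that every witness $V$ is nonempty and that monotonicity of $S_w$ pins down the minimal available witness, do exactly what is needed. The derivation of the $\J{4}$ condition from validity of $\J{2}$ via $\IL^-(\J{2}) \vdash \J{4}$ and soundness is also fine.

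The backward direction of item 2 contains a genuine gap. You write that, having extracted $V$ with $x S_w V$ from $w \Vdash A \rhd B$, you ``legally apply the second conjunct to each $y \in V$ using that $wRy$, which is exactly where the $\J{4}$ clause is needed.'' But in this paper's $\ILS$-frames the second component of $S_w$ is an arbitrary nonempty subset of $W$, not of $R[w]$, and the frame condition for $\J{4}$ only guarantees that \emph{some} $z \in V$ satisfies $wRz$, not that every $y \in V$ does. For $y \in V \setminus R[w]$ you cannot invoke $w \Vdash B \rhd C$, so you never obtain the full family $(U_y)_{y \in V}$ that the displayed union-closure clause demands; nor can you first shrink the witness to $V \cap R[w]$, since $x S_w (V \cap R[w])$ is precisely the $\J{4}_+$ condition, which is not available (indeed $\IL^-(\J{2})$ does not prove $\J{4}_+$ by Figure~\ref{Fig2}). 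The difficulty is not cosmetic: if $x S_w V$ where $V$ contains a world $y \notin R[w]$, then $y$ bears no $S_w$-relation to anything, the hypothesis $(\forall y \in V)(y S_w U_y)$ is unsatisfiable, and the clause holds vacuously for that $V$ while $\J{2}$ can still fail at $w$. A correct argument must run the entire construction relative to $V \cap R[w]$ (nonempty by the $\J{4}$ clause), i.e.\ it needs the closure condition in the form ``$x S_w V$ and $y S_w U_y$ for every $y \in V \cap R[w]$ imply $x S_w \bigcup_{y \in V \cap R[w]} U_y$''; with that reading, both directions of your item 2 go through essentially as you describe.
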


\begin{fact}\label{gcomp} 
Let $L$ be any logic appearing in Figure \ref{Fig2}. 
Then, for any $\mathcal{L}(\Box, \rhd)$-formula $A$, the following are equivalent: 
\begin{enumerate}
	\item $L \vdash A$. 
	\item $A$ is valid in all (finite) $\ILS$-frames where all axiom schemata of $L$ are valid. 
\end{enumerate}

\begin{figure}[h]
\centering
\begin{tikzpicture}
\node (IL4) at (0,1.5) {$\IL^-(\J{4})$};
\node (IL14) at (3,0) {$\IL^-(\J{1}, \J{4})$};
\node (IL45) at (3,1.5) {$\IL^-(\J{4}, \J{5})$};
\node (IL2) at (3,3){$\IL^-(\J{2})$};
\node (IL145) at (6,0){$\IL^-(\J{1}, \J{4}, \J{5})$};
\node (IL25) at (6,1.5){$\IL^-(\J{2}, \J{5})$};
\node (IL24) at (6,3){$\IL^-(\J{2}, \J{4}_+)$};
\node (IL245) at (9,1.5){$\IL^-(\J{2}, \J{4}_+, \J{5})$};
\draw [-] (IL14)--(IL4);
\draw [-] (IL45)--(IL4);
\draw [-] (IL2)--(IL4);
\draw [-] (IL145)--(IL14);
\draw [-] (IL145)--(IL45);
\draw [-] (IL25)--(IL45);
\draw [-] (IL25)--(IL2);
\draw [-] (IL24)--(IL2);
\draw [-] (IL245)--(IL25);
\draw [-] (IL245)--(IL24);
\end{tikzpicture}
\caption{Sublogics of $\IL$ complete with respect to $\ILS$-frames}
\label{Fig2}
\end{figure}
The meaning of line segments in Figure~\ref{Fig2} is the same as in Figure~\ref{Fig1}. 
\end{fact}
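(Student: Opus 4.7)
The plan is to prove the two directions separately. For soundness $(1 \Rightarrow 2)$, I would argue by induction on the length of an $L$-proof of $A$, verifying that the base axioms of $\IL^-$ (namely $\G{1}$, $\G{2}$, $\G{3}$, $\J{3}$, $\J{6}$) are validated on every $\ILS$-frame using only the converse well-foundedness and transitivity of $R$ together with monotonicity of the $S_w$, and that each of the additional axioms $\J{1}$, $\J{2}$, $\J{4}$, $\J{4}_+$, $\J{5}$ appearing in some $L$ in Figure~\ref{Fig2} is validated by the corresponding frame condition listed in Fact~\ref{gFC}. The rules Modus Ponens, Necessitation, $\R{1}$, and $\R{2}$ clearly preserve validity on any $\ILS$-frame: if $A \to B$ is valid then every bundle of $A$-worlds is also a bundle of $B$-worlds, which handles $\R{1}$, and every $A$-world is also a $B$-world, which handles $\R{2}$.

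For completeness $(2 \Rightarrow 1)$, I would adapt the step-by-step Henkin-style construction that is standard for interpretability logics to the generalized setting. Given $L \not\vdash A$, I would first fix a finite adequate set $\Phi$ of formulas containing $A$, closed under subformulas and single negations. Worlds would be $\Phi$-maximal $L$-consistent sets; $R$ would be defined in the usual way so that, whenever $\Box B \in w$, both $B$ and $\Box B$ lie in each $R$-successor, with a rank-by-$\Box$-formulas argument delivering converse well-foundedness from finiteness of $\Phi$. The delicate step is defining $S_w \subseteq R[w] \times (\mathcal{P}(W) \setminus \{\emptyset\})$ so that $x\, S_w\, V$ expresses that $V$ collectively witnesses every demand $B \rhd C \in w$ with $B \in x$, namely that every element of $V$ satisfies all such $C$'s; one then closes $S_w$ under monotonicity and under whichever additional closure operations are dictated by the axioms of $L$. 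The truth lemma is proved by induction on formulas, with the $\rhd$-clause as the delicate case requiring the specific construction of $S_w$.

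The main obstacle is arranging $S_w$ so that three requirements hold simultaneously: the truth lemma goes through (enough witnessing sets exist), monotonicity is preserved, and the semantic conditions from Fact~\ref{gFC} corresponding to the axioms of $L$ are satisfied. The condition for $\J{2}$ is particularly delicate because it requires closure under $V$-indexed unions of bundles $U_y$, which must be arranged in conjunction with closure under monotonicity without creating spurious $S_w$-edges that break the truth lemma. Logics containing $\J{4}_+$ force one to trim the second coordinate of $S_w$ to $R[w]$, while those containing $\J{5}$ require the singletons $\{z\}$ to appear in $S_x$ above $y$ whenever $x R y R z$; each such decoration must be added so as not to disturb the previously verified properties, and the verification must be done case by case for each $L$ in Figure~\ref{Fig2}. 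Once the $L$-specific closure conditions on $S_w$ are pinned down, the rest of the argument is largely routine bookkeeping, parallel to the proof of Fact~\ref{comj15}.
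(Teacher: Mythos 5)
This statement is presented in the paper as a Fact imported from the cited work of Kurahashi and Okawa; the paper gives no proof of it, so your attempt can only be judged on its own merits. The soundness half of your plan is fine: the rules $\R{1}$ and $\R{2}$ do preserve validity on $\ILS$-frames for the reasons you give, and the axiom cases reduce to Fact~\ref{gFC}.

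The completeness half, however, has a genuine gap, and it is not merely that you leave the construction of $S_w$ ``to be pinned down'': the one concrete definition you do offer is wrong. You propose that $x \, S_w \, V$ should mean that \emph{every} element of $V$ satisfies \emph{all} the formulas $C$ with $B \rhd C \in w$ and $B \in x$. In Verbrugge semantics the witness set $V$ in the clause for $w \Vdash B \rhd C$ is chosen per demand: distinct formulas $B \rhd C$ and $B \rhd C'$ true at $w$ may be witnessed above the same $x$ by distinct sets $V$ and $V'$, and this is precisely what makes the generalized semantics strictly more permissive than $\IL^-\!$-frames. Requiring one $V$ to serve all demands simultaneously breaks the truth lemma for the weak logics of Figure~\ref{Fig2}. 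For instance, $\{\Diamond B,\ B \rhd C,\ B \rhd \lnot C\}$ is $\IL^-(\J{4})$-consistent (take $w R x$, $x \Vdash B$, and let $x \, S_w \, \{y_1\}$ and $x \, S_w \, \{y_2\}$ with $y_1 \Vdash C$, $y_2 \Vdash \lnot C$, both $R$-accessible from $w$); yet under your definition no nonempty $V$ with $x \, S_w \, V$ could exist for any $x$ containing $B$, since its elements would have to contain both $C$ and $\lnot C$, so the canonical model would refute $B \rhd C$ at $w$ even though $B \rhd C \in w$. The construction has to attach a separate witness set to each single demand (e.g.\ $x \, S_w \, V$ iff $V$ extends a canonical set of worlds containing some one $C$ with $B \rhd C \in w$ and $B \in x$, upward closure giving monotonicity), and only then add the $\J{2}$-, $\J{4}_+$-, and $\J{5}$-specific decorations and check the conditions of Fact~\ref{gFC} logic by logic. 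Until that definition is fixed and the truth lemma is carried through, the direction $(2) \Rightarrow (1)$ is not established.
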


\section{Sublogics of the unary interpretability logic $\il$}\label{Sec:Uils}

In this section, we investigate sublogics of the unary interpretability logic $\il$ which was introduced by de Rijke \cite{deR92}. 

For any $\mathcal{L}(\Box, \rhd)$-formula $A$, we define the unary modal operator $\uI$ by 
\[
\uI A :\equiv \top \rhd A. 
\] 
The language $\mathcal{L}(\Box, \uI)$ of unary interpretability logics is obtained by adding $\uI$ to $\mathcal{L}(\Box)$. Then, $\mathcal{L}(\Box, \uI)$-formulas are given by 
\[
A::= \bot \mid p \mid A \to A \mid \Box A \mid \uI A. 
\]
De Rijke \cite{deR92} introduced the following unary interpretability logic $\il$, and showed that $\IL$ is a conservative extension of $\il$ concerning $\mathcal{L}(\Box, \uI)$-formulas. 

\begin{defn}
The unary interpretability logic $\mathbf{il}$ has axiom schemata $\G{2}$, $\G{3}$, and the following: 
\begin{itemize}
	\item[$\G{1}'$:] All tautologies in the language $\mathcal{L}(\Box, \uI)$;
	\item[$\I{1}$:] $\uI \Box \bot$;
	\item[$\I{2}$:] $\Box(A \to B) \to (\uI A  \to \uI B)$;
	\item[$\I{3}$:] $\uI(A \lor \Diamond A) \to \uI A$;
	\item[$\I{4}$:] $\uI A \land \Diamond \top \to \Diamond A $.
\end{itemize}
The inference rules of $\il$ are Modus Ponens and Necessitation. 
\end{defn}

\begin{fact}[de Rijke \cite{deR92}]\label{Fact:ilIL}
For any $\mathcal{L}(\Box, \uI)$-formula $A$, 
\[
\IL \vdash A \Longleftrightarrow \il \vdash A.
\] 
\end{fact}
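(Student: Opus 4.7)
The plan is to prove the two directions separately; the conservativity direction $\IL \vdash A \Rightarrow \il \vdash A$ is substantially the harder one.

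For the soundness direction $\il \vdash A \Rightarrow \IL \vdash A$, I would show that every axiom schema of $\il$ is derivable in $\IL$ under the reading $\uI B \equiv \top \rhd B$; the two inference rules then carry over verbatim. Axiom $\I{2}$ follows from $\J{1}$ together with $\J{2}$, and $\I{4}$ is a direct instance of $\J{4}$ with $A := \top$. For $\I{3}$, combine $A \rhd A$ (from $\J{1}$) with the $\J{5}$-instance $\Diamond A \rhd A$ via $\J{3}$ to obtain $(A \lor \Diamond A) \rhd A$, and then apply $\J{2}$. For $\I{1}$, note first that L\"ob's axiom $\G{3}$ applied with $A := \bot$ yields $\GL \vdash \Box \bot \lor \Diamond \Box \bot$; necessitation then gives $\Box(\Box \bot \lor \Diamond \Box \bot)$, and $\J{1}$ produces $\top \rhd (\Box \bot \lor \Diamond \Box \bot)$. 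Combining $\Box \bot \rhd \Box \bot$ (via $\J{1}$) with $\Diamond \Box \bot \rhd \Box \bot$ (via $\J{5}$) through $\J{3}$ and $\J{2}$ finally delivers $\top \rhd \Box \bot$, which is $\I{1}$.

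For the conservativity direction I would argue contrapositively. Suppose $\il \not\vdash A$ for some $\mathcal{L}(\Box, \uI)$-formula $A$; by de Rijke's completeness theorem for $\il$ there is a Kripke-style $\il$-countermodel $\mathcal{M} = (W, R, \{S_{w}\}_{w \in W}, \Vdash)$ for $A$. From $\mathcal{M}$ I would construct an $\IL^{-}\!$-model $\mathcal{M}' = (W, R, \{S'_{w}\}_{w \in W}, \Vdash)$ on the same underlying set, $R$-relation, and valuation, by defining a binary relation $S'_{w} \subseteq R[w] \times W$ in such a way that (i) the frame conditions for $\J{1}$--$\J{5}$ listed in Fact~\ref{FC} all hold, so that $\mathcal{M}'$ is an $\IL$-Veltman model, and (ii) for every subformula $B$ of $A$, the forcing of $\top \rhd B$ at each world of $\mathcal{M}'$ coincides with the forcing of $\uI B$ at that world of $\mathcal{M}$. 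A natural first attempt is to put $x S'_{w} y$ exactly when $wRx$ and at least one of $x = y$, $xRy$, or ``$y$ belongs to a canonical witness set at $w$ extracted from $\mathcal{M}$'' holds: the first two disjuncts enforce the frame conditions for $\J{1}$ and $\J{5}$, while the third transports the unary witnesses required by $\mathcal{M}$. Once (ii) is verified by induction on $B$, soundness of $\IL$ with respect to $\IL^{-}\!$-frames (Fact~\ref{comj15}) yields $\IL \not\vdash A$.

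The main obstacle will be calibrating $S'_{w}$ so that (i) and (ii) hold simultaneously. Enlarging $S'_{w}$ in order to satisfy the transitivity-type condition corresponding to $\J{2}_{+}$, or the reachability condition corresponding to $\J{4}_{+}$, threatens to introduce new pairs $(x, y)$ that turn some $\top \rhd B$ true in $\mathcal{M}'$ at a world where $\uI B$ is false in $\mathcal{M}$. The closure properties that the axioms $\I{1}$--$\I{4}$ force on $\mathcal{M}$ are precisely what should justify the necessary closures on $\mathcal{M}'$, and the induction step for (ii) is where this correspondence between the unary closure on the $\il$-side and the binary closure on the $\IL$-side must be pinned down carefully.
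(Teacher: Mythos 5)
Your soundness direction ($\il \vdash A \Rightarrow \IL \vdash A$) is correct and complete: the derivations of $\I{1}$--$\I{4}$ from $\J{1}$--$\J{5}$ are all valid (the L\"ob-based derivation of $\I{1}$ in particular is fine), and this matches what the paper disposes of via Proposition~\ref{sound} and the identification of $\il$ with $\il^-(\uJ{1}, \I{2}, \I{3})$.

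The conservativity direction, however, has a genuine gap: the entire content of the proof is the construction of $S'_{w}$, and you do not give it. You propose $x S'_{w} y$ iff $x = y$, or $x R y$, or $y$ lies in an unspecified ``canonical witness set at $w$'', and then correctly observe that this must be calibrated against the $\J{2}_{+}$ and $\J{4}_{+}$ frame conditions --- but that calibration is exactly the hard part, and your first attempt demonstrably fails it. For instance, if $x S'_{w} y$ holds because $y$ is a witness at $w$ and $y S'_{w} z$ holds because $y R z$, transitivity of $S'_{w}$ demands $x S'_{w} z$, and none of your three disjuncts need apply to $(x,z)$; dually, adding all pairs with $x R y$ to $S'_{w}$ (needed for $\J{5}$) can make $\top \rhd B$ true at a world of $\mathcal{M}'$ where $\uI B$ fails in $\mathcal{M}$, and showing that the $\il$-axioms (chiefly $\I{3}$) rule this out is precisely the delicate induction you defer. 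The approach also presupposes a completeness theorem for $\il$ with respect to a Veltman-style semantics, which is itself a substantial input. The paper sidesteps both issues by working syntactically: it builds the $\IL^-\!$-countermodel directly from $\Phi$-maximally $\il$-consistent sets, taking worlds to be pairs $(\Gamma, \tau)$ where the auxiliary sequence $\tau$ of formulas is used to define $S_{(\Gamma,\tau)}$ so that transitivity ($\J{2}_{+}$), $\J{4}_{+}$, $\J{5}$, and $\J{1}$ all hold while the Truth Lemma for $\uI C$ is preserved (Theorem~\ref{ilcomp2a5}), the required witnesses being supplied by Lemmas~\ref{lem1} and~\ref{lem2}. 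Until you specify $S'_{w}$ precisely and carry out the induction establishing condition (ii), your argument for the hard direction is a plan rather than a proof.
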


In this paper, we find out the sublogic of $\il$ corresponding to each of the sublogics of $\IL$ satisfying the equivalence relation as in Fact~\ref{Fact:ilIL}.
To find such logics, we newly introduce the following logic $\il^-$ which is intended to correspond to $\IL^-$.

\begin{defn}\label{Def:il-}
The logic $\il^-$ has axiom schemata $\G{1}'$, $\G{2}$, $\G{3}$, and the following: 
\begin{itemize}
	\item[$ \uJ{6}$:] $\Box \bot \leftrightarrow \uI \bot$. 
\end{itemize}
The inference rules of $\il^-$ are Modus Ponens, Necessitation, and the following: 
\begin{itemize}
	\item[$\uR$] $\dfrac{A \to B}{\uI A \to \uI B}$. 
\end{itemize}
\end{defn}

We newly consider the following axiom schemata: 

\begin{itemize}
	\item[$\uJ{1}$:] $\Box A \to \uI A$;
	\item[$\uJ{15}$:] $\Box (A \lor \Diamond A) \to \uI A$;
	\item[$ \uJ{25}$:]  $\Box(A \to \Diamond B) \to (\uI A \to \uI B)$.
\end{itemize}

The rest of this paper is devoted proving the following main theorem. 

\begin{thm}\label{MainTheorem}

For each row on the two tables in Table 1, let $L$ be the logic at the left side of the row and let $\ell$ be the logic at the right side of that. Then, for any $\mathcal{L}(\Box, \uI)$-formula $A$, 
\[
L \vdash A \Longleftrightarrow \ell \vdash A.
\]
\begin{table}[h]\label{Tab1}
\centering
\begin{tabular}[t]{|l|l|}
\hline
$L$ & $\ell$ \\
\hline
\hline
$\IL^-$ & $\il^-$ \\
\cline{1-1}
$\IL^-(\J{5})$ & \\
\hline
$\IL^-(\J{1})$ & $\il^{-}(\uJ{1})$ \\
\hline
$\IL^-(\J{4}_+)$ & $\il^{-}(\I{2})$ \\
\cline{1-1}
$\IL^-(\J{4}_+, \J{5})$ &  \\
\cline{1-1}
$\IL^-(\J{2}_+)$ &  \\
\hline
$\IL^-(\J{1}, \J{5})$ & $\il^{-}(\uJ{15})$ \\
\hline
$\IL^-(\J{1}, \J{4}_+)$ & $\il^{-}(\uJ{1}, \I{2})$ \\
\cline{1-1}
$\CL$ &  \\
\hline
$\IL^-(\J{1}, \J{4}_+, \J{5})$ & $\il^{-}(\uJ{15}, \I{2})$ \\
\hline
$\IL^-(\J{2}_+, \J{5})$ & $\il^{-}(\I{2}, \I{3})$ \\
\hline
$\IL$ & $\il^-(\uJ{1}, \I{2}, \I{3})$ \\ 
\hline
\end{tabular}
\begin{tabular}[t]{|l|l|}
\hline
$L$ & $\ell$ \\
\hline
\hline
$\IL^-(\J{4})$ & $\il^-(\I{4})$ \\
\cline{1-1}
$\IL^-(\J{4}, \J{5})$ & \\
\cline{1-1}
$\IL^-(\J{2})$ & \\
\hline
$\IL^-(\J{1}, \J{4})$ & $\il^{-}(\uJ{1}, \I{4})$ \\
\hline
$\IL^-(\J{1}, \J{4}, \J{5})$ & $\il^{-}(\uJ{15}, \I{4})$ \\
\hline
$\IL^-(\J{2}, \J{5})$ & $\il^{-}(\uJ{25}, \I{4})$ \\
\hline
$\IL^-(\J{2}, \J{4}_{+})$  & $\il^-(\I{2})$ \\
\hline
$\IL^-(\J{2}, \J{4}_{+}, \J{5})$  & $\il^{-}(\I{2}, \uJ{25})$ \\
\hline
\end{tabular}
\caption{The unary interpretability logics correspond to the twenty sublogics of $\IL$}
\end{table}
\end{thm}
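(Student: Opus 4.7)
The plan is to prove each biconditional $L \vdash A \Leftrightarrow \ell \vdash A$ by handling its two directions separately for the twenty rows of Table~\ref{Tab1}.

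For the direction $\ell \vdash A \Rightarrow L \vdash A$ I would argue syntactically. Under the reading $\uI B :\equiv \top \rhd B$, each axiom schema of $\ell$ should become a theorem of the corresponding $L$ and the rule $\uR$ should be derivable: $\uJ{6}$ is the $A \equiv \bot$ instance of $\J{6}$; $\uJ{1}$ follows from $\J{1}$ together with the tautology $A \to (\top \to A)$; $\I{2}$ is the $C \equiv \top$ instance of $\J{4}_+$; $\I{4}$ is the $A \equiv \top$ instance of $\J{4}$; and $\uJ{15}, \uJ{25}, \I{3}$ are to be assembled from $\J{1}, \J{5}, \J{2}, \J{2}_+$ as appropriate — for instance, $\I{3}$ is obtained by applying $\J{2}_+$ with $B \equiv \Diamond A'$ and $C \equiv A'$ together with $\J{5}$. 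Admissibility of $\uR$ follows by applying Necessitation and $\R{1}$ to its premise. This half should fit within Section~\ref{Sec:Uils}.

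For the direction $L \vdash A \Rightarrow \ell \vdash A$ on $\mathcal{L}(\Box, \uI)$-formulas $A$ I would argue semantically. First, introduce for each $\ell$ a class of \emph{unary frames} (a set $W$ with $R$ and, for each $w$, a unary accessibility witnessing $\uI$), define satisfaction, and prove soundness and completeness of $\ell$ with respect to this class by a canonical model construction. Analogues of Facts~\ref{FC} and~\ref{gFC} would characterize each of $\uJ{1}, \uJ{15}, \uJ{25}, \I{2}, \I{3}, \I{4}$ by corresponding frame conditions. Then, assuming $\ell \not\vdash A$, take a unary countermodel $\mathcal{M}$ and expand it to a binary $\IL^-$-model (or $\ILS$-model, as dictated by the row of Table~\ref{Tab1}) whose underlying frame validates the axioms of $L$, defining $\{S_w\}$ from the unary accessibility in such a way that $w \Vdash \top \rhd B$ in the expanded model matches $w \Vdash \uI B$ in $\mathcal{M}$ for every subformula $B$ of $A$. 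This yields a countermodel to $A$ in $L$, contradicting $L \vdash A$.

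The main obstacle is this expansion step: $S_w$ must simultaneously realize $\top \rhd B$ exactly where the unary model forces $\uI B$ \emph{and} satisfy the frame conditions from Facts~\ref{FC} and~\ref{gFC} that correspond to the $\J$-axioms of $L$. For the rows in the right-hand half of Table~\ref{Tab1}, only Verbrugge-style $\ILS$-frames are available for $L$, so $S_w$ must respect monotonicity together with the union condition characterizing $\J{2}$; the extra unary axioms $\I{4}$ and $\uJ{25}$ appearing on these rows are, I expect, precisely what is needed to let the expansion succeed. Pinpointing, for each $L$, the minimal combination of unary axioms that makes this expansion go through — so that the twenty pairings in Table~\ref{Tab1} come out exactly as claimed, including the collapses where distinct $L$'s share a common $\ell$ (e.g.\ $\IL^-$ and $\IL^-(\J{5})$ both correspond to $\il^-$) — is the chief technical content of the proof.
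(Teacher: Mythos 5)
Your syntactic half (the direction $\ell \vdash A \Rightarrow L \vdash A$) matches the paper's Proposition~\ref{sound} in substance, and the derivations you sketch for $\uJ{6}$, $\uJ{1}$, $\I{2}$, $\I{4}$, $\I{3}$ are essentially the ones used there (with $\uR$ admissible via $\R{1}$ applied with $C \equiv \top$). The problem is the semantic half. You propose a two-stage route: first set up a standalone ``unary frame'' semantics for each $\ell$, prove completeness of $\ell$ for it, and then \emph{expand} an arbitrary unary countermodel to an $\IL^-$- or $\ILS$-model validating the axioms of $L$. The expansion step is not an implementation detail you can defer --- it is the entire difficulty, and as stated it does not go through. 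Given an arbitrary unary countermodel, at each world $w$ you must manufacture a single relation $S_w \subseteq R[w] \times W$ that simultaneously (i) provides, for every relevant $B$ with $\uI B$ forced at $w$, an $S_w$-successor forcing $B$ for \emph{every} $R$-successor of $w$, (ii) withholds such successors at some $R$-successor whenever $\uI B$ fails, and (iii) satisfies the frame conditions of Facts~\ref{FC}/\ref{gFC} for the $\J$-axioms of $L$ (transitivity of $S_w$ for $\J{2}_+$, the $\J{5}$ condition, etc.). These demands interact across the different formulas $B$ at the same pair of worlds, and nothing about an arbitrary unary model guarantees they are jointly satisfiable; you yourself flag this as the ``chief technical content'' and leave it open. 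A proof that ends by identifying its own missing core is not a proof.

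The paper avoids both the detour and the obstruction by building the $\IL^-$- or $\ILS$-countermodel \emph{directly} from $\Phi$-maximally $\ell$-consistent sets (no intermediate unary semantics, no unary completeness theorem). The key device is that worlds are not just maximal consistent sets $\Gamma$ but pairs $(\Gamma, B)$ with $B \in \Phi_{\uI}$ (or $(\Gamma,\tau)$ with $\tau$ a finite sequence of such formulas, in Theorems~\ref{ilcomp2a5} and~\ref{ilincomp25245}): the second coordinate records which $\uI$-formula an $R$-successor is responsible for refuting, which is exactly the extra information needed to define $S_{(\Gamma,B)}$ coherently, verify the $\J$-conditions of Facts~\ref{FC} and~\ref{gFC}, and push the Truth Lemma for $\uI C$ through using Lemmas~\ref{lem0}--\ref{lem3}. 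If you want to salvage your plan, you would in effect have to restrict the expansion to a canonical unary model and equip its worlds with this kind of tag --- at which point you have reconstructed the paper's one-stage argument and the separate unary semantics buys you nothing.
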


\begin{rem}\label{Rem:Collpse}
Theorem~\ref{MainTheorem} tells us that the axiom schemata $\J{5}$, $\J{2}_{+}$, and $\J{2}$ may collapse concerning the unary interpretability logics. 
For example, for any $\mathcal{L}(\Box, \uI)$-formula $A$, we obtain 
\[
\IL^- \vdash A \Longleftrightarrow \IL^-(\J{5}) \vdash A \Longleftrightarrow \il^- \vdash A.
\] 
On the other hand, there are cases where the axiom schemata $\J{5}$, $\J{2}_{+}$, and $\J{2}$ do not collapse. 
For example, for any $\mathcal{L}(\Box, \uI)$-formula $A$, we obtain 
\[
\IL^-(\J{1}) \vdash A \Longleftrightarrow \il^-(\uJ{1}) \vdash A, \, \text{and} \, \, \IL^-(\J{1}, \J{5}) \vdash A \Longleftrightarrow \il^-(\uJ{15}) \vdash A.
\]
However, $\il^-(\uJ{1}) \nvdash \uJ{15}$ holds. 
Let $(W, R, \{S_{w}\}_{w \in W}, \Vdash)$ be the $\IL^-$-model defined as follows:
\begin{itemize}
	\item $W := \{w, x, y\}$; 
	\item $R := \{(w, x), (w, y), (x, y)\}$; 
	\item $S_{w} := \{(x,x), (y, y)\}$, $S_{x}:= \{(y,y)\}$, and $S_{y} :=\emptyset$;
	\item $a \Vdash p :\Longleftrightarrow a = y$. 
\end{itemize}
\begin{figure}[th]
\centering
\begin{tikzpicture}

\draw (3, 1.5) node {$S_{w}$};
\draw (6, 1.5) node {$S_{w}$, $S_{x}$};
\draw (6.7, 0) node {$p$};

\node [draw, circle, inner sep=8] (0) at (0,0) {$w$};
\node [draw, circle, inner sep=8] (1) at (3, 0) {$x$};
\node [draw, circle, inner sep=8] (2) at (6, 0) {$y$};

\draw [thick, ->] (0)--(1);
\draw [thick, ->] (1)--(2);

\draw [thick, <-, dashed] (3.35,0.38) arc (-45:225:0.5);
\draw [thick, <-, dashed] (6.35,0.38) arc (-45:225:0.5);

\end{tikzpicture}
\end{figure}
It is easily seen that $(W, R, \{S_{w}\}_{w \in W})$ is an $\IL^-\!$-frame in which $\J{1}$ is valid by Fact~\ref{FC}.1. 
Moreover, $w \nVdash \Box (p \lor \Diamond p) \to \uI p$. 
Thus, we obtain $\IL^-(\J{1}) \nvdash \uJ{15}$ by Fact~\ref{comj15}, and hence $\il^-(\uJ{1}) \nvdash \uJ{15}$. 
\end{rem}

We can easily show the implication $\Longleftarrow$ in Theorem~\ref{MainTheorem} by the following proposition.  

\begin{prop}\label{sound}
\noindent
\begin{enumerate}
	\item Let $L$ be any extension of $\IL^-$ closed under the rule $\R{1}$. Then, $L$ is closed under the rule $\uR$.
	\item $\IL^- \vdash \uJ{6}$.
	\item $\IL^-(\J{1}) \vdash \uJ{1}$.
	\item $\IL^-(\J{4}) \vdash \I{4}$.
	\item $\IL^-(\J{4}_{+}) \vdash \I{2}$.
	\item $\IL^-(\J{1}, \J{5}) \vdash \uJ{15}$.
	\item $\IL^-(\J{2}, \J{5}) \vdash \uJ{25}$.
	\item $\IL^-(\J{2}_{+}, \J{5}) \vdash \I{3}$.
\end{enumerate}
\end{prop}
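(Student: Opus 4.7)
The overall strategy is that almost every clause is an easy substitution into the binary axiom(s), exploiting the definitional expansion $\uI A \equiv \top \rhd A$ and, where needed, a short deduction using the $\IL^-$ base theorems. The plan is to do part~1 first as a structural observation, then dispatch parts~2--5 by a single substitution each, and finally handle parts~6--8 by combining two axioms together with $\J{3}$ or the derived rule from Fact~\ref{p1}.1.

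For part~1, the derived rule $\uR$ is obtained by specializing $\R{1}$ to $C := \top$: if $L\vdash A\to B$, then $L\vdash (\top\rhd A)\to(\top\rhd B)$, which is precisely $\uI A\to\uI B$. Parts~2--5 are then immediate from the abbreviation $\uI A\equiv\top\rhd A$: part~2 is $\J{6}$ with $A:=\bot$ (using $\neg\bot\leftrightarrow\top$); part~3 is $\J{1}$ with the antecedent $A:=\top$ and conclusion relabeled; part~4 is $\J{4}$ with $A:=\top$; and part~5 is $\J{4}_{+}$ with $C:=\top$. Each of these is a one-line substitution; the only care is to rewrite $\Box(\top\to X)$ as $\Box X$ using $\G{1}'$ and $\G{2}$.

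Parts~6--8 require short derivations. For part~6, I first use $\J{1}$ twice to get $A\rhd A$ (from $\Box(A\to A)$, which is a theorem via Necessitation) and use $\J{5}$ to get $\Diamond A\rhd A$; combining these with $\J{3}$ yields $(A\lor\Diamond A)\rhd A$. Then Fact~\ref{p1}.1 in the form $\Box(\top\to(A\lor\Diamond A))\to((A\lor\Diamond A)\rhd A\to\top\rhd A)$ converts the hypothesis $\Box(A\lor\Diamond A)$ into $\top\rhd A=\uI A$. For part~7, Fact~\ref{p1}.1 gives $\Box(A\to\Diamond B)\to(\Diamond B\rhd B\to A\rhd B)$, and $\J{5}$ supplies the inner antecedent, producing $\Box(A\to\Diamond B)\to A\rhd B$; then $\J{2}$ with $\top\rhd A$ yields $\top\rhd B$, i.e.\ the desired implication $\uI A\to\uI B$. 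For part~8, apply $\J{2}_{+}$ with the $A$-slot instantiated to $\top$ and with $B:=\Diamond A$, $C:=A$ to obtain $(\top\rhd(\Diamond A\lor A))\land(\Diamond A\rhd A)\to\top\rhd A$; the second conjunct is supplied by $\J{5}$, giving $\uI(A\lor\Diamond A)\to\uI A$ after commuting the disjunction.

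None of these steps is genuinely hard, so the main obstacle is only bookkeeping: keeping track of the definitional unfolding $\uI X\equiv\top\rhd X$, silently using Necessitation together with $\G{1}'$, $\G{2}$ to pass between $\Box(\top\to X)$ and $\Box X$, and using the weakened transitivity principle of Fact~\ref{p1}.1 in place of $\J{2}$ in parts~6 and~7 where the hypotheses do not include $\J{2}$ but do include $\J{1}$ (and hence $\R{2}$-style reasoning via Fact~\ref{p1}.1 is available already in $\IL^-$).
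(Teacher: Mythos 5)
Your proposal is correct and takes essentially the same approach as the paper: the paper likewise treats parts 1--5 and 8 as immediate substitutions of $\top$ into the binary axioms and proves 7 exactly as you do (Fact~\ref{p1}.1 plus $\J{5}$ to get $\Box(A\to\Diamond B)\to A\rhd B$, then $\J{2}$). Your part~6 differs only cosmetically --- you derive $(A\lor\Diamond A)\rhd A$ unconditionally from $A\rhd A$, $\Diamond A\rhd A$, and $\J{3}$ and then apply Fact~\ref{p1}.1, whereas the paper applies $\J{3}$ to the split $\top\leftrightarrow(\Diamond A\lor\Box\lnot A)$ and finishes with $\R{2}$ --- but both use the same toolkit and are equally valid.
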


\begin{proof}
1, 2, 3, 4, 5, and 8 are straightforward. 
We only prove $6$ and $7$.

$6$: Since $\IL^- \vdash \Box(A \lor \Diamond A) \to \Box(\Box \lnot A \to A)$, we have 
\[
\IL^-(\J{1}) \vdash \Box(A \lor \Diamond A) \to (\Box \lnot A \rhd A).
\]
Since $\IL^-(\J{5}) \vdash \Diamond A \rhd A$, we obtain 
\[
\IL^-(\J{1}, \J{5}) \vdash \Box(A \lor \Diamond A) \to \bigl((\Diamond A \lor \Box \lnot A) \rhd A\bigr)
\] 
by $\J{3}$. 
Since $\IL^- \vdash \top \leftrightarrow (\Diamond A \lor \Box \lnot A)$, we have $\IL^- \vdash \uI A \leftrightarrow \bigl((\Diamond A \lor \Box \lnot A) \rhd A\bigr)$ by $\R{2}$. Hence, 
\[
\IL^-(\J{1}, \J{5}) \vdash \Box (A \lor \Diamond A) \to \uI A. 
\]

$7$: By Fact \ref{p1}.1, we have $\IL^- \vdash \Box(A \to \Diamond B) \to \bigl((\Diamond B \rhd B) \to (A \rhd B)\bigr)$. 
Therefore, we obtain 
\[
\IL^-(\J{5}) \vdash \Box(A \to \Diamond B) \to (A \rhd B).
\] 
Since $\IL^-(\J{2}) \vdash (A \rhd B) \land \uI A \to \uI B$, we obtain 
\[
\IL^-(\J{2}, \J{5}) \vdash \Box(A \to \Diamond B) \to (\uI A \to \uI B). \qedhere
\]
\end{proof}

Furthermore, we prove that $\il^-(\uJ{1}, \I{2}, \I{3})$ is deductively equivalent to de Rijke's $\il$ by the following proposition. 

\begin{prop}\label{relation}
\noindent
\begin{enumerate}
	\item $\il^-(\uJ{1}, \I{3}) \vdash \uJ{15}$.
	\item $\il^-(\uJ{15}) \vdash \uJ{1}$.
	\item $\il^-(\uJ{15}) \vdash \I{1}$.
	\item $\il^-(\I{1}, \I{2}) \vdash \uJ{15}$.
	\item $\il^-(\I{2}) \vdash \I{4}$.
	\item $\il \vdash \uJ{6}$.
	\item $\il$ is closed under $\uR$.
\end{enumerate}
\end{prop}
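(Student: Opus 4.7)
The plan is to dispatch the seven parts through direct axiomatic manipulations. All seven are short, but Part 4 carries the real content; once it is in hand, the remaining items suffice to assemble the equivalence $\il \equiv \il^-(\uJ{1}, \I{2}, \I{3})$.

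Parts 1, 2, 5 and 7 are one-step arguments. For Part 1, instantiate $\uJ{1}$ at $A \lor \Diamond A$ to obtain $\uI(A \lor \Diamond A)$ from $\Box(A \lor \Diamond A)$, then apply $\I{3}$ to collapse this to $\uI A$. For Part 2, from the tautology $A \to A \lor \Diamond A$, Necessitation and $\G{2}$ yield $\Box A \to \Box(A \lor \Diamond A)$, which $\uJ{15}$ contracts to $\uI A$. For Part 5, take $B := \bot$ in $\I{2}$ to obtain $\Box \lnot A \to (\uI A \to \uI \bot)$; combined with the half $\uI \bot \to \Box \bot$ of the $\il^-$-axiom $\uJ{6}$ and contrapositively rearranged, this gives $\uI A \land \Diamond \top \to \Diamond A$. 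Part 7 is immediate: from $\vdash A \to B$, Necessitation produces $\Box(A \to B)$, and $\I{2}$ delivers $\uI A \to \uI B$.

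Part 3 and Part 6 each need one extra observation. For Part 3, instantiating $\G{3}$ at $A := \bot$ gives $\Box(\Box \bot \to \bot) \to \Box \bot$, equivalently $\vdash \Box \bot \lor \Diamond \Box \bot$; Necessitation then yields $\Box(\Box \bot \lor \Diamond \Box \bot)$, and $\uJ{15}$ with $A := \Box \bot$ produces $\uI \Box \bot$, which is $\I{1}$. For Part 6, the direction $\Box \bot \to \uI \bot$ follows because $\Box \bot \to \Box(\Box \bot \to \bot)$ is trivial, so $\I{2}$ combined with $\I{1}$ closes it; the converse $\uI \bot \to \Box \bot$ is the $A := \bot$ case of $\I{4}$, which $\il$ has as an axiom.

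Part 4 is the main obstacle. I would route it through the auxiliary lemma
\[
\il^- \vdash \Box(A \lor \Diamond A) \to \Box(\Box \bot \to A).
\]
The semantic intuition is that a world satisfying $\Box \bot$ has no $R$-successors, so $\Diamond A$ is false there and $A \lor \Diamond A$ collapses to $A$. Formally, from the tautology $\bot \to \lnot A$, Necessitation and $\G{2}$ give $\Box \bot \to \lnot \Diamond A$; propositionally this upgrades to $(A \lor \Diamond A) \to (\Box \bot \to A)$, and one further Necessitation plus $\G{2}$ distributes $\Box$ over the implication to reach the displayed lemma. Applying $\I{2}$ then converts it into $\Box(A \lor \Diamond A) \to (\uI \Box \bot \to \uI A)$, and $\I{1}$ discharges $\uI \Box \bot$, leaving precisely $\uJ{15}$.
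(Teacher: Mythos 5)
Your proposal is correct and follows essentially the same route as the paper's proof for all seven parts. The only (cosmetic) divergence is in Part 4, where you box the implication $\Box\bot \to A$ and discharge $\uI\Box\bot$ directly via $\I{2}$ and $\I{1}$, whereas the paper passes through the intermediate $\uI\Box\lnot A$ using the rule $\uR$; both derivations are valid and rest on the same idea.
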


\begin{proof}
$1$: Since $\il^-(\uJ{1}) \vdash \Box (A \lor \Diamond A) \to \uI(A \lor \Diamond A)$, we obtain 
\[
\il^-(\uJ{1}, \I{3}) \vdash \Box (A \lor \Diamond A) \to \uI A.
\]

$2$: This is because $\il^- \vdash \Box A \to \Box(A \lor \Diamond A)$. 

$3$: We have 
\[
\il^-(\uJ{15}) \vdash \Box(\Box \bot \lor \Diamond \Box \bot) \to \uI \Box \bot.
\] 
Since $\il^- \vdash \Diamond \Box \bot \leftrightarrow \lnot \Box \bot$ by $\G{3}$, we obtain 
\[
\il^-(\uJ{15}) \vdash \uI \Box \bot. 
\]

$4$: Since $\il^- \vdash \Box(A \lor \Diamond A) \to \Box(\Box\lnot A \to A)$, we have 
\[
\il^-(\I{2}) \vdash \Box(A \lor \Diamond A) \to (\uI \Box \lnot A \to \uI A).
\]
Since $\il^- \vdash \Box \bot \to \Box\lnot A$, we have $\il^- \vdash \uI \Box \bot \to \uI \Box\lnot A$ by $\uR$. 
Therefore, we obtain
\[
\il^-(\I{1}, \I{2}) \vdash \Box(A \lor \Diamond A) \to \uI A.
\]

$5$: Since $\il^-(\I{2}) \vdash \uI A \land \lnot \uI \bot \to \Diamond A$, we obtain 
\[
\il^-(\I{2}) \vdash \uI A \land \Diamond \top \to \Diamond A
\]
by $\uJ{6}$. 

$6$: By $\I{4}$, we have $\il \vdash \uI \bot \land \Box \top \to \Box \bot$. Therefore, 
\[
\il \vdash \uI \bot \to \Box \bot. 
\]
On the other hand, since $\il \vdash \Box \bot \to \Box (\Box \bot \to \bot)$, we have 
\[
\il \vdash \Box \bot \to (\uI \Box \bot \to \uI \bot)
\]
by $\I{2}$. Thus, by $\I{1}$, we obtain
\[
\il \vdash \Box \bot \to \uI \bot.
\]

$7$: Suppose $\il \vdash A \to B$. Then, $\il \vdash \Box(A \to B)$. 
By $\I{2}$, we conclude $\il \vdash \uI A \to \uI B$. 
\end{proof}

\begin{cor} $\il^-(\uJ{1}, \I{2}, \I{3})$ is deductively equivalent to $\il$. 
\end{cor}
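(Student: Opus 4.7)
The corollary amounts to showing that $\il$ and $\il^-(\uJ{1}, \I{2}, \I{3})$ have the same theorems, which I will do by the standard method of deriving each system's axioms and inference rules inside the other. Most of the needed derivations are already packaged in Proposition~\ref{relation}, so the plan is essentially to assemble its pieces.

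For the inclusion $\il^-(\uJ{1}, \I{2}, \I{3}) \subseteq \il$, the axioms $\G{1}'$, $\G{2}$, $\G{3}$, $\I{2}$, $\I{3}$ together with Modus Ponens and Necessitation are shared and require no work. What remains is $\uJ{6}$, $\uJ{1}$, and closure under the rule $\uR$. The first and third follow directly from Proposition~\ref{relation}.6 and Proposition~\ref{relation}.7. The only step that requires a small calculation is showing $\il \vdash \Box A \to \uI A$: starting from the tautology $A \to (\Box \bot \to A)$, Necessitation and $\G{2}$ yield $\Box A \to \Box(\Box \bot \to A)$; the axiom $\I{2}$ then converts this to $\Box A \to (\uI \Box \bot \to \uI A)$; and combining with $\I{1}$: $\uI \Box \bot$ produces $\Box A \to \uI A$.

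For the converse inclusion $\il \subseteq \il^-(\uJ{1}, \I{2}, \I{3})$, only $\I{1}$ and $\I{4}$ remain to be derived. Proposition~\ref{relation}.5 gives $\I{4}$ from $\I{2}$ at once. For $\I{1}$, I chain Proposition~\ref{relation}.1 ($\il^-(\uJ{1}, \I{3}) \vdash \uJ{15}$) with Proposition~\ref{relation}.3 ($\il^-(\uJ{15}) \vdash \I{1}$) to conclude $\il^-(\uJ{1}, \I{3}) \vdash \I{1}$, which is more than is needed.

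I do not anticipate any real obstacle: every nontrivial implication has already been prepared in Proposition~\ref{relation}, and the only modal-logic derivation one performs from scratch is the short argument for $\uJ{1}$ in $\il$ sketched above. The corollary is therefore essentially bookkeeping on top of the preceding proposition.
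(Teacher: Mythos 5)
Your proof is correct and follows essentially the same route as the paper: both reduce the corollary to assembling the items of Proposition~\ref{relation}. The only difference is organizational --- the paper chains three intermediate deductive equivalences (through $\il^-(\uJ{15}, \I{2}, \I{3})$ and $\il^-(\I{1}, \I{2}, \I{3})$), whereas you argue the two inclusions directly and supply a short explicit derivation of $\uJ{1}$ in $\il$ from $\I{1}$ and $\I{2}$, which the paper obtains implicitly by combining Propositions~\ref{relation}.4 and \ref{relation}.2.
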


\begin{proof}
By Propositions \ref{relation}.1 and 2, $\il^-(\uJ{1}, \I{2}, \I{3})$ is deductively equivalent to $\il^-(\uJ{15}, \I{2}, \I{3})$. 
By Propositions \ref{relation}.3 and 4, $\il^-(\uJ{15}, \I{2}, \I{3})$ is deductively equivalent to $\il^-(\I{1}, \I{2}, \I{3})$. 
By Propositions \ref{relation}.5, 6, and 7, $\il^-(\I{1}, \I{2}, \I{3})$ is deductively equivalent to $\il$. 
\end{proof}

In the next section, we always identify $\il$ with $\il^-(\uJ{1}, \I{2}, \I{3})$.

\section{The proof of Theorem~\ref{MainTheorem}}\label{Sec:MThm}

In this section, we prove the implication $\Longrightarrow$ in Theorem~\ref{MainTheorem}. 
Our strategy is following: 
Let $L$ and $\ell$ be the logics as in Theorem~\ref{MainTheorem}. 
Then, for any $\mathcal{L}(\Box, \uI)$-formula $A$, we construct an $\IL^-$-frame or an $\ILS$-frame such that $A$ is not valid and all axiom schemta in $L$ are valid in that frames under the supposition $\ell \nvdash A$. 
By Facts~\ref{comj15} and~\ref{gcomp}, we obtain $L \nvdash A$. 

In Subsection~\ref{SubSec:Ppt}, we prepare several lemmas to construct suitable frames. 
In Subsection~\ref{SubSec:MCwrtVF}, we prove Theorem~\ref{MainTheorem} for logics which are complete with respect to $\IL^-$-frames. 
In Subsection~\ref{SubSec:MCwrtGVF}, we prove Theorem~\ref{MainTheorem} for logics which are complete with respect to $\ILS$-frames. 

We denote a logic that contains $\il^-$ (resp.~$\IL^-$) and is closed under the rules of it by $\ell$ (resp.~$L$). 

\subsection{Lemmas for the proof of Theorem~\ref{MainTheorem}}\label{SubSec:Ppt}

In this subsection, we prove several lemmas used in Subsections~\ref{SubSec:MCwrtVF} and~\ref{SubSec:MCwrtGVF}.

\begin{defn}
For any $\mathcal{L}(\Box, \uI)$-formula $A$, we define ${\sim} A$ as follows. 
\begin{eqnarray*}
{\sim} A :\equiv \left\{
\begin{array}{ll}
B & \text{if} \, \, A \, \, \text{is of the form} \, \neg B,\\
\neg A & \text{otherwise.}
\end{array}
\right.
\end{eqnarray*}
\end{defn}

\begin{defn}
Let $\Phi$ be any set of $\mathcal{L}(\Box, \uI)$-formulas. 
Then, $\Phi_{\uI} := \{C : \uI C \in \Phi\}$. 
Moreover, we say that $\Phi$ is an \textit{adequate set} if the following conditions hold: 
\begin{enumerate}
	\item $\Phi$ is closed under taking subformulas and $\sim$-operator;  
	\item $\bot \in \Phi_{\uI}$;
	\item If $C, E \in \Phi_{\uI}$, then $\Box C, \Box (C \lor \Diamond C), \Box(E \to \Diamond C), \Box (E \to C), \Box (E \to C \lor \Diamond C) \in \Phi$. 
\end{enumerate}
\end{defn}

Note that for any finite set $X$ of $\mathcal{L}(\Box, \uI)$-formulas, there exists a finite adequate set $\Phi$ including $X$. 

In this subsection, we fix some finite adequate set $\Phi$. 
For any logic $\ell$, we say that a finite set $X$ of modal formulas is \textit{$\ell$-consistent} if $\ell \nvdash \bigwedge X \to \bot$ where $\bigwedge X$ is a conjunction of all elements of $X$. 
Also, we say that \textit{$\Gamma \subseteq \Phi$ is $\Phi$-maximally $\ell$-consistent} if $\Gamma$ is $\ell$-consistent and either $A \in \Gamma$ or ${\sim}A \in \Gamma$ for any $A \in \Phi$. 
Note that for any $\Phi$-maximally $\ell$-consistent set $\Gamma$, $A \in \Gamma$ if $A \in \Phi$ and $\ell \vdash \bigwedge \Gamma \to A$. 
Moreover, if $X$ is $\ell$-consistent, then there exists some $\Phi$-maximally $\ell$-consistent set $\Gamma$ such that $X \subseteq \Gamma$. 

\begin{defn}
Let $\ell$ be any logic. Then, we define
\[
K_{\ell} := \{\Gamma \subseteq \Phi : \Gamma \, \, \text{is} \, \, \Phi\text{-maximally} \, \, \ell\text{-consistent}\}.
\]
\end{defn}

\begin{defn}
Let $\ell$ be any logic and let $\Gamma$, $\Delta \in K_{\ell}$. 
Then, 
\begin{eqnarray*}
\Gamma \prec \Delta :\Longleftrightarrow  
\begin{array}{ll}
1. \, \{B, \Box B : \Box B \in \Gamma\} \subseteq \Delta,\\
2. \, \text{There exists} \, \, \Box C \in \Delta \, \, \text{such that} \, \, \Box C \notin \Gamma. 
\end{array}
\end{eqnarray*}
\end{defn}
Note that $\prec$ is transitive and irreflexive. 

\begin{lem}\label{lem0} 
Let $\ell$ be any logic and $\Gamma \in K_{\ell}$.  
If $\lnot \Box C \in \Gamma$, then there exists $\Delta \in K_{\ell}$ such that $\Gamma \prec \Delta$ and ${\sim}C, \Box C \in \Delta$.
\end{lem}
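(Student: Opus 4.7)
The plan is a standard Henkin-style existence argument, mirroring the classical completeness proof for $\GL$. First I would set
\[
X := \{B, \Box B : \Box B \in \Gamma\} \cup \{{\sim}C, \Box C\}
\]
and note that $X \subseteq \Phi$: every $B, \Box B$ with $\Box B \in \Gamma \subseteq \Phi$ lies in $\Phi$ by closure under subformulas, while ${\sim}C$ and $\Box C$ lie in $\Phi$ because $\lnot \Box C \in \Gamma$ and $\Phi$ is closed under the $\sim$-operator. Once $X$ is shown to be $\ell$-consistent, I would extend it to some $\Phi$-maximally $\ell$-consistent $\Delta$; this $\Delta$ automatically belongs to $K_{\ell}$, satisfies clause~$1$ of $\prec$ by construction, contains $\{{\sim}C, \Box C\}$ by construction, and satisfies clause~$2$ because $\Box C \in \Delta$ while $\Box C \notin \Gamma$ (otherwise both $\Box C$ and $\lnot \Box C$ would lie in $\Gamma$, contradicting its consistency).

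The crux is therefore the $\ell$-consistency of $X$. I would argue by contradiction using a Löb-style derivation. If $X$ were $\ell$-inconsistent, then, via the propositional tautology $\lnot {\sim}C \leftrightarrow C$ (obtained by case analysis on whether $C$ is a negation),
\[
\ell \vdash \bigwedge_{\Box B \in \Gamma}(B \land \Box B) \to (\Box C \to C).
\]
Applying Necessitation and distributing $\Box$ through the implication and the conjunction via $\G{2}$ gives
\[
\ell \vdash \bigwedge_{\Box B \in \Gamma}(\Box B \land \Box\Box B) \to \Box(\Box C \to C).
\]
Löb's axiom $\G{3}$ supplies $\Box(\Box C \to C) \to \Box C$, and the standard $\GL$-theorem $\Box B \to \Box\Box B$ lets the $\Box\Box B$ conjuncts be dropped, producing
\[
\ell \vdash \bigwedge_{\Box B \in \Gamma}\Box B \to \Box C.
\]
Since each $\Box B$ in the antecedent lies in $\Gamma$ and $\Box C \in \Phi$, the $\Phi$-maximal $\ell$-consistency of $\Gamma$ would force $\Box C \in \Gamma$, contradicting $\lnot \Box C \in \Gamma$.

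The only delicate point is ensuring that this entire derivation is available in the weakest ambient logic $\il^{-}$, since the lemma has to apply uniformly to every $\ell \supseteq \il^{-}$. By Definition~\ref{Def:il-}, $\il^{-}$ contains $\G{1}'$, $\G{2}$, $\G{3}$, Modus Ponens, and Necessitation, which are precisely what each step needs --- including the derivation of $\Box B \to \Box\Box B$ from Löb's axiom. No axiom involving $\uI$ is invoked, which is what will let the same $\Delta$ feed into both the $\IL^{-}\!$-frame construction of Subsection~\ref{SubSec:MCwrtVF} and the $\ILS$-frame construction of Subsection~\ref{SubSec:MCwrtGVF}.
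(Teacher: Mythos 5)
Your proof is correct and follows essentially the same route as the paper: the same witness set $X := \{B, \Box B : \Box B \in \Gamma\} \cup \{\Box C, {\sim}C\}$, the same L\"ob-style consistency argument via Necessitation, $\G{2}$, $\Box B \to \Box\Box B$, and $\G{3}$, concluding that $\ell$-inconsistency of $X$ would force $\Box C \in \Gamma$. The paper merely compresses these steps into a three-line derivation ending in $\ell \vdash \bigwedge\Gamma \to \Box C$; your added checks that $X \subseteq \Phi$ and that both clauses of $\prec$ hold are left implicit there.
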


\begin{proof}
It suffices to show that the following set $X$ is $\ell$-consistent. 
\[
X := \{B, \Box B : \Box B \in \Gamma\} \cup \{\Box C, {\sim}C\}. 
\]
Towards a contradiction, we suppose $\ell \vdash \lnot \bigwedge X$. 
\begin{eqnarray*}
\ell &\vdash& \bigwedge\{B, \Box B : \Box B \in \Gamma\} \to (\Box C \to C),\\
\ell &\vdash& \bigwedge \Gamma \to \Box(\Box C \to C),\\
\ell &\vdash& \bigwedge \Gamma \to \Box C.
\end{eqnarray*}
This contradicts $\lnot \Box C \in \Gamma$. 
\end{proof}

\begin{lem}\label{lem1} 
Let $\ell$ be any logic, let $\Gamma \in K_{\ell}$ and let $\lnot \uI C \in \Gamma$. 
Then, the following conditions hold:
\begin{enumerate}
	\item There exists $\Delta \in K_{\ell}$ such that $\Gamma \prec \Delta$ and $\Box \bot \in \Delta$; 
	\item If $\ell \vdash \uJ{1}$, then there exists $\Delta \in K_{\ell}$ such that $\Gamma \prec \Delta$ and ${\sim}C \in \Delta$; 
	\item If $\ell \vdash \uJ{15}$, then there exists $\Delta \in K_{\ell}$ such that $\Gamma \prec \Delta$ and ${\sim}C, \Box \lnot C \in \Delta$. 
\end{enumerate}
\end{lem}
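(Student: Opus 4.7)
The plan is to reduce each of the three parts to Lemma~\ref{lem0} by producing, in each case, a suitable negated box-formula in $\Gamma$ and then invoking Lemma~\ref{lem0} on the corresponding formula; in part~3 a short unpacking will be needed to extract the desired witnesses from the output of Lemma~\ref{lem0}.

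For part~1, I would first show $\lnot \Box \bot \in \Gamma$. If instead $\Box \bot \in \Gamma$, the axiom $\uJ{6}$ yields $\uI \bot \in \Gamma$, and applying $\uR$ to the tautology $\bot \to C$ gives $\ell \vdash \uI \bot \to \uI C$, whence $\uI C \in \Gamma$, contradicting $\lnot \uI C \in \Gamma$. Since $\Box \bot \in \Phi$ by adequacy (condition~3 with $\bot \in \Phi_{\uI}$), the $\Phi$-maximality of $\Gamma$ places $\lnot \Box \bot$ in $\Gamma$, and Lemma~\ref{lem0} applied to the formula $\bot$ delivers the desired $\Delta$.

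For part~2, $\uJ{1}$ gives $\ell \vdash \Box C \to \uI C$, so from $\lnot \uI C \in \Gamma$ and $\Box C \in \Phi$ (adequacy, since $C \in \Phi_{\uI}$) we obtain $\lnot \Box C \in \Gamma$. Lemma~\ref{lem0} applied directly to $C$ then produces $\Delta$ with $\Gamma \prec \Delta$ and ${\sim}C, \Box C \in \Delta$, which is more than we need.

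For part~3, $\uJ{15}$ gives $\ell \vdash \Box(C \lor \Diamond C) \to \uI C$, and the same maneuver yields $\lnot \Box(C \lor \Diamond C) \in \Gamma$ (using $\Box(C \lor \Diamond C) \in \Phi$ from adequacy condition~3). Lemma~\ref{lem0} then provides $\Delta$ with $\Gamma \prec \Delta$ and ${\sim}(C \lor \Diamond C), \Box(C \lor \Diamond C) \in \Delta$. Since ${\sim}(C \lor \Diamond C) = \lnot(C \lor \Diamond C)$ is $\ell$-equivalent to $\lnot C \land \Box \lnot C$, the $\Phi$-maximal $\ell$-consistency of $\Delta$ will force both ${\sim}C$ and $\Box \lnot C$ into $\Delta$, provided these formulas lie in $\Phi$. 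The main obstacle will be precisely this $\Phi$-bookkeeping: ${\sim}C \in \Phi$ follows from closure of $\Phi$ under $\sim$, while $\Box \lnot C$ enters $\Phi$ only because it coincides with $\Box(C \to \bot)$, produced by adequacy condition~3 applied with the placeholders $\bot, C \in \Phi_{\uI}$. This is exactly why the adequate-set definition was designed to include formulas of the shape $\Box(E \to C)$.
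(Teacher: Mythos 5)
Your proposal is correct and follows essentially the same route as the paper's own proof: in each part you show the relevant box-formula ($\Box\bot$, $\Box C$, or $\Box(C \lor \Diamond C)$) cannot lie in $\Gamma$ by the corresponding axiom ($\uJ{6}$ plus $\uR$, $\uJ{1}$, $\uJ{15}$), then invoke Lemma~\ref{lem0} and unpack. You are merely more explicit than the paper about the $\Phi$-membership bookkeeping (e.g.\ that $\Box\lnot C$ is literally $\Box(C \to \bot)$ and hence in $\Phi$ by adequacy), which the paper leaves implicit.
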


\begin{proof}
1: We show $\Box \bot \notin \Gamma$. 
Suppose $\Box \bot \in \Gamma$. Then, $\ell \vdash \bigwedge \Gamma \to \uI \bot$  by $ \uJ{6}$, and hence we obtain $\ell \vdash \bigwedge \Gamma \to \uI C$. 
This contradicts $\lnot \uI C \in \Gamma$. 
Thus, $\Box \bot \notin \Gamma$. 
By Lemma~\ref{lem0}, there exists $\Delta \in K_{\ell}$ such that $\Gamma \prec \Delta$ and $\Box \bot \in \Delta$. 

2: Assume $\ell \vdash \uJ{1}$. 
If $\Box C \in \Gamma$, then $\uI C \in \Gamma$ by $ \uJ{1}$. 
Thus, $\Box C \notin \Gamma$. 
By Lemma~\ref{lem0}, there exists $\Delta \in K_{\ell}$ such that $\Gamma \prec \Delta$ and ${\sim}C \in \Delta$. 

3: Assume $\ell \vdash \uJ{15}$. 
If $\Box (C \lor \Diamond C) \in \Gamma$, then $\uI C \in \Gamma$ by $ \uJ{15}$. 
Thus, $\Box (C \lor \Diamond C) \notin \Gamma$. 
By Lemma~\ref{lem0}, there exists $\Delta \in K_{\ell}$ such that $\Gamma \prec \Delta$ and ${\sim}(C \lor \Diamond C) \in \Delta$. 
Then, $\ell \vdash \bigwedge \Delta \to {\sim}C$ and $\ell \vdash \bigwedge \Delta \to \Box \lnot C$. 
Hence, ${\sim}C, \Box \lnot C \in \Delta$. 
\end{proof}

\begin{lem}\label{lem2} 
Let $\ell$ be any logic, let $\Gamma \in K_{\ell}$, and let $\uI C, \lnot \uI D \in \Gamma$. 
Then, the following conditions hold:
\begin{enumerate}
	\item There exists $\Delta \in K_{\ell}$ such that $C, {\sim}D \in \Delta$. 
	\item If $\ell \vdash \uJ{25}$, then there exists $\Delta \in K_{\ell}$ such that $\Gamma \prec \Delta$ and $C, \Box \lnot D \in \Delta$. 
	\item If $\ell \vdash \I{2}$, then there exists $\Delta \in K_{\ell}$ such that $\Gamma \prec \Delta$ and $C, {\sim}D \in \Delta$. 
	\item If $\ell \vdash \I{2}, \I{3}$, then there exists $\Delta \in K_{\ell}$ such that $\Gamma \prec \Delta$ and $C, {\sim}D, \Box \lnot D \in \Delta$. 
\end{enumerate}
\end{lem}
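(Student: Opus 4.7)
The common template for all four parts is: using the axioms available in $\ell$, locate a box-formula $\Box F$ with $F \in \Phi$ that cannot belong to $\Gamma$, and then apply Lemma~\ref{lem0} to $\lnot \Box F \in \Gamma$ to obtain a $\Delta$ with $\Gamma \prec \Delta$ containing both ${\sim}F$ and $\Box F$. In each case the choice of $F$ is engineered so that the propositional content of $\lnot F$ delivers exactly the formulas demanded of $\Delta$, and so that the relevant axiom schema of $\ell$, together with $\uI C \in \Gamma$, rules out $\Box F \in \Gamma$ by forcing the contradictory conclusion $\uI D \in \Gamma$. Membership of $F$ in $\Phi$ is in each case supplied by the third clause of the adequate-set definition.

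Part~1 requires no $\prec$-successor, so it suffices to verify that $\{C, {\sim}D\}$ is $\ell$-consistent and then take a $\Phi$-maximal extension. Consistency follows from the rule $\uR$: if $\ell \vdash C \to D$, then $\ell \vdash \uI C \to \uI D$, contradicting $\uI C, \lnot \uI D \in \Gamma$. For Part~3, I take $F := C \to D$. Then $\Box F \in \Phi$ by the adequate-set clause, and $\I{2}$ gives $\Box(C \to D) \to (\uI C \to \uI D)$, so $\Box F \in \Gamma$ would yield $\uI D \in \Gamma$ by $\Phi$-maximality, contradicting $\lnot \uI D \in \Gamma$. Lemma~\ref{lem0} then returns $\Delta$ with $\Gamma \prec \Delta$ and $\lnot(C \to D), \Box(C \to D) \in \Delta$; since $\lnot(C \to D)$ propositionally yields $C$ and $\lnot D$, $\Phi$-maximality delivers $C, {\sim}D \in \Delta$.

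Parts~2 and~4 follow the same script with different choices of $F$. In Part~2 I take $F := C \to \Diamond D$, so that $\lnot F$ propositionally equals $C \land \Box \lnot D$; $\uJ{25}$ rules out $\Box F \in \Gamma$ exactly as $\I{2}$ did above, and Lemma~\ref{lem0} then yields a $\Delta$ with $\Gamma \prec \Delta$ and $C, \Box \lnot D \in \Delta$. In Part~4 I take $F := C \to (D \lor \Diamond D)$; if $\Box F \in \Gamma$, then $\I{2}$ first gives $\ell \vdash \bigwedge \Gamma \to \uI(D \lor \Diamond D)$, and then $\I{3}$ gives $\ell \vdash \bigwedge \Gamma \to \uI D$, whence $\uI D \in \Gamma$, a contradiction. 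The resulting $\Delta$ contains $C$, $\lnot D$, and $\Box \lnot D$, which $\Phi$-maximality upgrades to $C, {\sim}D, \Box \lnot D \in \Delta$. I expect the main subtlety to lie in Part~2: the key trick is to package $\Box \lnot D$ as $\lnot \Diamond D$ inside the single negated box-formula $\lnot(C \to \Diamond D)$, so that a single appeal to Lemma~\ref{lem0} secures $\Gamma \prec \Delta$ together with both $C \in \Delta$ and $\Box \lnot D \in \Delta$ simultaneously.
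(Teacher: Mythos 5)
Your proposal is correct and follows essentially the same route as the paper: part 1 by showing $\{C,{\sim}D\}$ is $\ell$-consistent via $\uR$, and parts 2--4 by showing that $\Box(C\to\Diamond D)$, $\Box(C\to D)$, and $\Box(C\to D\lor\Diamond D)$ respectively cannot lie in $\Gamma$ and then invoking Lemma~\ref{lem0}. The only cosmetic difference is that you additionally note the $\Box F$ component supplied by Lemma~\ref{lem0} and make the adequate-set membership explicit, neither of which changes the argument.
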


\begin{proof}
1: It suffices to show that the following set $X$ is consistent with $\ell$. 
\[
X := \{C, {\sim}D\}. 
\]
Towards a contradiction, we suppose $X$ is inconsistent with $\ell$. 
Then, 
\begin{eqnarray*}
\ell &\vdash& C \to D,\\
\ell &\vdash& \uI C \to \uI D,\\
\ell &\vdash& \bigwedge \Gamma \to \uI D. 
\end{eqnarray*}
This contradicts $\lnot \uI D \in \Gamma$. 

2: Assume $\ell \vdash \uJ{25}$. 
If $\Box (C \to \Diamond D) \in \Gamma$, then $\ell \vdash \bigwedge \Gamma \to (\uI C \to \uI D)$ by $ \uJ{25}$, and hence $\uI D \in \Gamma$. 
Thus, $\Box (C \to \Diamond D) \notin \Gamma$. 
By Lemma~\ref{lem0}, there exists $\Delta \in K_{\ell}$ such that $\Gamma \prec \Delta$ and ${\sim}(C \to \Diamond D) \in \Delta$. 
Hence, $C, \Box \lnot D \in \Delta$. 

3: Assume $\ell \vdash \I{2}$. 
If $\Box (C \to D) \in \Gamma$, then $\ell \vdash \bigwedge \Gamma \to (\uI C \to \uI D)$ by $\I{2}$, and hence $\uI D \in \Gamma$. 
Thus, $\Box (C \to D) \notin \Gamma$. 
By Lemma~\ref{lem0}, there exists $\Delta \in K_{\ell}$ such that $\Gamma \prec \Delta$ and ${\sim}(C \to D) \in \Delta$. 
Hence, $C, {\sim}D \in \Delta$. 

4: Assume $\ell \vdash \I{2}, \I{3}$. 
If $\Box (C \to D \lor \Diamond D) \in \Gamma$, then $\ell \vdash \bigwedge \Gamma \to (\uI C \to \uI D)$ by $\I{2}$ and $\I{3}$, and hence $\uI D \in \Gamma$. 
Thus, $\Box (C \to D \lor \Diamond D) \notin \Gamma$. 
By Lemma~\ref{lem0}, there exists $\Delta \in K_{\ell}$ such that $\Gamma \prec \Delta$ and ${\sim}(C \to D \lor \Diamond D) \in \Delta$. 
Hence, $C, {\sim}D, \Box \lnot D \in \Delta$. 
\end{proof}

\begin{lem}\label{lem3}
Let $\ell$ be any logic having $\I{4}$ and let $\Gamma, \Delta \in K_{\ell}$. 
If $\uI C \in \Gamma$ and $\Gamma \prec \Delta$, then there exists $\Theta \in K_{\ell}$ such that $\Gamma \prec \Theta$ and $C \in \Theta$. 
\end{lem}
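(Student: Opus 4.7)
The plan is to reduce the claim to Lemma~\ref{lem0} applied to the formula $\neg C$: I will establish $\neg \Box \neg C \in \Gamma$, after which Lemma~\ref{lem0} produces $\Theta \in K_{\ell}$ with $\Gamma \prec \Theta$ and ${\sim}(\neg C), \Box \neg C \in \Theta$, and the identity ${\sim}(\neg C) = C$ delivers $C \in \Theta$ as required.

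To get $\neg \Box \neg C \in \Gamma$, I would first extract $\neg \Box \bot \in \Gamma$ from the hypothesis $\Gamma \prec \Delta$: the clause $\{B, \Box B : \Box B \in \Gamma\} \subseteq \Delta$ forces $\Box \bot \notin \Gamma$ (otherwise $\bot \in \Delta$, contradicting consistency of $\Delta$), and since $\Box \bot \in \Phi$ by adequacy, $\Phi$-maximality of $\Gamma$ puts $\neg \Box \bot$ into $\Gamma$. Instantiating $\I{4}$ gives $\ell \vdash \uI C \land \neg \Box \bot \to \neg \Box \neg C$, and using $\uI C, \neg \Box \bot \in \Gamma$ I obtain $\ell \vdash \bigwedge \Gamma \to \neg \Box \neg C$.

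The only remaining step is to verify $\Box \neg C \in \Phi$, which is the slightly delicate ingredient. From $\uI C \in \Gamma \subseteq \Phi$ I have $C \in \Phi_{\uI}$, so the third adequacy clause puts $\Box(C \lor \Diamond C)$ into $\Phi$, and closure under subformulas then brings $\Diamond C$ into $\Phi$; since $\Diamond C$ is literally $\neg \Box \neg C$, also $\Box \neg C \in \Phi$. Together with ${\sim}$-closure this gives $\neg \Box \neg C \in \Phi$, so $\Phi$-maximality of $\Gamma$ combined with the provability established above forces $\neg \Box \neg C \in \Gamma$. Lemma~\ref{lem0} applied to $\neg C$ then yields the desired $\Theta$.

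The main obstacle is precisely the membership check $\Box \neg C \in \Phi$: the argument hinges on reading $\Diamond$ as the literal abbreviation $\neg \Box \neg$, so that closure of $\Phi$ under subformulas of the adequacy-provided formula $\Box(C \lor \Diamond C)$ pulls $\Box \neg C$ into $\Phi$. Once this is in hand, $\I{4}$ is used exactly once, and the reduction to Lemma~\ref{lem0} is immediate.
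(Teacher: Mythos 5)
Your proposal is correct and follows essentially the same route as the paper: the paper likewise uses $\I{4}$ together with $\uI C \in \Gamma$ to show that $\Box\lnot C \in \Gamma$ would force $\ell \vdash \bigwedge\Gamma \to \Box\bot$, rules this out via $\Gamma \prec \Delta$, and then invokes Lemma~\ref{lem0} on $\lnot C$ to produce $\Theta$. Your version merely spells out the membership checks ($\Box\bot, \Box\lnot C \in \Phi$) that the paper leaves implicit.
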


\begin{proof} 
Since $\ell \vdash \bigwedge \Gamma \to (\Diamond \top \to \Diamond C)$ by $\uI C \in \Gamma$ and $\I{4}$, if $\Box \lnot C \in \Gamma$, then $\ell \vdash \bigwedge \Gamma \to \Box \bot$. 
Since $\Gamma \prec \Delta$, we have $\Box \lnot C \notin \Gamma$. 
By Lemma~\ref{lem0}, there exists $\Theta \in K_{\ell}$ such that $\Gamma \prec \Theta$ and $C \in \Theta$. 
\end{proof}

\subsection{The unary interpretability logics for the sublogics of $\IL$ which are complete with respect to $\IL^-$-frames}\label{SubSec:MCwrtVF}

In this subsection, we consider the twelve sublogics of $\IL$ which are complete with respect to $\IL^- \!$-frames. 
Firstly, we give a proof of the implication $\Longrightarrow$ in Theorem~\ref{MainTheorem} for the ten logics $\IL^-$, $\IL^-(\J{5})$, $\IL^-(\J{1})$, $\IL^{-}(\J{4}_{+})$, $\IL^{-}(\J{4}_{+}, \J{5})$, $\IL^{-}(\J{2}_{+})$, $\IL^-(\J{1}, \J{5})$, $\IL^-(\J{1}, \J{4}_{+})$, $\CL$, and $\IL^-(\J{1}, \J{4}_{+}, \J{5})$.

\begin{thm}\label{ilcomp}
Let $L \in $$\{\IL^-(\J{5})$, $\IL^-(\J{1})$, $\IL^{-}(\J{4}_{+}, \J{5})$, $\IL^{-}(\J{2}_{+})$, $\IL^-(\J{1}, \J{5})$, $\CL$, $\IL^-(\J{1}, \J{4}_{+}, \J{5})\}$ and let $\ell$ be the following logic; 
\begin{eqnarray*}
\ell := \left\{
\begin{array}{ll}
\il^- & \text{if} \, \, L = \IL^-(\J{5}),\\
\il^-(\uJ{1}) & \text{if} \, \, L = \IL^-(\J{1}),\\
\il^-(\I{2}) & \text{if} \, \, L = \IL^{-}(\J{4}_{+}, \J{5}) \, \, \text{or} \, \, L = \IL^{-}(\J{2}_{+}),\\
\il^-(\uJ{15}) & \text{if} \, \, L = \IL^-(\J{1}, \J{5}),\\
\il^-(\uJ{1}, \I{2}) & \text{if} \, \, L = \CL,\\
\il^-(\uJ{15}, \I{2}) & \text{if} \, \, L = \IL^-(\J{1}, \J{4}_{+}, \J{5}).
\end{array}
\right.
\end{eqnarray*}
For any $\mathcal{L}(\Box, \uI)$-formula $A$, if $L \vdash A$, then $\ell \vdash A$. 
\end{thm}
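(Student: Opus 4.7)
My plan is to prove this theorem by contraposition, following the strategy announced at the start of Section~\ref{Sec:MThm}: assuming $\ell \nvdash A$, I will construct a finite $\IL^-\!$-frame in which $A$ is refuted and every axiom of $L$ is valid, and then invoke the completeness of $L$ with respect to finite $\IL^-\!$-frames (Fact~\ref{comj15}) to conclude $L \nvdash A$. The same overall construction works for all seven cases; only the choice of auxiliary witnesses and the structural closure imposed on $S_\Gamma$ vary with the specific axioms present in $\ell$ and $L$.

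First I would fix a finite adequate set $\Phi$ containing $A$, set $W := K_\ell$, and take $R := \prec$, which on the finite $W$ is automatically transitive and conversely well-founded. Since $\ell \nvdash A$, there is some $\Gamma_0 \in W$ with ${\sim} A \in \Gamma_0$; this $\Gamma_0$ will be the point at which $A$ is falsified.

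The crucial step is to define the family $\{S_\Gamma\}_{\Gamma \in W}$ so as to force the truth lemma $\Gamma \Vdash B \Longleftrightarrow B \in \Gamma$ for every $B \in \Phi$. Only the case $B = \uI C$ requires design choices. When $\uI C \in \Gamma$, Lemma~\ref{lem2} supplies, for each $\lnot \uI D \in \Gamma$, a maximally consistent $\Theta$ with $C, {\sim} D \in \Theta$ (part~1 in the base case; part~3 when $\I{2} \in \ell$, which additionally secures $\Gamma \prec \Theta$ so that $S_\Gamma$-successors can be kept inside $R[\Gamma]$); such $\Theta$'s populate $S_\Gamma[\Delta]$ for suitable $\Delta$. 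When $\uI C \notin \Gamma$, the counter-witnessing $\Delta \in R[\Gamma]$ whose $S_\Gamma$-successors all avoid $C$ is produced by Lemma~\ref{lem1}: part~1 when $\ell$ contains neither $\uJ{1}$ nor $\uJ{15}$ (yielding $\Box \bot \in \Delta$, so that $\Delta$ has no further $R$-successors), part~2 when $\uJ{1} \in \ell$ but $\uJ{15} \notin \ell$ (forcing ${\sim} C \in \Delta$, compatible with the reflexive pair $(\Delta, \Delta)$ demanded by $\J{1}$), and part~3 when $\uJ{15} \in \ell$ (forcing both ${\sim} C$ and $\Box \lnot C$ in $\Delta$, compatible with the $\J{5}$-closure). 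To prevent the counter-witnesses for different $D$'s from interfering with one another, I would take tagged copies of maximally consistent sets, one copy per pair $(\Gamma, D)$, so that each counter-witness role is played by a fresh node. Finally, $S_\Gamma$ is closed under whatever extra structural pairs the axioms of $L$ demand via Fact~\ref{FC}: the pair $(\Delta, \Delta)$ for every $\Gamma R \Delta$ when $\J{1} \in L$, the pairs $(\Delta, \Theta)$ for every $\Gamma R \Delta R \Theta$ when $\J{5} \in L$, restriction of $S_\Gamma$-successors to $R[\Gamma]$ when $\J{4}_+ \in L$, and transitive closure when $\J{2}_+ \in L$.

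With this in place, the truth lemma is a routine induction on the complexity of $B \in \Phi$, the $\Box$-case being handled by Lemma~\ref{lem0} and the $\uI$-case by the design above; hence $\Gamma_0 \nVdash A$. The validity of each axiom of $L$ in the frame is then read off directly against Fact~\ref{FC}. The main obstacle will be the bookkeeping inside $S_\Gamma$: the counter-witnesses for different $\lnot \uI D$'s must not sabotage the existence requirements imposed by $\uI C \in \Gamma$, and simultaneously the extra pairs forced into $S_\Gamma$ by $\J{1}$, $\J{5}$, $\J{4}_+$, $\J{2}_+$ must not accidentally create an $S_\Gamma$-successor containing some $D$ that a counter-witness is meant to avoid. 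The devices that reconcile these demands are the use of tagged copies and, in the weakest cases, the clause in Lemma~\ref{lem1}.1 that places $\Box \bot$ inside $\Delta_{\Gamma, D}$, thereby stripping that node of $R$-successors and neutralising any downstream $\J{5}$- or $\J{2}_+$-closure at it.
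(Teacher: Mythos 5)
Your plan coincides with the paper's own proof: the same contrapositive strategy via Fact~\ref{comj15}, the same tagged-copy construction of worlds (the paper uses pairs $(\Gamma,B)$ with $B\in\Phi_{\uI}$), the same case split on Lemma~\ref{lem1}.1/2/3 according to whether $\ell$ has $\uJ{15}$, $\uJ{1}$, or neither, Lemma~\ref{lem2} for the positive $\uI$-case, and the same observation that $\Box\bot$ in the counterwitness neutralises the $\J{5}$- and $\J{2}_{+}$-closure. The only cosmetic difference is that you describe $S_{(\Gamma,B)}$ as a closure of required pairs while the paper defines it directly as the largest relation meeting the avoidance constraints (with tag-matching for $\J{2}_{+}$), which amounts to the same verification.
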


\begin{proof}

Suppose $\ell \nvdash A$. 
By Fact~\ref{comj15}, it suffices to show that there exist an $\IL^- \!$-model $(W, R, \{S_{x}\}_{x \in W}, \Vdash)$ in which all axiom schemata in $L$ are valid and $w \in W$ such that $w \nVdash A$. 
Let $\Phi$ be any finite adequate set including $A$ and let $\Gamma_{0} \in K_{\ell}$ such that ${\sim}A \in \Gamma_{0}$. 
Then, we define $\mathcal{F} := (W, R, \{S_{x}\}_{x \in W})$ as follows:
\begin{itemize}
	\item $W := \{(\Gamma, B): \Gamma \in K_{\ell}, B \in \Phi_{\uI}\}$; 
	\item $(\Gamma, B) {R} (\Delta, C) :\Longleftrightarrow \Gamma \prec \Delta$; 
	\item $(\Delta, C) {S_{(\Gamma, B)}} (\Theta, D) :\Longleftrightarrow \Gamma \prec \Delta$ and the following conditions that depend on $L$ hold. 
\begin{itemize}
	\item $L = \IL^-(\J{5})$: If $\lnot \uI C \in \Gamma$ and $\Box \lnot C \in \Delta$, then ${\sim}C \in \Theta$.
	\item $L = \IL^-(\J{1})$: If $\lnot \uI C \in \Gamma$ and ${\sim}C \in \Delta$, then ${\sim}C \in \Theta$.
	\item $L = \IL^{-}(\J{4}_{+}, \J{5})$: $\Gamma \prec \Theta$ and if $\lnot \uI C \in \Gamma$ and $\Box \lnot C \in \Delta$, then ${\sim}C \in \Theta$. 
	\item $L = \IL^{-}(\J{2}_{+})$: $\Gamma \prec \Theta$ and if $\lnot \uI C \in \Gamma$ then ${\sim}C \in \Theta$ and $C \equiv D$. 
	\item $L = \IL^-(\J{1}, \J{5})$: If $\lnot \uI C \in \Gamma$ and ${\sim}C, \Box \lnot C \in \Delta$, then ${\sim}C \in \Theta$. 
	\item $L = \CL$: $\Gamma \prec \Theta$ and if $\lnot \uI C \in \Gamma$ and ${\sim} C \in \Delta$, then ${\sim}C \in \Theta$ and $C \equiv D$. 
	\item $L = \IL^-(\J{1}, \J{4}_{+}, \J{5})$: $\Gamma \prec \Theta$ and if $\lnot \uI C \in \Gamma$ and ${\sim} C, \Box \lnot C \in \Delta$, then ${\sim}C \in \Theta$.
\end{itemize} 
\end{itemize}

Since $(\Gamma_{0}, \bot) \in W$ and $\Phi$ is finite, $W \neq \emptyset$ and $W$ is finite. 
Also, $R$ is irreflexive and transitive, and $(\Delta, C) {S_{(\Gamma, B)}} (\Theta, D)$ implies $(\Gamma, B) {R} (\Delta, C)$. 
Therefore, $\mathcal{F}$ is a finite $\IL^{-}$-frame. 
We show that all axiom schemata of $L$ are valid in $\mathcal{F}$. 
We distinguish the following four cases. 

\begin{itemize}
	\item $L = \IL^-(\J{5})$:  
We show that $\J{5}$ is valid in $\mathcal{F}$. 
Assume $(\Gamma, B) {R} (\Delta, C)$ and $(\Delta, C) {R} (\Theta, D)$. 
Then, $\Gamma \prec \Delta$. 
Moreover, if $\lnot \uI C \in \Gamma$ and $\Box \lnot C \in \Delta$, then ${\sim}C \in \Theta$ because $\Delta \prec \Theta$. 
Thus, $(\Delta, C) {S_{(\Gamma, B)}} (\Theta, D)$. 
By Fact \ref{FC}.4, $\J{5}$ is valid in $\mathcal{F}$. 

	\item $L = \IL^-(\J{1})$:  
We show that $\J{1}$ is valid in $\mathcal{F}$. 
Assume $(\Gamma, B) {R} (\Delta, C)$. 
Then, we have $(\Delta, C) {S_{(\Gamma, B)}} (\Delta, C)$ by the definition of $S_{(\Gamma, B)}$. 
By Fact \ref{FC}.1,  $\J{1}$ is valid in $\mathcal{F}$. 

	\item $L = \IL^-(\J{2}_{+})$: 
We show that $\J{2}_{+}$ is valid in $\mathcal{F}$. 
If $(\Delta, C) {S_{(\Gamma, B)}} (\Theta, D)$, then $(\Gamma, B) {R} (\Theta, D)$ by the definition of  $S_{(\Gamma, B)}$. Thus, $\J{4}_{+}$ is valid in $\mathcal{F}$ by Fact~\ref{FC}.3.
Suppose $(\Delta_{1}, C_{1}) {S_{(\Gamma, B)}} (\Delta_{2}, C_{2})$ and $(\Delta_{2}, C_{2}) {S_{(\Gamma, B)}} (\Delta_{3}, C_{3})$. 
Then, we have $\Gamma \prec \Delta_{1}$ and $\Gamma \prec \Delta_{3}$. 
Also, if $\lnot \uI C_{1} \in \Gamma$, then $C_{1} \equiv C_{2}$, and hence we obtain ${\sim} C_{1} \in \Delta_{3}$ and $C_{1} \equiv C_{3}$. 
Therefore, we obtain $(\Delta_{1}, C_{1}) {S_{(\Gamma, B)}} (\Delta_{3}, C_{3})$. 
By Fact \ref{FC}.2,  $\J{2}_{+}$ is valid in $\mathcal{F}$. 

	\item Other cases are proved in a similar way as above. 
\end{itemize}

Let $(W, R, \{S_{x}\}_{x \in W}, \Vdash)$ be an $\IL^-$-model  with 
\[
(\Gamma, B) \Vdash p \Longleftrightarrow p \in \Gamma
\] 
for any $(\Gamma, B) \in W$. 
We prove the following claim. 

\begin{cl}[Truth Lemma]
Let $A' \in \Phi$ and let $(\Gamma, B) \in W$. Then, 
\[
A' \in \Gamma \Longleftrightarrow (\Gamma, B) \Vdash A'.
\]
\end{cl}

\begin{proof}
We prove by induction on the construction of $A'$. 
We only prove the cases $A' \equiv \Box C$ and $A' \equiv \uI C$. 

\begin{itemize}
	\item[Case~1]: $A' \equiv \Box C$.

($\Longrightarrow$): Suppose $\Box C \in \Gamma$. 
Let $(\Delta, D) \in W$ such that $(\Gamma, B) {R} (\Delta, D)$. 
Then, $\Gamma \prec \Delta$, and hence $C \in \Delta$. 
By induction hypothesis, $(\Delta, D) \Vdash C$. 
Therefore, $(\Gamma, B) \Vdash \Box C$

($\Longleftarrow$): Suppose $\Box C \notin \Gamma$. 
By Lemma~\ref{lem0}, there exists $\Delta \in K_{\ell}$ such that $\Gamma \prec \Delta$ and ${\sim}C \in \Delta$. 
Then, we obtain $(\Delta, \bot) \in W$, $(\Gamma, B) {R} (\Delta, \bot)$, and $(\Delta, \bot) \nVdash C$ by induction hypothesis. 
Therefore, $(\Gamma, B) \nVdash \Box C$.

	\item[Case~2]:  $A' \equiv \uI C$. 

($\Longrightarrow$): Suppose $\uI C \in \Gamma$. 
We show $(\Gamma, B) \Vdash \uI C$. 
Let $(\Delta, D) \in W$ such that $(\Gamma, B) {R} (\Delta, D)$. 
We distinguish the following two cases. 
\begin{itemize}
	\item Suppose $\lnot \uI D \in \Gamma$. 
Since $\uI C \in \Gamma$,  there exists $\Theta \in K_{\ell}$ such that $C, {\sim} D \in \Theta$ by Lemma \ref{lem2}.1. 
Moreover, if $\ell$ has $\I{2}$, then $\Gamma \prec \Theta$ holds by Lemma \ref{lem2}.3. 
Therefore, we obtain $(\Delta, D) {S_{(\Gamma, B)}} (\Theta, D)$. 

	\item Otherwise. 
Since $\Gamma \prec \Delta$, we have $\Box \bot \notin \Gamma$. 
Hence, $\uI \bot \notin \Gamma$ by $\uJ{6}$. 
Since $\uI C \in \Gamma$, there exists $\Theta \in K_{\ell}$ such that $C \in \Theta$ by Lemma \ref{lem2}.1. 
Moreover, if $\ell$ has $\I{2}$, then $\Gamma \prec \Theta$ holds by Lemma \ref{lem2}.3. 
Therefore, we obtain $(\Delta, D) {S_{(\Gamma, B)}} (\Theta, D)$. 
\end{itemize}
In any case, $(\Theta, D) \Vdash C$ by induction hypothesis, and hence $(\Gamma, B) \Vdash \uI C$. 

($\Longleftarrow$): Suppose $\uI C \notin \Gamma$. 
We show $(\Gamma, B) \nVdash \uI C$. 
We take $\Delta \in K_{\ell}$ for each of the following three cases. 
\begin{itemize}
	\item $\ell$ has $\uJ{15}$: 
There exists $\Delta \in K_{\ell}$ such that $\Gamma \prec \Delta$ and ${\sim} C, \Box \lnot C \in \Delta$ by Lemma \ref{lem1}.3. 

	\item $\ell$ has $\uJ{1}$: 
There exists $\Delta \in K_{\ell}$ such that $\Gamma \prec \Delta$ and $ {\sim} C \in \Delta$ by Lemma \ref{lem1}.2. 

	\item Otherwise: 
Since $\uI C \notin \Gamma$, there exists $\Delta \in K_{\ell}$ such that $\Gamma \prec \Delta$ and $\Box \bot \in \Delta$ by Lemma \ref{lem1}.1, and hence $\Box \lnot C \in \Delta$. 
\end{itemize}

In any cases, we have $(\Gamma, B) {R} (\Delta, C)$. 
Let $(\Theta, E) \in W$ such that $(\Delta, C) {S_{(\Gamma, B)}} (\Theta, E)$. 
Then, ${\sim}C \in \Theta$ by the definition of $S_{(\Gamma, B)}$, and hence $(\Theta, E) \nVdash C$ by induction hypothesis. 
Therefore, we obtain $(\Gamma, B) \nVdash \uI D$. \qedhere
\end{itemize}
\end{proof}

Since $(\Gamma_{0}, \bot) \in W$ and ${\sim}A \in \Gamma_{0}$, we obtain $(\Gamma_{0}, \bot) \nVdash A$ by Claim 1. 
\end{proof}

By Theorem~\ref{ilcomp} and the implication $\Longleftarrow$ in Thenorem~\ref{MainTheorem}, we obtain the following corollary. 

\begin{cor}\label{cor1}
Let $A$ be any $\mathcal{L}(\Box, \uI)$-formula. 
\begin{enumerate}
	\item Let $L$ be any logic such that $\IL^- \subseteq L \subseteq \IL^{-}(\J{5})$. 
Then, $L \vdash A$ iff $\il^- \vdash A$. 
	\item Let $L$ be any logic such that $\IL^-(\J{4}_{+}) \subseteq L \subseteq \IL^-(\J{4}_{+}, \J{5})$ or $\IL^-(\J{4}_{+}) \subseteq L \subseteq \IL^-(\J{2}_{+})$. 
Then, $L \vdash A$ iff $\il^-(\I{2}) \vdash A$. 
	\item Let $L$ be any logic such that $\IL^-(\J{1}, \J{4}_{+}) \subseteq L \subseteq \CL$. 
Then, $L \vdash A$ iff $\il^-(\uJ{1}, \I{2}) \vdash A$. 
\end{enumerate}
\end{cor}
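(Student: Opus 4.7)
The corollary follows from combining the implication $\Longrightarrow$ of Theorem~\ref{ilcomp} at the top of each sandwich with the implication $\Longleftarrow$ underwritten by Proposition~\ref{sound} at the bottom; monotonicity of provability in the logic does the rest, and no fresh semantic construction is needed.

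For the $\Longrightarrow$ direction in each of the three parts I simply pass to the upper bound $L^{\max}$ of the given sandwich. In part~1 take $L^{\max} = \IL^-(\J{5})$; in part~2 take $L^{\max} = \IL^-(\J{4}_{+}, \J{5})$ or $\IL^-(\J{2}_{+})$ according to which of the two permitted ranges contains $L$; in part~3 take $L^{\max} = \CL$. Then $L \subseteq L^{\max}$ together with $L \vdash A$ gives $L^{\max} \vdash A$, and Theorem~\ref{ilcomp} yields $\ell \vdash A$ for the intended $\ell$. In part~2, both admissible choices of $L^{\max}$ land on the same $\ell = \il^-(\I{2})$, so the disjunction in the hypothesis causes no trouble.

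For the $\Longleftarrow$ direction I verify that every axiom schema and every inference rule of $\ell$ is derivable in the lower bound $L^{\min}$ of the sandwich: $\IL^-$ in part~1, $\IL^-(\J{4}_{+})$ in part~2, and $\IL^-(\J{1}, \J{4}_{+})$ in part~3. The tautologies $\G{1}'$ in the unary language are $\IL^-$-tautologies via the abbreviation $\uI A \equiv \top \rhd A$; the schema $\uJ{6}$ comes from Proposition~\ref{sound}.2, the rule $\uR$ from Proposition~\ref{sound}.1 (each $L^{\min}$ is closed under $\R{1}$), the schema $\uJ{1}$ from Proposition~\ref{sound}.3, and the schema $\I{2}$ from Proposition~\ref{sound}.5. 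Feeding these through each part yields $\ell \vdash A \Longrightarrow L^{\min} \vdash A \Longrightarrow L \vdash A$. There is no substantive obstacle; the proof is a short bookkeeping exercise, the only mild care being the bifurcation in part~2, which vanishes because both branches deliver the same unary target.
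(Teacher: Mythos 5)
Your proposal is correct and matches the paper's own argument: the paper likewise obtains the corollary by combining Theorem~\ref{ilcomp} applied to the top logic of each sandwich with the $\Longleftarrow$ direction of Theorem~\ref{MainTheorem} (i.e.\ Proposition~\ref{sound}) applied to the bottom logic, together with monotonicity of provability. Your explicit verification that $\uJ{6}$, $\uR$, $\uJ{1}$, and $\I{2}$ are available in the respective lower bounds is exactly the content the paper delegates to Proposition~\ref{sound}.
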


Next, we consider the remaining two logics $\IL^-(\J{2}_{+}, \J{5})$ and $\IL$. 

\begin{rem}
De Rijke already proved that $\il$ is the unary interpretability logic of $\IL$. 
In this paper, we give another proof that is slightly different from de Rijke's one.
\end{rem}

We define several notations. 
Let $\tau$ and $\sigma$ be any sequences of $\mathcal{L}(\Box. \uI)$-formulas. 
$\tau \subseteq \sigma$ denotes that $\tau$ is an initial segment of $\sigma$. 
$\tau \subset \sigma$ denotes that $\tau \subseteq \sigma$ and $\tau \neq \sigma$. 
$|\tau|$ is the length of $\tau$. 
$\tau * \seq{A}$ is the sequence obtained from $\tau$ by concatenating $A$ at the last element. 
For example, $\seq{A, B} \subset \seq{A, B, C}$, $|\seq{A, B, C}| = 3$ and $\seq{A, B, C}*\seq{D} = \seq{A, B, C, D}$. 

\begin{thm}\label{ilcomp2a5}
Let $L \in \{\IL^-(\J{2}_{+}, \J{5}) , \IL\}$ and let $\ell$ be the following logic; 
\begin{eqnarray*}
\ell := \left\{
\begin{array}{ll}
\il^-(\I{2}, \I{3}) & \text{if} \, \, L = \IL^-(\J{2}_{+}, \J{5}),\\
\il^-(\uJ{1}, \I{2}, \I{3}) & \text{if} \, \, L = \IL. 
\end{array}
\right.
\end{eqnarray*}
For any $\mathcal{L}(\Box, \uI)$-formula $A$, if $L \vdash A$, then $\ell \vdash A$. 
\end{thm}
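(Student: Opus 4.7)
The plan is to mimic the proof of Theorem \ref{ilcomp}, constructing a finite $\IL^-$-model that refutes $A$ and in which all axiom schemata of $L$ are valid. By Fact \ref{comj15}, this yields $L \nvdash A$. The essentially new ingredient is the handling of $\I{3}$; for this I replace the single-formula second coordinate of worlds used in Theorem \ref{ilcomp} by a \emph{sequence} $\tau$ of formulas from $\Phi_{\uI}$, as anticipated by the sequence notation introduced just above the statement.

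First, fix a finite adequate set $\Phi$ containing $A$ and $\Gamma_{0} \in K_{\ell}$ with ${\sim}A \in \Gamma_{0}$. Worlds are pairs $(\Gamma, \tau)$ with $\Gamma \in K_{\ell}$ and $\tau$ a sequence of elements of $\Phi_{\uI}$ of bounded length (chosen large enough that every witnessing requirement can be threaded through, yet small enough to keep $W$ finite). The $R$-relation is $(\Gamma, \tau) R (\Delta, \sigma) :\Longleftrightarrow \Gamma \prec \Delta$ and $\tau \subseteq \sigma$, which is transitive and conversely well-founded. The $S_{(\Gamma, \tau)}$-relation is defined in analogy with the $\CL$-clause of Theorem \ref{ilcomp}, with two modifications: (i)~the sequence $\rho$ of an $S$-successor $(\Theta, \rho)$ extends $\sigma$ by the formula currently being witnessed, so that transitivity of $S_{(\Gamma, \tau)}$ is enforced and trackable; (ii)~when $\lnot \uI C \in \Gamma$ and ${\sim}C, \Box \lnot C \in \Delta$, we demand that ${\sim}C, \Box \lnot C \in \Theta$, not merely ${\sim}C \in \Theta$. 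For $L = \IL$, one additionally imposes the reflexivity clause forced by $\J{1}$, following the $\CL$ case of Theorem \ref{ilcomp}.

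I would then verify the frame conditions using Fact \ref{FC}. Condition~3 ($\J{4}_{+}$) holds because $\Gamma \prec \Theta$ is built into $S$. Condition~2 ($\J{2}_{+}$) follows from propagation of ${\sim}C$ and $\Box \lnot C$ along $\prec$, together with the extending-tail clause on sequences, which yields transitivity of $S_{(\Gamma, \tau)}$. Condition~4 ($\J{5}$) holds because whenever $(\Gamma, \tau) R (\Delta, \sigma) R (\Theta, \rho)$, one has $\Gamma \prec \Theta$ and $\tau \subseteq \rho$, and $\Box \lnot C \in \Delta$ forces $\lnot C \in \Theta$. The truth lemma is then proved by induction on $A' \in \Phi$; the only nontrivial case is $A' \equiv \uI C$. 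For $\uI C \in \Gamma$, Lemma~\ref{lem2}.4 (which is where $\I{2}$ and $\I{3}$ are used) produces a $\Theta \in K_{\ell}$ with $\Gamma \prec \Theta$ and $C, {\sim}D, \Box \lnot D \in \Theta$, and we extend $\sigma$ by $D$ to define the required $\rho$; for $\uI C \notin \Gamma$, Lemma~\ref{lem1}.1 (respectively Lemma~\ref{lem1}.2 in the $\IL$ case, exploiting $\uJ{1}$) produces a $\Delta$ whose $S_{(\Gamma, \tau)}$-successors are all pinned to contain ${\sim}C$.

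The main technical obstacle is the tension between the $\J{5}$-clause of the frame---every $R$-chain of length two must be an $S$-step---and the converse truth-lemma clause: when $\lnot \uI C \in \Gamma$, every $S$-successor of an $R$-successor of $(\Gamma, \tau)$ must fail $C$, including those forced on us by $\J{5}$. This is resolved precisely by ensuring that the $\Box \lnot C$ information delivered by Lemma~\ref{lem1} propagates along $\prec$ and is preserved in the sequence coordinate, so that every world reachable from the chosen $(\Delta, \sigma)$ via $R$ (hence via $S_{(\Gamma, \tau)}$ by $\J{5}$) is pinned to satisfy $\lnot C$. Calibrating the sequence-length bound and the exact extending clause in the definition of $S$ is the only delicate piece of bookkeeping; once that is set up, the induction goes through routinely and $(\Gamma_{0}, \seq{}) \nVdash A$, completing the proof.
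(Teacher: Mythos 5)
Your construction is essentially the paper's: worlds are pairs $(\Gamma,\tau)$ with $\tau$ a sequence over $\Phi_{\uI}$, the $S$-relation is a conditional clause propagating ${\sim}C$ and $\Box\lnot C$ anchored to a tail $\tau*\seq{C}$, Lemma~\ref{lem2}.4 supplies the witnesses for $\uI C\in\Gamma$, and Fact~\ref{FC} verifies $\J{2}_{+}$ and $\J{5}$ (and $\J{1}$ for $\IL$ via strengthening the hypothesis of the conditional, as in the $\CL$ case of Theorem~\ref{ilcomp}). However, the two details you defer as bookkeeping are precisely where the proof can fail. Concerning the length bound: a uniform bound on $|\tau|$ does not work, because the truth lemma must hold at \emph{every} world of $W$, and at a world whose sequence already has maximal length but whose $\Gamma$ still has $\prec$-successors, the $\Longleftarrow$ directions for $\Box$ and $\uI$ have no room to append a formula. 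The paper's calibration is $\rank(\Gamma)+|\tau|\leq\rank(\Gamma_0)$ with $\rank(\Gamma):=\sup\{\rank(\Delta)+1:\Gamma\prec\Delta\}$; since every witness $\Theta$ produced by Lemmas~\ref{lem1} and~\ref{lem2} satisfies $\Gamma\prec\Theta$ and hence $\rank(\Theta)<\rank(\Gamma)$, one can always append exactly one formula and stay inside $W$. Relatedly, the extension clause on $S$-successors must be anchored to $\tau*\seq{C}$ (the condition is $\tau*\seq{C}\subseteq\rho$), not to $\sigma$ as you write: requiring $\rho$ to extend $\sigma$ by a formula forces $|\rho|\geq|\tau|+2$, and the rank bookkeeping no longer closes.

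Second, in the $\Longleftarrow$ direction of the truth lemma for $L=\IL$ you invoke Lemma~\ref{lem1}.2, which yields $\Delta$ with only ${\sim}C\in\Delta$. That is not enough. The hypothesis of the $S_{(\Gamma,\tau)}$-conditional in the $\IL$ case must include $\Box\lnot C\in\Delta$ (otherwise $\J{5}$ fails in the frame, since ${\sim}C$ alone does not propagate along $\prec$), so the refuting world $\Delta$ must itself satisfy $\Box\lnot C$ in order to trigger that conditional and pin every $S_{(\Gamma,\tau)}$-successor of $(\Delta,\tau*\seq{C})$ to contain ${\sim}C$. You therefore need Lemma~\ref{lem1}.3 rather than Lemma~\ref{lem1}.2; this is available because $\il^-(\uJ{1},\I{2},\I{3})\vdash\uJ{15}$ by Proposition~\ref{relation}.1. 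With these two repairs your argument coincides with the paper's proof.
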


\begin{proof}

Suppose $\ell \nvdash A$. 
We find an $\IL^- \!$-model $(W, R, \{S_{x}\}_{x \in W}, \Vdash)$ in which all axiom schemata in $L$ are valid and $w \in W$ with $w \nVdash A$. 
Let $\Phi$ be any finite adequate set including $A$ and let $\Gamma_{0} \in K_{\ell}$ with ${\sim}A \in \Gamma_{0}$. 
For each $\Gamma \in K_{\ell}$, we define $\rank(\Gamma) := \sup\{\rank(\Delta) + 1 : \Gamma \prec \Delta\}$ where $\sup \emptyset = 0$. 
Then, we define $\mathcal{F} = (W, R, \{S_{x}\}_{x \in W})$ as follows:
\begin{itemize}
	\item $W := \{(\Gamma, \tau): \Gamma \in K_{\ell}$ and $\tau$ is a finite sequence of elements of $\Phi_{\uI}$ with $\rank(\Gamma) + |\tau| \leq \rank(\Gamma_{0})\}$; 
	\item $(\Gamma, \tau) {R} (\Delta, \sigma) :\Longleftrightarrow \Gamma \prec \Delta$ and $\tau \subset \sigma$; 
	\item $(\Delta, \sigma) {S_{(\Gamma, \tau)}} (\Theta, \rho) :\Longleftrightarrow (\Gamma, \tau) {R} (\Delta, \sigma)$, $(\Gamma, \tau) {R} (\Theta, \rho)$, and the following conditions that depend on $L$ hold. 
\begin{itemize}
	\item $L = \IL^-(\J{2}_{+}, \J{5})$: If $\lnot \uI C \in \Gamma$, $\Box \lnot C \in \Delta$, and $\tau*\seq{C} \subseteq \sigma$, then ${\sim}C, \Box \lnot C \in \Theta$ and $\tau*\seq{C} \subseteq \rho$. 
	\item $L = \IL$:  If $\lnot \uI C \in \Gamma$, ${\sim}C, \Box \lnot C \in \Delta$, and $\tau*\seq{C} \subseteq \sigma$, then ${\sim}C, \Box \lnot C \in \Theta$ and $\tau*\seq{C} \subseteq \rho$. 
\end{itemize} 
\end{itemize}

Since $(\Gamma_{0}, \epsilon) \in W$ where $\epsilon$ is the empty sequence, we have $W \neq \emptyset$. 
Thus, $\mathcal{F}$ is obviously a finite $\IL^{-}$-frame. 
We show that all axiom schemata of $L$ are valid in $\mathcal{F}$. 
In any case, $\J{4}_{+}$ is obviously valid in $\mathcal{F}$ by Fact~\ref{FC}.3. 
We distinguish the following two cases. 

\begin{itemize}
	\item $L = \IL^-(\J{2}_{+}, \J{5})$:  
Firstly, we show that $\J{2}_{+}$ is valid in $\mathcal{F}$.  
Assume $(\Delta_{1}, \sigma_{1}) {S_{(\Gamma, \tau)}} (\Delta_{2}, \sigma_{2})$ and $(\Delta_{2}, \sigma_{2}) {S_{(\Gamma, \tau)}} (\Delta_{3}, \sigma_{3})$. 
Then, $(\Gamma, \tau) {R} (\Delta_{1}, \sigma_{1})$ and $(\Gamma, \tau) {R} (\Delta_{3}, \sigma_{3})$ hold. 
Also, if $\lnot \uI C \in \Gamma$, $\Box \lnot C \in \Delta_{1}$, and $\tau*\seq{C} \subseteq \sigma_{1}$, then we obtain ${\sim}C, \Box \lnot C \in \Delta_{2}$ and $\tau*\seq{C} \subseteq \sigma_{2}$, and hence ${\sim}C, \Box{\sim}C \in \Delta_{3}$ and $\tau*\seq{C} \subseteq \sigma_{3}$. 
Thus, we obtain $(\Delta_{1}, \sigma_{1}) {S_{(\Gamma, \tau)}} (\Delta_{3}, \sigma_{3})$. 
By Fact \ref{FC}.2, $\J{2}_{+}$ is valid in $\mathcal{F}$.  

Secondly, we show that $\J{5}$ is valid in $\mathcal{F}$. 
Assume $(\Gamma, \tau) {R} (\Delta, \sigma)$ and $(\Delta, \sigma) {R} (\Theta, \rho)$. 
Then, $(\Gamma, \tau) {R} (\Theta, \rho)$. 
If $\lnot \uI C \in \Gamma$, $\Box \lnot C \in \Delta$, and $\tau*\seq{C} \subseteq \sigma$, 
then ${\sim}C, \Box \lnot C \in \Theta$ and $\tau*\seq{C} \subseteq \rho$ because $\Delta \prec \Theta$ and $\sigma \subset \rho$. 
Therefore, $(\Delta, \sigma) {S_{(\Gamma, \tau)}} (\Theta, \rho)$. 
By Fact \ref{FC}.4,  $\J{5}$ is valid in $\mathcal{F}$. 

	\item $L = \IL$.  
We can similarly prove that $\J{2}_{+}$ and $\J{5}$ are valid in $\mathcal{F}$ as above. 
Also, $(\Gamma, \tau) {R} (\Delta, \sigma)$ obviously implies $(\Delta, \sigma) {S_{(\Gamma, \tau)}} (\Delta, \sigma)$, and hence $\J{1}$ is valid in $\mathcal{F}$ by Fact \ref{FC}.1. 
\end{itemize}

Let $(W, R, \{S_{x}\}_{x \in W}, \Vdash)$ be an $\IL^-$-model  with 
\[
(\Gamma, \tau) \Vdash p \Longleftrightarrow p \in \Gamma
\]
for any $(\Gamma, \tau) \in W$. 
Then, we prove the following claim. 

\begin{cl}[Truth Lemma]
Let $A' \in \Phi$ and let $(\Gamma, \tau) \in W$. Then, 
\[
A' \in \Gamma \Longleftrightarrow (\Gamma, \tau) \Vdash A'.
\]
\end{cl}

\begin{proof}

We prove by induction on the construction of $A'$. 
We only prove the case $A' \equiv \uI C$. 

($\Longrightarrow$): Suppose $\uI C \in \Gamma$. 
We show $(\Gamma, \tau) \Vdash \uI C$. 
Let $(\Delta, \sigma) \in W$ such that $(\Gamma, \tau) {R} (\Delta, \sigma)$. 
We take $\Theta \in K_{\ell}$ and a sequence $\rho$ of formulas of $\Phi_{\uI}$ for each of the following two cases. 

\begin{itemize}
	\item Suppose $\lnot \uI D \in \Gamma$, $\Box \lnot D \in \Delta$ and $\tau*\seq{D} \subseteq \sigma$. 
Since $\uI C \in \Gamma$,  there exists $\Theta \in K_{\ell}$ such that $C, {\sim} D, \Box \lnot D \in \Theta$ and $\Gamma \prec \Theta$ by Lemma \ref{lem2}.4. 
Then, let $\rho := \tau*\seq{D}$. 

	\item Otherwise. 
Since $\Gamma \prec \Delta$, we have $\Box \bot \notin \Gamma$. 
Hence, $\uI \bot \notin \Gamma$ by $\uJ{6}$. 
Since $\uI C \in \Gamma$, there exists $\Theta \in K_{\ell}$ such that $C \in \Theta$ and $\Gamma \prec \Theta$ by Lemma \ref{lem2}.3. 
Then, let $\rho := \tau*\seq{\bot}$. 
\end{itemize}

In any case, we have $\rank(\Theta) + |\rho| \leq \rank(\Gamma) + |\tau| \leq \rank(\Gamma_{0})$. 
Therefore, we obtain $(\Theta, \rho) \in W$ and $(\Delta, \sigma) {S_{(\Gamma, \tau)}} (\Theta, \rho)$. 
By induction hypothesis, we have $(\Theta, \rho) \Vdash C$, and hence $(\Gamma, \tau) \Vdash \uI C$. 

\vspace{2mm}

($\Longleftarrow$): Suppose $\uI C \notin \Gamma$. 
We show $(\Gamma, \tau) \nVdash \uI C$. 
Let $\sigma := \tau*\seq{C}$. 
We take $\Delta \in K_{\ell}$ for each of the following two cases. 
\begin{itemize}
	\item $\ell = \il^-(\I{2}, \I{3})$: 
Since $\uI C \notin \Gamma$, there exists $\Delta \in K_{\ell}$ such that $\Gamma \prec \Delta$ and $\Box \bot \in \Delta$ by Lemma \ref{lem1}.1. 
Then, $\Box {\sim} C \in \Delta$. 

	\item $\ell = \il$: 
By Proposition \ref{relation}.1, $\il$ proves $\uJ{15}$. 
Thus, there exists $\Delta \in K_{\ell}$ such that $\Gamma \prec \Delta$ and ${\sim} C, \Box \lnot C \in \Delta$ by Lemma \ref{lem1}.3. 
\end{itemize}

In any case, we have $\rank(\Delta) + |\sigma| \leq \rank(\Gamma) + |\tau| \leq \rank(\Gamma_{0})$. 
Thus, $(\Delta, \sigma) \in W$. 
Then, for any $(\Theta, \rho) \in W$ with $(\Delta, \sigma) {S_{(\Gamma, \tau)}} (\Theta, \rho)$, we have ${\sim}C \in \Theta$ by the definition of $S_{(\Gamma, \tau)}$. 
Since $(\Theta, \rho) \nVdash C$ by induction hypothesis, we obtain $(\Gamma, \tau) \nVdash \uI C$. 
\end{proof}

Since $(\Gamma_{0}, \epsilon) \in W$ and ${\sim}A \in \Gamma_{0}$, 
we obtain $(\Gamma_{0}, \epsilon) \nVdash A$ by Claim 2. 
\end{proof}

\subsection{The unary interpretability logics for the sublogics of $\IL$ which are complete with respect to $\ILS$-frames}\label{SubSec:MCwrtGVF}

In this subsection, we consider the eight sublogics of $\IL$ which are complete with respect to $\ILS$-frames. 
By Facts~\ref{p1}.2 and 3, $\IL^-(\J{4_{+}}) \subseteq \IL^-(\J{2}, \J{4}_{+}) \subseteq \IL^-(\J{2}_{+})$. 
Therefore, $\IL^-(\J{2}, \J{4}_{+})$ corresponds to $\il^-(\I{2})$ by Corollary~\ref{cor1}. 
Hence, we just consider the remaining seven logics. 

Firstly, we consider the five logics $\IL^{-}(\J{4})$, $\IL^{-}(\J{4}, \J{5})$, $\IL^{-}(\J{2})$, $\IL^-(\J{1}, \J{4})$, and $\IL^{-}(\J{1}, \J{4}, \J{5})$. 

\begin{thm}\label{ilincomp4}
Let $L \in \{\IL^{-}(\J{4}, \J{5})$, $\IL^{-}(\J{2})$, $\IL^-(\J{1}, \J{4})$, $\IL^{-}(\J{1}, \J{4}, \J{5})\}$ and let $\ell$ be the following logic; 
\begin{eqnarray*}
\ell := \left\{
\begin{array}{ll}
\il^-(\I{4}) & \text{if} \, \, L = \IL^{-}(\J{4}, \J{5}) \, \, \text{or} \, \, L = \IL^{-}(\J{2}),\\
\il^-(\uJ{1}, \I{4}) & \text{if} \, \, L = \IL^-(\J{1}, \J{4}),\\
\il^-(\uJ{15}, \I{4}) & \text{if} \, \, L = \IL^{-}(\J{1}, \J{4}, \J{5}).
\end{array}
\right.
\end{eqnarray*}
Then, for any $\mathcal{L}(\Box, \uI)$ formula $A$, if $L \vdash A$, then $\ell \vdash A$. 
\end{thm}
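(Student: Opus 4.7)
The plan mirrors Theorem~\ref{ilcomp} but now builds a finite $\ILS$-model, invoking Fact~\ref{gcomp} in place of Fact~\ref{comj15} to conclude $L \nvdash A$. Assuming $\ell \nvdash A$, I fix a finite adequate set $\Phi \ni A$ and a $\Phi$-maximally $\ell$-consistent $\Gamma_{0}$ with ${\sim}A \in \Gamma_{0}$, and let
\[
W := \{(\Gamma, B) : \Gamma \in K_{\ell},\ B \in \Phi_{\uI}\}, \qquad (\Gamma, B) R (\Delta, D) :\Longleftrightarrow \Gamma \prec \Delta,
\]
exactly as before. The central new ingredient is the design of $S_{(\Gamma,B)}$, which now takes values in $\mathcal{P}(W) \setminus \{\emptyset\}$. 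I propose a definition of the form
\[
(\Delta, D) S_{(\Gamma, B)} V :\Longleftrightarrow \Gamma \prec \Delta,\ V \cap R[(\Gamma, B)] \neq \emptyset,\ \text{and a marker condition on $V$ specific to $L$,}
\]
where the marker condition states: if $\lnot \uI D \in \Gamma$ and certain $L$-specific formulas (none, ${\sim}D$, $\Box \lnot D$, or both, matching the output of the applicable clause of Lemma~\ref{lem1}) lie in $\Delta$, then $V$ contains some $(\Theta, E)$ with ${\sim}D$ (and the analogous formulas) in $\Theta$. Monotonicity is immediate, and the intersection clause $V \cap R[(\Gamma,B)] \neq \emptyset$ yields $\J{4}$ directly via Fact~\ref{gFC}.3. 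Validity of $\J{1}$ (for $\IL^-(\J{1}, \J{4})$) and $\J{5}$ is established by taking $V$ to be a singleton containing $(\Delta, D)$ or an $R$-successor of $\Delta$, exactly parallel to the $\IL^-\!$-frame proofs.

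The truth lemma is the substantive step. For $\uI C \notin \Gamma$, I pick $(\Delta, C)$ via the clause of Lemma~\ref{lem1} appropriate to $\ell$, so that the marker premise is satisfied and every $V$ with $(\Delta, C) S_{(\Gamma, B)} V$ is forced to contain a ${\sim}C$-witness. For $\uI C \in \Gamma$, the essential use of $\I{4}$ enters via Lemma~\ref{lem3} to produce $\Theta_{0}$ with $C \in \Theta_{0}$ and $\Gamma \prec \Theta_{0}$; for each $R$-successor $(\Delta, D)$ of $(\Gamma, B)$ triggering the marker, I additionally apply Lemma~\ref{lem2}.1 to obtain $\Theta_{D}$ with $C, {\sim}D \in \Theta_{D}$, and set $V := \{(\Theta_{0}, \bot), (\Theta_{D}, \bot)\}$. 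Every element of $V$ satisfies $C$ by induction hypothesis, and $V$ meets both the $R$-successor requirement and the marker condition.

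The main obstacle is the case $L = \IL^-(\J{2})$, where Fact~\ref{gFC}.2 demands the union-closure $x S_{w} V \wedge (\forall y \in V)(y S_{w} U_{y}) \Rightarrow x S_{w} \bigcup_{y \in V} U_{y}$. This does not hold for the naive definition above, because the marker for $(\Delta, D)$ does not automatically propagate through nested $S$-steps whose markers pertain to the second coordinates of elements of $V$. I expect this case to require a more elaborate construction, either by enlarging $S$ to its union-closure while carefully tracking which witnesses arose from the original marker, or by enriching $W$ with a sequence component bounded by $\rank(\Gamma_{0})$, as in Theorem~\ref{ilcomp2a5}, so that marker information is carried along iterated $S$-chains. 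The design of the marker condition must also be uniform enough that, across the four subcases of Theorem~\ref{ilincomp4}, the same argument validates the required combination of $\J{4}$, $\J{5}$, $\J{1}$, and $\J{2}$ in the constructed frame.
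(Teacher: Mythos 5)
Your construction coincides with the paper's for three of the four logics: the same world set $\{(\Gamma,B) : \Gamma \in K_{\ell},\ B \in \Phi_{\uI}\}$, $R$ given by $\prec$, and an $S_{(\Gamma,B)}$ consisting of the nonemptiness clause $V \cap R[(\Gamma,B)] \neq \emptyset$ plus an existential marker condition keyed to the second coordinate of $(\Delta,C)$; Monotonicity, the validity of $\J{4}$, $\J{1}$, $\J{5}$, and both directions of the truth lemma (via Lemmas~\ref{lem1}, \ref{lem2}.1, and \ref{lem3}) go through exactly as you describe. The genuine gap is the case $L = \IL^{-}(\J{2})$, which you correctly identify as the obstacle but leave unresolved, and the two repairs you float (taking a union-closure of $S$, or adding a sequence component as in Theorem~\ref{ilcomp2a5}) are not what is needed; the paper uses neither.

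The actual fix is a strengthened, \emph{self-propagating} marker that exploits the second coordinate of worlds as a carrier. For $L = \IL^{-}(\J{2})$ one defines: $(\Delta, C)\, S_{(\Gamma,B)}\, V$ iff $\Gamma \prec \Delta$, some element of $V$ is an $R$-successor of $(\Gamma,B)$, and \emph{whenever} $\lnot \uI C \in \Gamma$ (no side condition on $\Delta$), $V$ contains both a falsifying witness $(\Lambda_{0}, E)$ with ${\sim}C \in \Lambda_{0}$ and a relay $(\Lambda_{1}, C)$ with $\Gamma \prec \Lambda_{1}$ \emph{whose second coordinate is again $C$}. This is what makes the union-closure of Fact~\ref{gFC}.2 go through: given $(\Delta,C)\,S_{(\Gamma,B)}\,V$ and sets $U_{y}$ for $y \in V \cap R[(\Gamma,B)]$, the relay $(\Lambda_{1},C)$ lies in $V \cap R[(\Gamma,B)]$, so $U_{(\Lambda_{1},C)}$ must itself contain a ${\sim}C$-witness and a fresh relay with second coordinate $C$, and hence so does $\bigcup_{y} U_{y}$. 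Correspondingly, in the forward direction of the truth lemma your choice $V := \{(\Theta_{0},\bot),(\Theta_{D},\bot)\}$ must be replaced by $V := \{(\Theta, D), (\Lambda, D)\}$ with $\Gamma \prec \Theta$, $C \in \Theta$ (from Lemma~\ref{lem3}, using $\I{4}$) and $C, {\sim}D \in \Lambda$ (from Lemma~\ref{lem2}.1), so that the relay element carries the second coordinate $D$ demanded by the marker. Without this device the $\J{2}$ case does fail for the naive definition, exactly as you suspected, and no appeal to $\uJ{25}$ or to sequences is available for $\il^{-}(\I{4})$.
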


\begin{proof}
Suppose $\ell \nvdash A$. 
By Fact~\ref{gcomp}, it suffices to show that there exist an $\ILS$-model $(W, R, \{S_{x}\}_{x \in W}, \Vdash)$ in which all axiom schemata in $L$ are valid and $w \in W$ such that $w \nVdash A$. 
Let $\Phi$ be any finite adequate set including $A$ and let $\Gamma_{0} \in K_{\ell}$ with ${\sim}A \in \Gamma_{0}$. 
We define $\mathcal{F} = (W, R, \{S_{x}\}_{x \in W})$ as follows:
\begin{itemize}
	\item $W := \{(\Gamma, B): \Gamma \in K_{\ell}, B \in \Phi_{\uI}\}$; 
	\item $(\Gamma, B) {R} (\Delta, C) :\Longleftrightarrow \Gamma \prec \Delta$; 
	\item $(\Delta, C) {S_{(\Gamma, B)}} V :\Longleftrightarrow \Gamma \prec \Delta$, there exists $(\Theta, D) \in V$ such that $\Gamma \prec \Theta$, and the following conditions that depend on $L$ hold. 
\begin{itemize}
	\item $L = \IL^{-}(\J{4}, \J{5})$: If $\lnot \uI C \in \Gamma$ and $\Box \bot \in \Delta$, then there exists $(\Lambda, E) \in V$ such that ${\sim}C \in \Lambda$. 
	\item $L = \IL^{-}(\J{2})$: If $\lnot \uI C \in \Gamma$, then there exist $(\Lambda_{0}, E), (\Lambda_{1}, C) \in V$ such that ${\sim}C \in \Lambda_{0}$ and $\Gamma \prec \Lambda_{1}$. 
	\item $L = \IL^{-}(\J{1}, \J{4})$: If $\lnot \uI C \in \Gamma$ and ${\sim} C \in \Delta$, then there exists $(\Lambda, E) \in V$ such that ${\sim}C \in \Lambda$. 
	\item $L = \IL^{-}(\J{1}, \J{4}, \J{5})$: If $\lnot \uI C \in \Gamma$ and ${\sim}C, \Box \lnot C \in \Delta$, then there exists $(\Lambda, E) \in V$ such that ${\sim}C \in \Lambda$. 
\end{itemize} 
\end{itemize}

Since $(\Gamma_{0}, \bot) \in W$, we have $W \neq \emptyset$. 
Also, $S_{(\Gamma, B)}$ holds Monotonicity by its definition. 
Thus, $\mathcal{F}$ is obviously a finite $\ILS$-frame. 
We show that all axiom schemata of $L$ are valid in $\mathcal{F}$. 
In any case, if $(\Delta, C) {S_{(\Gamma, B)}} V$, then there exists $(\Theta, D) \in W$ such that $\Gamma \prec \Theta$. 
Hence $\J{4}$ is valid in $\mathcal{F}$ by Fact \ref{gFC}.3. 
We distinguish the following four cases. 

\begin{itemize}
	\item $L = \IL^-(\J{4}, \J{5})$: 
We show that $\J{5}$ is valid in $\mathcal{F}$. 
Assume $(\Gamma, B) {R} (\Delta, C)$ and $(\Delta, C) {R} (\Theta, D)$. 
Then, $\Gamma \prec \Delta$ and $\Gamma \prec \Theta$. 
Also, $\Box \bot \notin \Delta$ because $\Delta \prec \Theta$. 
Therefore, $(\Delta, C) {S_{(\Gamma, B)}} \{(\Theta, D)\}$, and hence $\J{5}$ is valid in $\mathcal{F}$ by Fact~\ref{gFC}.5. 

	\item $L = \IL^-(\J{2})$: 
Suppose $(\Delta, C) {S_{(\Gamma, B)}} V$ and for any $(\Delta', C') \in (V \cap R[(\Gamma, B)])$, $(\Delta', C') {S_{(\Gamma, B)}} U_{(\Delta', C')}$. 
We distinguish the following two cases. 

\begin{itemize}
	\item $\lnot \uI C \in \Gamma$: 
Then, there exists $(\Lambda_{1}, C) \in V$ such that $\Gamma \prec \Lambda_{1}$. 
Since $(\Lambda_{1}, C) \in (V \cap R[(\Gamma, B)])$, we have $(\Lambda_{1}, C) {S_{(\Gamma, B)}} U_{(\Lambda_{1}, C)}$. 
By the definition of $S_{(\Gamma, B)}$, there exist $(\Lambda_{0}', E), (\Lambda_{1}', C) \in U_{(\Lambda_{1}, C)}$ such that ${\sim}C \in \Lambda_{0}'$ and $\Gamma \prec \Lambda_{1}'$. 
Therefore, we have $(\Delta, C) {S_{(\Gamma, B)}} U_{(\Lambda_{1}, C)}$. 
By Monotonicity, we obtain $(\Delta, C) {S_{(\Gamma, B)}}\bigcup_{(\Delta', C') \in V}U_{(\Delta', C')}$. 

	\item Otherwise: 
By the definition of $S_{(\Gamma, B)}$, there exists $(\Theta, D) \in V$ such that $\Gamma \prec \Theta$. 
Since $(\Theta, D) \in (V \cap R[(\Gamma, B)])$, we have $(\Theta, D) {S_{(\Gamma, B)}}U_{(\Theta, D)}$. 
Then, there exists $(\Theta', D') \in U_{(\Theta, D)}$ such that $\Gamma \prec \Theta'$. 
Therefore, we have $(\Delta, C) {S_{(\Gamma, B)}} U_{(\Theta, D)}$. 
We obtain $(\Delta, C) {S_{(\Gamma, B)}} \bigcup_{(\Delta', C') \in V}U_{(\Delta', C')}$ by Monotonicity. 
\end{itemize}

Therefore, $\J{2}$ is valid in $\mathcal{F}$ by Fact~\ref{gFC}.2. 

	\item $L = \IL^-(\J{1}, \J{4})$: 
Obviously, if $(\Gamma, B) {R} (\Delta, C)$, then $(\Delta, C) {S_{(\Gamma, B)}} \{(\Delta, C)\}$. 
Therefore, $\J{1}$ is valid in $\mathcal{F}$ by Fact~\ref{gFC}.1. 

	\item $L = \IL^-(\J{1}, \J{4}, \J{5})$: 
This is proved by a similar way as above.  
\end{itemize}

Let $(W, R, \{S_{x}\}_{x \in W}, \Vdash)$ be an $\ILS$-model with 
\[
(\Gamma, B) \Vdash p \Longleftrightarrow p \in \Gamma
\]
for any $(\Gamma, B) \in W$. 
We prove the following claim. 

\begin{cl}[Truth Lemma]
Let $A' \in \Phi$ and let $(\Gamma, B) \in W$. Then,  
\[
A' \in \Gamma \Longleftrightarrow (\Gamma, B) \Vdash A'.
\]
\end{cl}

\begin{proof}
We prove by induction on the construction of $A'$. 
We only prove the case $A' \equiv \uI C$. 

($\Longrightarrow$): Suppose $\uI C \in \Gamma$. 
We show $(\Gamma, B) \Vdash \uI C$. 
Let $(\Delta, D) \in W$ such that $(\Gamma, B) {R} (\Delta, D)$. 
We distinguish the following two cases. 
\begin{itemize}
	\item Suppose $\lnot \uI D \in \Gamma$. 
Since $\uI C \in \Gamma$ and $\Gamma \prec \Delta$, there exists $\Theta \in K_{\ell}$ such that $\Gamma \prec \Theta$ and $C \in \Theta$ by Lemma \ref{lem3}. 
Also, since $\uI C \in \Gamma$ and $\lnot \uI D \in \Gamma$, there exists $\Lambda \in K_{\ell}$ such that $C, {\sim}D \in \Lambda$ by Lemma \ref{lem2}.1. 
Let $V := \{(\Theta, D), (\Lambda, D)\}$. 
Then, $(\Delta, D) {S_{(\Gamma, B)}} V$.

	\item Otherwise. 
Since $\uI C \in \Gamma$ and $\Gamma \prec \Delta$, there exists $\Theta \in K_{\ell}$ such that $\Gamma \prec \Theta$ and $C \in \Theta$ by Lemma \ref{lem3}. 
Let $V := \{(\Theta, D)\}$. 
Then, $(\Delta, D) {S_{(\Gamma, B)}} V$. 
\end{itemize}
In any case, by induction hypothesis, we have $(\Theta', D') \Vdash C$ for any $(\Theta', D') \in V$. 
Hence, we obtain $(\Gamma, B) \Vdash \uI C$.

($\Longleftarrow$): Suppose $\uI C \notin \Gamma$. 
We show $(\Gamma, B) \nVdash \uI C$. 
We take $\Delta \in K_{\ell}$ for each of the following three cases. 
\begin{itemize}
	\item $\ell = \il^-(\I{4})$: 
There exists $\Delta \in K_{\ell}$ such that $\Gamma \prec \Delta$ and $\Box \bot \in \Delta$ by Lemma \ref{lem1}.1. 

	\item $\ell = \il^-(\uJ{1}, \I{4})$: 
There exists $\Delta \in K_{\ell}$ such that $\Gamma \prec \Delta$ and $ {\sim} C \in \Delta$ by Lemma \ref{lem1}.2. 

	\item $\ell = \il^-(\uJ{15}, \I{4})$: 
There exists $\Delta \in K_{\ell}$ such that $\Gamma \prec \Delta$ and $\Box \lnot C,  {\sim} C \in \Delta$ by Lemma \ref{lem1}.3. 
\end{itemize}

In any cases, we have $(\Gamma, B) {R} (\Delta, C)$. 
Moreover, for any $V \subseteq W$ with $(\Delta, C) {S_{(\Gamma, B)}} V$, there exists $(\Lambda, E)$ such that ${\sim}C \in \Lambda$, and hence $(\Lambda, E) \nVdash C$ by induction hypothesis. 
Therefore, we obtain $(\Gamma, B) \nVdash \uI C$. 
\end{proof}

Since $(\Gamma_{0}, \bot) \in W$ and ${\sim}A \in \Gamma_{0}$, we obtain $(\Gamma_{0}, \bot) \nVdash A$ by Claim 3. 
\end{proof}

By Theorem~\ref{ilincomp4} and the implication $\Longleftarrow$ in Theorem~\ref{MainTheorem}, we obtain the following corollary. 

\begin{cor}\label{cor2}
Let $A$ be any $\mathcal{L}(\Box, \uI)$-formula and let $L$ be any logic such that $\IL^-(\J{4}) \subseteq L \subseteq \IL^-(\J{4}, \J{5})$ or $\IL^-(\J{4}) \subseteq L \subseteq \IL^-(\J{2})$. 
Then, $L \vdash A$ iff $\il^-(\I{4}) \vdash A$. 
\end{cor}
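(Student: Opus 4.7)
The plan is to prove both implications by reducing to previously established results, with the upper bound on $L$ delivering the forward direction and the lower bound delivering the reverse.

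For the forward implication, I would exploit the hypothesis $L \subseteq \IL^-(\J{4}, \J{5})$ or $L \subseteq \IL^-(\J{2})$. If $L \vdash A$ then either $\IL^-(\J{4}, \J{5}) \vdash A$ or $\IL^-(\J{2}) \vdash A$. In both cases Theorem~\ref{ilincomp4} assigns the same unary counterpart $\ell = \il^-(\I{4})$, so $\il^-(\I{4}) \vdash A$ follows immediately.

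For the reverse implication, I would exploit the hypothesis $\IL^-(\J{4}) \subseteq L$ and argue that $\IL^-(\J{4})$ already contains all axioms and rules of $\il^-(\I{4})$. Concretely, the propositional axioms $\G{1}'$ and the modal axioms $\G{2}$, $\G{3}$ are inherited from $\IL^-$; Modus Ponens and Necessitation are already rules of $\IL^-$; $\uJ{6}$ comes from Proposition~\ref{sound}.2; $\I{4}$ comes from Proposition~\ref{sound}.4; and closure under $\uR$ follows from Proposition~\ref{sound}.1 together with the fact that $\IL^-(\J{4})$ is closed under $\R{1}$. A routine induction on the length of a derivation in $\il^-(\I{4})$ then converts any proof of $A$ into a proof in $\IL^-(\J{4})$, and thus into a proof in $L$.

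There is essentially no obstacle: the corollary is a bookkeeping consequence of Theorem~\ref{ilincomp4} (which supplies the forward direction) and Proposition~\ref{sound} (which supplies the reverse direction). The only point worth noticing is that the two upper bounds $\IL^-(\J{4}, \J{5})$ and $\IL^-(\J{2})$ happen to share the same unary counterpart in Theorem~\ref{ilincomp4}, which is precisely what makes the sandwich statement collapse to a single target logic $\il^-(\I{4})$.
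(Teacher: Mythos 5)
Your proposal is correct and follows essentially the same route as the paper: the forward direction is exactly the paper's appeal to Theorem~\ref{ilincomp4} applied to the two upper bounds (which share the counterpart $\il^-(\I{4})$), and the reverse direction is the paper's implication $\Longleftarrow$ of Theorem~\ref{MainTheorem}, which is justified by Proposition~\ref{sound} (parts 1, 2, and 4) just as you describe.
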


Next, we consider the remaining two logics $\IL^-(\J{2}, \J{5})$ and $\IL^{-}(\J{2}, \J{4}_{+}, \J{5})$.

\begin{thm}\label{ilincomp25245}
Let $L \in \{\IL^-(\J{2}, \J{5}), \IL^{-}(\J{2}, \J{4}_{+}, \J{5})\}$ and let $\ell$ be the following logic; 
\begin{eqnarray*}
\ell := \left\{
\begin{array}{ll}
\il^{-}(\I{4},  \uJ{25}) & \text{if} \, \, L = \IL^-(\J{2}, \J{5}),\\
\il^{-}(\I{2},  \uJ{25}) & \text{if} \, \, L = \IL^{-}(\J{2}, \J{4}_{+}, \J{5}). 
\end{array}
\right.
\end{eqnarray*}
For any $\mathcal{L}(\Box, \uI)$ formula $A$, if $L \vdash A$, then $\ell \vdash A$. 
\end{thm}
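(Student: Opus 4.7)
The plan is to combine the two techniques already used in this section: the sequence-indexed worlds from Theorem~\ref{ilcomp2a5} (which accommodate $\J{5}$ in the presence of a $\J{2}$-style principle) and the set-valued $S$-relations from Theorem~\ref{ilincomp4} (which validate $\J{2}$ on $\ILS$-frames). Assume $\ell \nvdash A$; by Fact~\ref{gcomp}, it suffices to construct a finite $\ILS$-model falsifying $A$ in which every axiom schema of $L$ is valid. Fix a finite adequate set $\Phi \supseteq \{A\}$ and some $\Gamma_{0} \in K_{\ell}$ with ${\sim}A \in \Gamma_{0}$.

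Take worlds of the form $(\Gamma,\tau)$ where $\Gamma \in K_{\ell}$ and $\tau$ is a finite sequence of elements of $\Phi_{\uI}$ satisfying $\rank(\Gamma) + |\tau| \leq \rank(\Gamma_{0})$, and define $R$ by $(\Gamma,\tau) R (\Delta,\sigma)$ iff $\Gamma \prec \Delta$ and $\tau \subset \sigma$. Define $(\Delta,\sigma) S_{(\Gamma,\tau)} V$ to mean $(\Gamma,\tau) R (\Delta,\sigma)$, some $(\Theta,\rho) \in V$ satisfies $(\Gamma,\tau) R (\Theta,\rho)$, and a trigger clause fires: for every $C$ with $\lnot \uI C \in \Gamma$, $\tau * \seq{C} \subseteq \sigma$, and a case-dependent side condition on $\Delta$ (namely $\Box \bot \in \Delta$ when $L = \IL^{-}(\J{2},\J{5})$ and $\Box \lnot C \in \Delta$ when $L = \IL^{-}(\J{2},\J{4}_{+},\J{5})$), the set $V$ must contain a pair $(\Lambda_{0},\mu_{0})$ with ${\sim}C \in \Lambda_{0}$ together with a pair $(\Lambda_{1},\mu_{1})$ satisfying $\Gamma \prec \Lambda_{1}$, both carrying $\tau * \seq{C} \subseteq \mu_{i}$ and the propagated side condition. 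For $L = \IL^{-}(\J{2},\J{4}_{+},\J{5})$, additionally require $\Gamma \prec \Theta$ for every $(\Theta,\rho) \in V$, which secures $\J{4}_{+}$ by Fact~\ref{gFC}.4.

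Validity of $\J{5}$ (Fact~\ref{gFC}.5) is checked by taking $V = \{(\Theta,\rho)\}$ along $\Gamma \prec \Delta \prec \Theta$: propagation of the side condition through $\prec$ lets the single element $(\Theta,\rho)$ serve as both $(\Lambda_{0},\mu_{0})$ and $(\Lambda_{1},\mu_{1})$. Validity of $\J{2}$ (Fact~\ref{gFC}.2) follows by applying the closure hypothesis to the second witness $(\Lambda_{1},\mu_{1})$: since the propagated side condition sits in $\Lambda_{1}$, the trigger fires again on $(\Lambda_{1},\mu_{1})$, and its successor set $U_{(\Lambda_{1},\mu_{1})}$ supplies the required witnesses inside the union. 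For the Truth Lemma, the only nonroutine case is $A' \equiv \uI C$: in the positive direction one uses Lemmas~\ref{lem2}.1, \ref{lem2}.2, and \ref{lem3} (the last requires $\I{4}$, which $\il^{-}(\I{2})$ also proves by Proposition~\ref{relation}.5) to produce the required $\Theta$'s carrying $C$, exploiting $\uJ{25}$ to sharpen the ``$\Gamma \prec$''-witness with $\Box \lnot D$; in the negative direction, Lemma~\ref{lem1}.1 yields $\Delta$ with $\Box \bot \in \Delta$, and taking $\sigma = \tau * \seq{C}$ activates the trigger, so every $V$ with $(\Delta,\sigma) S_{(\Gamma,\tau)} V$ contains a pair falsifying $C$.

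The main obstacle is calibrating the side condition precisely: it must propagate along $\prec$ (so that $\J{5}$ and the negative direction of the Truth Lemma work), be strong enough to re-fire the trigger on $(\Lambda_{1},\mu_{1})$ (so that $\J{2}$'s union closure goes through), and yet be weak enough to actually be produced from $\ell$, whose only interpretability-style axioms are $\uJ{25}$ and $\I{4}$ (respectively $\I{2}$). The match between $\uJ{25}$'s one-step ``$\Box \lnot D$'' move and the position of the triggering formula within the sequence $\tau$ is the delicate bookkeeping point where the two cases genuinely diverge.
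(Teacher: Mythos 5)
Your overall architecture matches the paper's: worlds $(\Gamma,\tau)$ with a rank-bounded sequence component, a set-valued $S_{(\Gamma,\tau)}$ whose trigger clause demands two witnesses in $V$ (one falsifying $C$, one $\prec$-accessible that re-fires the trigger so that $\J{2}$'s union closure goes through), and the same lemmas driving the Truth Lemma. But the calibration you yourself flag as the delicate point is exactly where your clauses fail, in both cases. First, you require \emph{both} witnesses to carry the propagated side condition and the sequence extension $\tau*\seq{C}\subseteq\mu_i$. In the positive direction of the Truth Lemma (with $\uI C\in\Gamma$ and $\lnot\uI D\in\Gamma$) you must then realize, inside a single $\Phi$-maximally consistent set, $C$, ${\sim}D$, \emph{and} the side condition. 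For $L=\IL^-(\J{2},\J{4}_+,\J{5})$ that set is $\{C,{\sim}D,\Box\lnot D\}$, whose consistency cannot be guaranteed without $\I{3}$ (its inconsistency only yields $\ell\vdash\Box(C\to D\lor\Diamond D)$, from which $\I{2}$ and $\uJ{25}$ alone do not give $\uI D\in\Gamma$); the available lemmas produce either $C,{\sim}D$ with $\Gamma\prec$ (Lemma~\ref{lem2}.3) or $C,\Box\lnot D$ with $\Gamma\prec$ (Lemma~\ref{lem2}.2), never all three at once. The paper's clause is deliberately asymmetric: one witness carries ${\sim}C$ (and, only in the $\J{4}_+$ case, $\Gamma\prec$ and the sequence condition, but never $\Box\lnot C$), while the other carries $\Gamma\prec$, $\Box\lnot C$ and the sequence condition but not ${\sim}C$; only the second re-fires the trigger. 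Second, your side condition $\Box\bot\in\Delta$ for $L=\IL^-(\J{2},\J{5})$ is unworkable: re-firing would need a witness with $\Gamma\prec\Lambda$, $C\in\Lambda$, and $\Box\bot\in\Lambda$, i.e.\ essentially $\Diamond(C\land\Box\bot)$, which is not derivable from $\uI C\in\Gamma$ (take $C=\Diamond\top$). The paper uses $\Box\lnot C\in\Delta$ as the trigger in \emph{both} cases, since $C\land\Box\lnot D$ with $\Gamma\prec$ is precisely what $\uJ{25}$ can reproduce one level down via Lemma~\ref{lem2}.2.

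Two further slips. Requiring $\Gamma\prec\Theta$ for \emph{every} $(\Theta,\rho)\in V$ to secure $\J{4}_+$ destroys Monotonicity (condition 4 of $\ILS$-frames), since enlarging $V$ by non-successors must preserve $S$; the paper keeps all clauses existential and verifies $y S_x V\Rightarrow y S_x(V\cap R[x])$ directly from them. And your ceiling $\rank(\Gamma)+|\tau|\leq\rank(\Gamma_0)$, combined with forcing the falsifying witness to extend $\tau$, can push that witness out of $W$ when it is produced by Lemma~\ref{lem2}.1 without $\Gamma\prec$; the paper takes $r=\max\{\rank(\Gamma):\Gamma\in K_\ell\}$ as the bound and pairs that witness with the empty sequence in the $\il^{-}(\I{4},\uJ{25})$ case.
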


\begin{proof}
Suppose $\ell \nvdash A$. 
We find an $\ILS$-model $(W, R, \{S_{x}\}_{x \in W}, \Vdash)$ in which all axiom schemata in $L$ are valid and $w \in W$ with $w \nVdash A$. 
Let $\Phi$ be any finite adequate set including $A$ and let $\Gamma_{0} \in K_{\ell}$ with ${\sim}A \in \Gamma_{0}$. 
For each $\Gamma \in K_{\ell}$, we define $\rank(\Gamma)$ as in the proof of Theorem \ref{ilcomp2a5}. 
Let $r:=\max\{\rank(\Gamma) : \Gamma \in K_{\ell}\}$. 
Then, we define $\mathcal{F} = (W, R, \{S_{x}\}_{x \in W})$ as follows:
\begin{itemize}
	\item $W := \{(\Gamma, \tau): \Gamma \in K_{\ell}$ and $\tau$ is a finite sequence of elements of $\Phi_{\uI}$ with $\rank(\Gamma) + |\tau| \leq r\}$; 
	\item $(\Gamma, \tau) {R} (\Delta, \sigma) :\Longleftrightarrow \Gamma \prec \Delta$ and $\tau \subset \sigma$; 
	\item $(\Delta, \sigma) {S_{(\Gamma, \tau)}} V :\Longleftrightarrow (\Gamma, \tau) {R} (\Delta, \sigma)$, there exists $(\Theta, D) \in W$ such that $\Gamma \prec \Theta$ and the following conditions that depend on $L$ hold. 
\begin{itemize}
	\item $L = \IL^-(\J{2}, \J{5})$: If $\lnot \uI C \in \Gamma$, $\tau*\seq{C} \subseteq \sigma$, and $\Box \lnot C \in \Delta$, then there exist $(\Lambda_{1}, \rho_{1}), (\Lambda_{2}, \rho_{2}) \in V$ such that ${\sim}C \in \Lambda_{1}, \Gamma \prec \Lambda_{2}, \Box \lnot C \in \Lambda_{2}$, and $\tau*\seq{C} \subseteq \rho_{2}$.
 
	\item $L = \IL^-(\J{2}, \J{4}_{+}, \J{5})$: If $\lnot \uI C \in \Gamma$, $\tau*\seq{C} \subseteq \sigma$ and $\Box \lnot C \in \Delta$, then there exist $(\Lambda_{1}, \rho_{1}), (\Lambda_{2}, \rho_{2}) \in V$ such that ${\sim}C \in \Lambda_{1}, \Gamma \prec \Lambda_{1}, \tau*\seq{C} \subseteq \rho_{1}, \Gamma \prec \Lambda_{2}, \Box \lnot C \in \Lambda_{2}$, and $\tau*\seq{C} \subseteq \rho_{2}$. 
\end{itemize} 
\end{itemize}

Since $(\Gamma_{0}, \epsilon) \in W$, we have $W \neq \emptyset$. 
Thus, $\mathcal{F}$ is obviously a finite $\ILS$-frame. 
We show that all axiom schemata of $L$ are valid in $\mathcal{F}$. 
In any case, $\J{4}$ is valid in $\mathcal{F}$ by Fact \ref{gFC}.3. 
We distinguish the following two cases. 

\begin{itemize}
	\item $L = \IL^-(\J{2}, \J{5})$:  
Firstly, we show that $\J{2}$ is valid in $\mathcal{F}$. 
Suppose $(\Delta, \sigma) {S_{(\Gamma, \tau)}} V$ and for any $(\Delta', \sigma') \in (V \cap R[(\Gamma, \tau)])$, $(\Delta', \sigma') {S_{(\Gamma, \tau)}} U_{(\Delta', \sigma')}$. 
We distinguish the following two cases. 
\begin{itemize}
	\item $\lnot \uI C \in \Gamma$, $\tau*\seq{C} \subseteq \sigma$, and $\Box \lnot C \in \Delta$: 
Then, there exists $(\Lambda_{2}, \rho_{2}) \in V$ such that $\Gamma \prec \Lambda_{2}, \Box \lnot C \in \Lambda_{2}$, and $\tau*\seq{C} \subseteq \rho_{2}$. 
Since $(\Lambda_{2}, \rho_{2}) \in (V \cap R[(\Gamma, \tau)])$, $(\Lambda_{2}, \rho_{2}) {S_{(\Gamma, \tau)}} U_{(\Lambda_{2}, \rho_{2})}$. 
Thus, there exist $(\Lambda_{1}', \rho_{1}'), (\Lambda_{2}', \rho_{2}') \in U_{(\Lambda_{2}, \rho_{2})}$ such that ${\sim}C \in \Lambda_{1}'$, $\Gamma \prec \Lambda_{2}', \Box{\sim}C \in \Lambda_{2}'$, and $\tau*\seq{C} \subseteq \rho_{2}'$. 
Therefore, $(\Delta, \sigma) {S_{(\Gamma, \tau)}} U_{(\Lambda_{2}, \rho_{2})}$, and hence, by Monotonicity, we obtain $(\Delta, \sigma) {S_{(\Gamma, \tau)}} \bigcup_{(\Delta', \sigma') \in V} U_{(\Delta', \sigma')}$. 

	\item Otherwise: 
By the definition of $S_{(\Gamma,\tau)}$, there exists$(\Theta, \rho) \in V$ such that $(\Gamma,\tau) {R} (\Theta, \rho)$. 
Since $(\Theta, \rho) \in (V \cap R[(\Gamma, \tau)])$, $(\Theta, \rho) {S_{(\Gamma, \tau)}}U_{(\Theta, \rho)}$. 
Then, there exists $(\Theta', \rho') \in U_{(\Theta, \rho)}$ such that $(\Gamma,\tau) {R} (\Theta', \rho')$. 
Therefore, $(\Delta, \sigma) {S_{(\Gamma, \tau)}} U_{(\Theta, \rho)}$, and hence $(\Delta, \sigma) {S_{(\Gamma, \tau)}} \bigcup_{(\Delta', \sigma') \in V} U_{(\Delta', \sigma')}$ by Monotonicity. 
\end{itemize}
Therefore, $\J{2}$ is valid in $\mathcal{F}$ by Fact \ref{gFC}.2. 

Secondly, we show that $\J{5}$ is valid in $\mathcal{F}$. 
Assume $(\Gamma, \tau) {R} (\Delta, \sigma)$ and $(\Delta, \sigma) {R} (\Theta, \rho)$. 
Then, $(\Gamma, \tau) {R} (\Theta, \rho)$.
Also, since $\Delta \prec \Theta$ and $\sigma \subset \rho$, if $\lnot \uI C \in \Gamma$, $\tau*\seq{C} \subseteq \sigma$, and $\Box \lnot C \in \Delta$, 
then ${\sim}C, \Box \lnot C \in \Theta$ and $\tau*\seq{C} \subseteq \rho$. 
Therefore, $(\Delta, \sigma) {S_{(\Gamma, \tau)}} \{(\Theta, \rho)\}$ and by Fact \ref{gFC}.5,  $\J{5}$ is valid in $\mathcal{F}$. 

	\item $L = \IL^-(\J{2}, \J{4}_{+}, \J{5})$.  
We can similarly prove that $\J{2}$ and $\J{5}$ are valid in $\mathcal{F}$ as above. 
Also, $(\Delta, \sigma) {S_{(\Gamma, \tau)}} V$ obviously implies $(\Delta, \sigma) {S_{(\Gamma, \tau)}} (V \cap R[(\Gamma, \tau)])$ by the definition of $S_{(\Gamma, \tau)}$. 
Therefore, $\J{4}_{+}$ is valid in $\mathcal{F}$ by Fact~\ref{gFC}.4. 
\end{itemize}

Let $(W, R, \{S_{x}\}_{x \in W}, \Vdash)$ be an $\IL^-$-model  with 
\[
(\Gamma, \tau) \Vdash p \Longleftrightarrow p \in \Gamma
\]
for any $(\Gamma, \tau) \in W$. 
We prove the following claim. 

\begin{cl}[Truth Lemma]
Let $A' \in \Phi$ and let $(\Gamma, \tau) \in W$. Then, 
\[
A' \in \Gamma \Longleftrightarrow (\Gamma, \tau) \Vdash A'. 
\]
\end{cl}

\begin{proof}

We prove by induction on the construction of $A'$. 
We only prove the case $A' \equiv \uI C$. 

($\Longrightarrow$): Suppose $\uI C \in \Gamma$. 
We show that $(\Gamma, \tau) \Vdash \uI C$. 
Let $(\Delta, \sigma) \in W$ such that $(\Gamma, \tau) {R} (\Delta, \sigma)$. 
We take $V \subseteq W$ for each of the following two cases. 

\begin{itemize}
	\item $\lnot \uI D \in \Gamma$, $\tau*\seq{D} \subseteq \sigma$, and $\Box \lnot D \in \Delta$:  
Since $\uI C \in \Gamma$ and $\lnot \uI D \in \Gamma$, there exists $\Lambda_{1} \in K_{\ell}$ such that $C, {\sim}D \in \Lambda_{1}$ by Lemma \ref{lem2}.1. 
If $\ell = \il^-(\I{2}, \uJ{25})$, then $\Gamma \prec \Lambda_{1}$ also holds by Lemma \ref{lem2}.3. 
On the other hand, there exists $\Lambda_{2} \in K_{\ell}$ such that $\Gamma \prec \Lambda_{2}$ and $C, \Box \lnot D \in \Lambda_{2}$ by Lemma \ref{lem2}.2. 
Then, let $\rho_{1}$ be $\epsilon$ if $\ell = \il^-(\I{4}, \uJ{25})$, and $\tau*\seq{D}$ if $\ell = \il^-(\I{2}, \uJ{25})$, and let $\rho_{2} = \tau*\seq{D}$. 
It is easy to see that $(\Lambda_{1}, \rho_{1}), (\Lambda_{2}, \rho_{2}) \in W$ and we define $V := \{(\Lambda_{1}, \rho_{1}), (\Lambda_{2}, \rho_{2})\}$. 

	\item Otherwise:  
Since $\uI C \in \Gamma$, $\Gamma \prec \Delta$, and $\ell \vdash \I{4}$ by Proposition~\ref{relation}.5, there exists $\Theta \in K_{\ell}$ such that $\Gamma \prec \Theta$ and $C \in \Theta$ by Lemma \ref{lem3}. 
Let $\rho:=\tau*\seq{\bot}$. 
Then, we obtain $(\Theta, \rho) \in W$. 
Let $V := \{(\Theta, \rho)\}$. 
\end{itemize}

In any case, $(\Delta, \sigma) {S_{(\Gamma, \tau)}} V$ by the definition of $S_{(\Gamma, \tau)}$.
Furthermore, for any $(\Theta', \rho') \in V$, $(\Theta', \rho') \Vdash C$ by induction hypothesis. 	
Therefore, we obtain $(\Gamma, \tau) \Vdash \uI C$.

($\Longleftarrow$): Suppose $\uI C \notin \Gamma$. 
We show $(\Gamma, \tau) \nVdash \uI C$. 
Since $\uI C \notin \Gamma$,  there exists $\Delta \in K_{\ell}$ such that $\Gamma \prec \Delta$ and $\Box \bot \in \Delta$ by Lemma \ref{lem1}.1, and hence $\Box \lnot C \in \Delta$. 
Let $\sigma:=\tau*\seq{C}$. 
Then, $(\Delta, \sigma) \in W$ and $(\Gamma, \tau) {R} (\Delta, \sigma)$. 
For any $V \subseteq W$ with $(\Delta, \sigma) {S_{(\Gamma, \tau)}} V$, there exists $(\Lambda_{1}, \rho_{1}) \in V$ such that ${\sim}C \in \Lambda_{1}$ by the definition of $S_{(\Gamma, \tau)}$. 
Since $(\Lambda_{1}, \rho_{1}) \nVdash C$ by induction hypothesis, we obtain $(\Gamma, \tau) \nVdash \uI C$. 
\end{proof}

Since $(\Gamma_{0}, \epsilon) \in W$ and ${\sim}A \in \Gamma_{0}$, 
we obtain $(\Gamma_{0}, \epsilon) \nVdash A$ by Claim 4. 
\end{proof}

This ends the proof of Theorem~\ref{MainTheorem}.

\section{Conclusions and future work}\label{Sec:ConFu}

In this paper, for each of the twenty sublogics of $\IL$, we find the unary interpretability logic corresponding to that logic. 
Then, we understand that several situations are also simplified and changed for these unary interpretability logics compared with the sublogics of $\IL$. 
We find out that the axiom schemata $\J{2}$, $\J{2}_{+}$, and $\J{5}$ may collapse for unary interpretability logics as we stated in Remark~\ref{Rem:Collpse}. 
Moreover, a situation for the completeness with respect to relational semantics may be changed. 
The logic $\IL^-(\J{2}, \J{4}_{+})$ is incomplete with respect to $\IL^- \!$-frame. 
On the other hand, $\il^-(\I{2})$ corresponds to $\IL^-(\J{2}, \J{4}_{+})$, and is complete with respect to $\IL^- \!$-frame because $\il^-(\I{2})$ also corresponds to $\IL^-(\J{4}_{+})$. 

In the rest of this section, we propose several problems from two view points. 

\subsection{Logics of $\Gamma$-conservativity}

In~\cite{Ign91}, Ignatiev investigated the logic of $\Gamma$-conservativity. 
Then, Ignatiev introduced the logic $\CL$ and extensions $\mathbf{SbCLM}$ and $\mathbf{SCL}$ of $\CL$. 
In particular, Ignatiev proved that the logics $\mathbf{SbCLM}$ and $\mathbf{SCL}$ are logics of $\Pi_{2}$-conservativity and $\Gamma$-conservativity ($\Gamma \in \{\Sigma_{n}, \Pi_{n} : n \geq 3\}$), respectively. 
On the other hand, he proposed a problem what are the logics of $\Sigma_{1}$, $\Sigma_{2}$-conservativity. 
Ignatiev's problem remains open. 

We may expect finding the unary $\Sigma_{1}$, $\Sigma_{2}$-conservativity logics. 
Then, we propose the following problem:

\begin{prob}\label{uS12}
What are the unary $\Sigma_{1}$, $\Sigma_{2}$-conservativity  logics? 
\end{prob}

Concerning Problem~\ref{uS12}, it may make sense to investigate unary counterparts of the logics $\mathbf{SbCLM}$ and $\mathbf{SCL}$. 
Then, we also propose the following problem:

\begin{prob}
What are the unary counterparts of the logics $\mathbf{SbCLM}$ and $\mathbf{SCL}$? 
\end{prob}

By Theorem~\ref{MainTheorem}, the set of $\CL$-provable $\mathcal{L}(\Box, \uI)$-formulas collapses to the set of $\IL^-(\J{1}, \J{4}_{+})$-provable one, and $\il^-(\uJ{1}, \I{2})$ is the unary counterpart of $\CL$. 
Hence, we may expect that $\mathbf{SbCLM}$ and $\mathbf{SCL}$ collapse as $\CL$ and something simple unary counterpart of these logics are found.

\subsection{The fixed point and the Craig interpolation properties}

We concern the \textit{fixed point property} (FPP) and the \textit{Craig interpolation property} (CIP) for unary interpretability logics. 
In~\cite{IKO20}, Iwata, Kurahashi, and Okawa investigated FPP and CIP for the twelve sublogics of $\IL$ which are complete with respect to $\IL^-\!$-frames (The results are given at the left side in Table 2).

\begin{defn}
Let $p$ be any propositional variable and let $A$ be any $\mathcal{L}(\Box, \rhd)$-formula. 
We say that $p$ is \textit{modalized} in $A$ if all occurrences of $p$ in $A$ are in the scope of some modal operators $\Box$ or $\rhd$. 
\end{defn}

\begin{defn}
For each $\mathcal{L}(\Box, \rhd)$-formula $A$, let $\mathrm{var}(A)$ be the set of all propositional variables contained in $A$. 
\end{defn}

\begin{defn}\label{Def:FPP}
A logic $L$ is said to have FPP if for any $\mathcal{L}(\Box, \rhd)$-formula $A(p)$ in which $p$ is modalized, there exists an $\mathcal{L}(\Box, \rhd)$-formula $F$ such that $\mathrm{var}(F) \subseteq \mathrm{var}\bigl(A(p)\bigr) \setminus \{p\}$ and $L \vdash F \leftrightarrow A(F)$. 
We say that such a formula $F$ \textit{is a fixed point} of $A(p)$. 
Also, a logic $\ell$ is said to have FPP($\uI$) if FPP holds for $\ell$ with respect to $\mathcal{L}(\Box, \uI)$-formulas. 
\end{defn}

Iwata, Kurahashi, and Okawa~\cite{IKO20} proved that $A(\top) \rhd B\bigl(\Box \lnot A(\top)\bigr)$ is a fixed point of $A(p) \rhd B(p)$ for $\IL^-(\J{2}_{+}, \J{5})$. 
Thus, a fixed point of $\top \rhd B(p)$ is $\top \rhd B(\Box \bot)$. 
Then, as usual (See~\cite{DeJVis91, Lin96}), for any $\mathcal{L}(\Box, \uI)$-formula $A(p)$ in which $p$ is modalized, we can construct a fixed point $F$ of $A(p)$ as an $\mathcal{L}(\Box, \uI)$-formula. 
Therefore, $\IL^-(\J{2}_{+}, \J{5})$ has FPP($\uI$), and hence we obtain the following corollary by Theorem~\ref{MainTheorem}:
\begin{cor}
The logic $\il^-(\I{2}, \I{3})$ has FPP$(\uI)$. 
\end{cor}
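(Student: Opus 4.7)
The plan is to transfer FPP from $\IL^{-}(\J{2}_{+},\J{5})$ to $\il^{-}(\I{2},\I{3})$ using the correspondence in Theorem~\ref{MainTheorem}. Given an $\mathcal{L}(\Box,\uI)$-formula $A(p)$ in which $p$ is modalized, I will view $A(p)$ as an $\mathcal{L}(\Box,\rhd)$-formula through the abbreviation $\uI C \equiv \top \rhd C$. Every $\rhd$-subformula of $A(p)$ then has $\top$ on its left-hand side. Recall the explicit fixed point formula from \cite{IKO20}: in $\IL^{-}(\J{2}_{+},\J{5})$, a fixed point of $A(p) \rhd B(p)$ is $A(\top) \rhd B(\Box\lnot A(\top))$. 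Specializing to $A \equiv \top$, a fixed point of $\uI B(p)$ is $\uI B(\Box\bot)$, which remains an $\mathcal{L}(\Box,\uI)$-formula whenever $B(p)$ is.

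Next I will run the standard inductive fixed-point construction \`a la De Jongh--Visser and Lindstr\"om (see \cite{DeJVis91, Lin96}) on the modalized subformulas of $A(p)$, ordered by increasing complexity, replacing each by its fixed point in turn. The key observation is that at every stage the construction stays within $\mathcal{L}(\Box,\uI)$: a $\Box$-subformula contributes an expression still of the form $\Box(\cdot)$, and a $\uI$-subformula $\uI B(p)$ contributes $\uI B(\Box\bot)$ by the explicit formula above, which is again an $\mathcal{L}(\Box,\uI)$-formula. No new $\rhd$-subformulas outside the fragment $\uI$ are ever introduced. Consequently, we obtain an $\mathcal{L}(\Box,\uI)$-formula $F$ with $\mathrm{var}(F) \subseteq \mathrm{var}(A(p)) \setminus \{p\}$ and $\IL^{-}(\J{2}_{+},\J{5}) \vdash F \leftrightarrow A(F)$.

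Finally, because $F \leftrightarrow A(F)$ is an $\mathcal{L}(\Box,\uI)$-formula, Theorem~\ref{MainTheorem} applied to the pair $L = \IL^{-}(\J{2}_{+},\J{5})$ and $\ell = \il^{-}(\I{2},\I{3})$ yields $\il^{-}(\I{2},\I{3}) \vdash F \leftrightarrow A(F)$. This shows that $F$ is a fixed point of $A(p)$ in $\il^{-}(\I{2},\I{3})$ and hence this logic has FPP$(\uI)$.

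The main obstacle is verifying that the inductive construction genuinely preserves membership in $\mathcal{L}(\Box,\uI)$ at every stage; this reduces to the syntactic shape of the closed-form fixed point from \cite{IKO20}, which collapses to $\uI B(\Box\bot)$ precisely when the left-hand side is $\top$. Once this uniformity across the induction is established, the rest is a routine application of the correspondence theorem.
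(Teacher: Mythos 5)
Your proposal is correct and follows essentially the same route as the paper: specialize the explicit fixed point $A(\top) \rhd B(\Box\lnot A(\top))$ from \cite{IKO20} to the case $A \equiv \top$ to get $\uI B(\Box\bot)$, observe that the usual inductive construction therefore stays inside $\mathcal{L}(\Box,\uI)$ so that $\IL^-(\J{2}_{+},\J{5})$ has FPP($\uI$), and then transfer to $\il^-(\I{2},\I{3})$ via Theorem~\ref{MainTheorem}. No gaps.
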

Also, by a result in~\cite{IKO20}, the $\mathcal{L}(\Box, \uI)$-formula $\I \lnot p$ does not have a fixed point in $\IL^-(\J{1}, \J{4}_{+}, \J{5})$. 
Therefore, we obtain the following corollary by Theorem~\ref{MainTheorem}:
\begin{cor}\label{nFPPu}
The logic $\il^-(\uJ{15}, \I{2})$ does not have FPP$(\uI)$. 
\end{cor}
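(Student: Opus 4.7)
The plan is a direct transfer argument via Theorem~\ref{MainTheorem}, exploiting the fact that the fixed point property is a purely syntactic statement about provability. I would argue by contradiction: suppose $\il^-(\uJ{15}, \I{2})$ has FPP$(\uI)$. Consider the $\mathcal{L}(\Box, \uI)$-formula $A(p) \equiv \uI \lnot p$; the single occurrence of $p$ lies within the scope of $\uI$, so $p$ is modalized in $A(p)$. By FPP$(\uI)$ (Definition~\ref{Def:FPP} applied in the unary language), there would exist an $\mathcal{L}(\Box, \uI)$-formula $F$ with $\mathrm{var}(F) \subseteq \mathrm{var}(A(p)) \setminus \{p\} = \emptyset$ such that
\[
\il^-(\uJ{15}, \I{2}) \vdash F \leftrightarrow \uI \lnot F.
\]

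Next I would push this provability through Theorem~\ref{MainTheorem}. Since $F \leftrightarrow \uI \lnot F$ is itself an $\mathcal{L}(\Box, \uI)$-formula and $\il^-(\uJ{15}, \I{2})$ is the unary counterpart of $L := \IL^-(\J{1}, \J{4}_+, \J{5})$, the theorem yields $L \vdash F \leftrightarrow \uI \lnot F$. Recalling that $\uI A$ is literally an abbreviation for $\top \rhd A$, every $\mathcal{L}(\Box, \uI)$-formula is in particular an $\mathcal{L}(\Box, \rhd)$-formula; hence $F$ provides a closed $\mathcal{L}(\Box, \rhd)$ fixed point of $\uI \lnot p$ in $L$, with $\mathrm{var}(F) = \emptyset$. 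This directly contradicts the result cited from~\cite{IKO20}, stating that $\uI \lnot p$ admits no fixed point in $\IL^-(\J{1}, \J{4}_+, \J{5})$.

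There is essentially no serious obstacle here; the corollary is a clean transfer from the known failure of FPP on the binary side to the unary side. The only point worth verifying carefully is the language inclusion $\mathcal{L}(\Box, \uI) \subseteq \mathcal{L}(\Box, \rhd)$: a candidate fixed point built in the unary language is automatically an $\mathcal{L}(\Box, \rhd)$-formula eligible to refute the binary FPP for $\uI \lnot p$, and Theorem~\ref{MainTheorem} is applicable to $F \leftrightarrow \uI \lnot F$ precisely because this equivalence remains within $\mathcal{L}(\Box, \uI)$.
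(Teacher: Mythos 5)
Your proposal is correct and is essentially the paper's own argument: the paper likewise derives the corollary by combining the result from~\cite{IKO20} that $\uI\lnot p$ has no fixed point in $\IL^-(\J{1},\J{4}_+,\J{5})$ with the conservativity given by Theorem~\ref{MainTheorem}, which transfers any candidate unary fixed point $F$ (a closed $\mathcal{L}(\Box,\uI)$-formula, hence an $\mathcal{L}(\Box,\rhd)$-formula) to a contradiction on the binary side. Your write-up merely makes explicit the details the paper leaves implicit.
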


Thus, for the fixed point properties, we understand that there are no differences between the twelve sublogics of $\IL$ and the corresponding unary interpretability logics (See Table~2). 
In this context, we propose the following problem:

\begin{prob}
Are there an interpretability logic $L$ and a corresponding unary interpretability logic $\ell$ such that $L$ does not have FPP and $\ell$ has FPP$(\uI)$? 
\end{prob}

Finally, we concern the Craig interpolation properties. 

\begin{defn}
A logic $L$ is said to have CIP if for any $\mathcal{L}(\Box, \rhd)$-formulas $A$ and $B$, there exists an $\mathcal{L}(\Box, \rhd)$-formula $C$ such that $\mathrm{var}(C) \subseteq \mathrm{var}(A) \cap \mathrm{var}(B)$, $L \vdash A \to C$, and $L \vdash C \to B$. 
Also, a logic $\ell$ is said to have CIP($\uI$) if CIP holds for $\ell$ with respect to $\mathcal{L}(\Box, \uI)$-formulas. 
\end{defn}

By a result in~\cite{IKO20}, the uniqueness of fixed points $\bigl($UFP($\uI$)$\bigr)$ holds for the logic $\IL^-(\J{4}_{+})$. 
That is, for any $\mathcal{L}(\Box, \uI)$-formula $A(p)$ in which $p$ is modalized, 
\[
\IL^-(\J{4}_{+}) \vdash \boxdot\bigl(p \leftrightarrow A(p)\bigr) \land \boxdot\bigl(q \leftrightarrow A(q)\bigr) \to (p \leftrightarrow q). 
\]
Thus, UFP($\uI$) holds for extensions of $\il^-(\I{2})$ by Theorem~\ref{MainTheorem}. 
It is easily seen that CIP($\uI$) implies FPP($\uI$) for extensions of $\il^-(\I{2})$ (See~\cite{IKO20}). 
Therefore, we obtain the following corollary by Corollary~\ref{nFPPu}. 
\begin{cor}\label{CIPu}
CIP$(\uI)$ does not hold for $\il^-(\I{2})$, $\il^-(\uJ{1}, \I{2})$, and $\il^-(\uJ{15}, \I{2})$. 
\end{cor}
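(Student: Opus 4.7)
The plan is to reduce the failure of CIP$(\uI)$ for each of the three logics to the failure of FPP$(\uI)$ for $\il^-(\uJ{15}, \I{2})$ established in Corollary~\ref{nFPPu}, by means of the two facts already collected just before the corollary: that UFP$(\uI)$ holds for every extension of $\il^-(\I{2})$, and that CIP$(\uI)$ together with UFP$(\uI)$ implies FPP$(\uI)$ for every such extension (as shown in \cite{IKO20}).

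The first step is a routine monotonicity observation that I would record explicitly: if $\ell \subseteq \ell'$ are logics closed under the rules of $\il^-$, and $F$ is a fixed point of a modalized $\mathcal{L}(\Box, \uI)$-formula $A(p)$ in $\ell$, then $\ell \vdash F \leftrightarrow A(F)$ lifts to $\ell' \vdash F \leftrightarrow A(F)$, so $F$ is also a fixed point in $\ell'$. Contrapositively, the failure of FPP$(\uI)$ in $\ell'$ forces its failure in every sublogic $\ell$ of $\ell'$. Applying this with $\ell' = \il^-(\uJ{15}, \I{2})$ and noting that $\il^-(\I{2}) \subseteq \il^-(\uJ{15}, \I{2})$ trivially and that $\il^-(\uJ{1}, \I{2}) \subseteq \il^-(\uJ{15}, \I{2})$ by Proposition~\ref{relation}.2, I conclude that FPP$(\uI)$ fails for each of the three logics in the statement.

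The second and final step is to combine this with UFP$(\uI)$. Each of $\il^-(\I{2})$, $\il^-(\uJ{1}, \I{2})$, and $\il^-(\uJ{15}, \I{2})$ is an extension of $\il^-(\I{2})$, so each satisfies UFP$(\uI)$ and the implication CIP$(\uI) \Rightarrow $ FPP$(\uI)$. Since FPP$(\uI)$ fails for each of the three by the preceding step, CIP$(\uI)$ must also fail for each of them.

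I do not anticipate a genuine obstacle in carrying this out; the two nontrivial ingredients (the construction from \cite{IKO20} showing that $\uI \lnot p$ has no fixed point in $\IL^-(\J{1}, \J{4}_{+}, \J{5})$, transferred across Theorem~\ref{MainTheorem}, and the CIP$+$UFP$\Rightarrow$FPP argument in the unary language) are already imported into the excerpt. The only care required is in the monotonicity step, where one must check that the rules of $\il^-$ are preserved when passing to extensions, which is immediate from the definition of ``logic containing $\il^-$'' fixed in Section~\ref{Sec:MThm}.
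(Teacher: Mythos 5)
Your proposal is correct and follows essentially the same route the paper intends: UFP$(\uI)$ holds for all extensions of $\il^-(\I{2})$, so CIP$(\uI)$ would imply FPP$(\uI)$ for each of the three logics, while FPP$(\uI)$ fails for all of them by Corollary~\ref{nFPPu} together with the (correct) observation that a fixed point in a sublogic remains a fixed point in any extension, so the failure propagates downward from $\il^-(\uJ{15},\I{2})$ to $\il^-(\I{2})$ and, via Proposition~\ref{relation}.2, to $\il^-(\uJ{1},\I{2})$. Your explicit monotonicity step is a faithful filling-in of what the paper leaves implicit.
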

We do not know whether CIP($\uI$) holds or not for several logics which are not extensions of $\il^-(\I{2})$ or  have FPP($\uI$). 

\begin{prob}
Does CIP$(\uI)$ hold for $\il^-$, $\il^-(\uJ{1})$, $\il^-(\uJ{15})$, and $\il^-(\I{2}, \I{3})$?
\end{prob}

\begin{table}[h]\label{Tab2}
\centering
\begin{tabular}[t]{|l|c|c|}
\hline
 & FPP & CIP\\
\hline
\hline
$\IL^-$ & $\times$~\cite{IKO20} & $\times$~\cite{IKO20}\\
\cline{1-3}
$\IL^-(\J{5})$ & $\times$~\cite{IKO20} & $\times$~\cite{IKO20}\\
\hline
$\IL^-(\J{1})$ & $\times$~\cite{IKO20} & $\times$~\cite{IKO20}\\
\hline
$\IL^-(\J{4}_+)$ & $\times$~\cite{IKO20} & $\times$~\cite{IKO20}\\
\cline{1-3}
$\IL^-(\J{4}_+, \J{5})$ & $\times$~\cite{IKO20} & $\times$~\cite{IKO20}\\
\cline{1-3}
$\IL^-(\J{2}_+)$ & $\times$~\cite{IKO20} & $\times$~\cite{IKO20}\\
\hline
$\IL^-(\J{1}, \J{5})$ & $\times$~\cite{IKO20} & $\times$~\cite{IKO20}\\
\hline
$\IL^-(\J{1}, \J{4}_+)$ & $\times$~\cite{IKO20} & $\times$~\cite{IKO20}\\
\cline{1-3}
$\CL$ & $\times$~\cite{IKO20} & $\times$~\cite{IKO20}\\
\hline
$\IL^-(\J{1}, \J{4}_+, \J{5})$ & $\times$~\cite{IKO20} & $\times$~\cite{IKO20}\\
\hline
$\IL^-(\J{2}_+, \J{5})$ & \checkmark~\cite{IKO20} & \checkmark~\cite{IKO20}\\
\hline
$\IL$ & \checkmark~\cite{DeJVis91} & \checkmark~\cite{AHD01}\\ 
\hline
\end{tabular}
\begin{tabular}[t]{|l|c|c|}
\hline
 & FPP($\uI$) & CIP($\uI$) \\
\hline
\hline
$\il^-$ &  $\times$ & ?\\
 & &\\
\hline
 $\il^{-}(\uJ{1})$ & $\times$ & ?\\
\hline
 $\il^{-}(\I{2})$ & $\times$ & $\times$\\
 &  & \\
 &  & \\
\hline
 $\il^{-}(\uJ{15})$ & $\times$ & ?\\
\hline
 $\il^{-}(\uJ{1}, \I{2})$ & $\times$ & $\times$\\
 &  & \\
\hline
$\il^{-}(\uJ{15}, \I{2})$ & $\times$ & $\times$\\
\hline
$\il^{-}(\I{2}, \I{3})$ & \checkmark  & ?\\
\hline
 $\il^-(\uJ{1}, \I{2}, \I{3})$  & \checkmark~\cite{deR92} & \checkmark~\cite{deR92} \\ 
\hline
\end{tabular}
\caption{FPP, CIP, FPP($\uI$), and CIP($\uI$)}
\end{table}

\section*{Acknowledgement}

The author would like to thankful to Taishi Kurahashi for his helpful and valuable comments.  
The research was supported by Foundation of Research Fellows, The Mathematical Society of Japan. 

\bibliographystyle{plain}
\bibliography{ref}

\end{document}